\numberwithin{equation}{section}
\newtheorem{theorem}{Theorem}
\newtheorem{lemma}{Lemma}
\newtheorem{corollary}{Corollary}
\theoremstyle{definition}
\newtheorem{definition}[theorem]{Definition}
\theoremstyle{remark}
\newtheorem{remark}{Remark}
\newcommand {\sgn}{\mbox{sgn}}
\begin{document}

\title[Polynomial asymptotics of the stochastic pantograph equation]
{Sufficient Conditions for Polynomial Asymptotic Behaviour of the Stochastic Pantograph Equation
}

\author{John A. D. Appleby}
\address{School of Mathematical
Sciences, Dublin City University, Glasnevin, Dublin 9, Ireland}
\email{john.appleby@dcu.ie} 

\author{Evelyn Buckwar}
\address{Johannes Kepler University, Institute for Stochastics, Altenbergerstrasse 69, 4040 Linz, Austria}\email{Email: Evelyn.Buckwar@jku.at}

\thanks{John Appleby gratefully acknowledges the support of a SQuaRE activity entitled ``Stochastic stabilisation of limit-cycle dynamics in ecology and neuroscience'' funded by the American Institute of Mathematics. 
} 
\subjclass{60H10, 34K20, 34K50.} 
\keywords{stochastic pantograph equation, asymptotic stability, stochastic delay differential equations, unbounded delay, polynomial asymptotic stability, decay rates..}
\date{1 July 2016}

\begin{abstract}
This paper studies the asymptotic growth and decay properties of
solutions of the stochastic pantograph equation with
multiplicative noise. We give sufficient conditions on the
parameters for solutions to grow at a polynomial rate in $p$-th
mean and in the almost sure sense. Under stronger conditions the
solutions decay to zero with a polynomial rate in $p$-th mean and
in the almost sure sense. When polynomial bounds cannot be achieved, we show for 
a different set of parameters that exponential growth bounds of solutions in $p$-th mean and an almost sure sense can be obtained. Analogous results are established for
pantograph equations with several delays, and for general finite
dimensional equations.
\end{abstract}

\maketitle

\section{Authors' note}
Much of the contents of the following paper was written in 2003, and a preprint of the work has been available online under this title since that date \cite{0}. To the best of the authors' knowledge in 2003, this work was the first concerning the asymptotic behaviour of stochastic equations with proportional (or indeed unbounded point) delay. Such stochastic proportional delay equations are often called stochastic pantograph equations. The paper did not find a ready home at the time, but subsequently has steadily attracted citations through its online preprint incarnation. These are included in the bibliography below \cite{3,4,1,6,13,8,7,5,9,2,15,10,16,12,11,14}. Since a number of other works quote \cite{0}, we feel it best that the paper be subject to formal review, and as the first author's introduction to the subject came through a paper in the EJQTDE~\cite{MakTer}, we felt it fitting, after a long (but not unbounded) delay, to submit a revised version of it here. 

In fact, asymptotic analysis of stochastic equations with unbounded delay of this type have been the subject of many works, as can be confirmed by investigation of citation databases. Results giving general asymptotic rates, rather than the polynomial behaviour recorded here, have been extended since 2003 to deal with uncertain neural networks, as for example can be seen in \cite{17}. 

We are also grateful for the support of American Institute of Mathematics to enable us to make the appropriate revisions and updates to the work.  

\section{Introduction}
In this paper we shall study the asymptotic behaviour of the
stochastic pantograph equation
\begin{subequations} \label{stochpanto}
\begin{eqnarray}
dX(t)  &= & \{aX(t)+bX(qt)\} \,dt \ + \ \{\sigma X(t)+ \rho
X(qt)\} \,dB(t), \ t \geq 0,
\label{pantogl} \\
X(0)  &= & X_0.   \label{pantoanfang}
\end{eqnarray}
\end{subequations}
We assume that $a, b,\sigma, \rho $ are real constants and $q \in
(0,1)$. The process $(B(t))_{t\geq 0}$ is a standard one
dimensional Wiener process, given on a filtered probability space
$(\Omega, \mathcal{F}, \mathbb{P}) $. The filtration is the
natural filtration of $B$. The initial value $X_0$ satisfies
$\mathbb{E}(|X_0|^2) <\infty$ and is independent of $B$.

We denote a solution of (\ref{stochpanto}), starting at $0$
and with initial condition $X_0$ by $(X(t;0,X_0))_{t\geq 0}$. By virtue of 
\cite{BakerBuck2} there exists a pathwise unique strong solution
$(X(t;0,X_0))_{t\geq 0}$ of (\ref{stochpanto}).

Equation (\ref{stochpanto}) is a generalisation of the
deterministic {\it pantograph equation}
\begin{equation} \label{detpanto}
x'(t) \ = \ {\bar a} x(t) \ + \ {\bar b} x(qt), \ t \geq 0, \quad
x(0)\ = \ x_0, \qquad q\in (0,1),
\end{equation}
in which it is conventional to take $x'(t)$ to denote the
right-hand derivative of $x$.

Since $qt < t$ when $t \geq 0$, equations (\ref{stochpanto}) and
(\ref{detpanto}) are differential equations {\it with time lag}.
The quantity $\tau(t) = t-qt$ in the delayed argument of
$x(t-\tau(t))$ will be called the (variable) lag. We note that the
argument $qt$ satisfies $qt \to \infty$ as $t \to\infty$ but the
lag is unbounded, that is $t-qt \to \infty$ as $t \to\infty$. In
the literature equations like (\ref{stochpanto}) and
(\ref{detpanto}) are also termed (stochastic) delay, retarded or
functional differential equations.

Equation (\ref{detpanto}) and its generalisations possess a wide
range of applications. Equation (\ref{detpanto}) arises, for
example, in the analysis of the dynamics of an overhead current
collection system for an electric locomotive or in the problem of
a one-dimensional wave motion, such as that due to small vertical
displacements of a stretched string under gravity, caused by an
applied force which moves along the string (\cite{Foxetal} and
\cite{OckTay}). Existence, uniqueness and asymptotic properties of
the solution of (\ref{detpanto}) and its generalisations have been
considered in \cite{CarrD,Iser,KatoMcL,MakTer}. Equation
(\ref{detpanto}) can be used as a paradigm for the construction of
numerical schemes for functional differential equations with
unbounded lag, {\it cf.} \cite{Brun}, \cite{Iser0}, \cite{Liu} (we
do not attempt to give a complete list of references here).

A wealth of literature now exists on the non-exponential (general)
rates of decay to equilibrium of solutions of differential and
functional differential equations, both for deterministic and
stochastic equations. Three types of equations which exhibit such
general (non-exponential) rates of decay have attracted much
attention. These are
\begin{itemize}
\item[(i)] Non-autonomous perturbations or forcing terms added to
linear or near-linear problems (such as quasi-linear, or
semi-linear equations).
\item[(ii)] Nonlinear equations (which have no linear, or linearisable
terms near equilibrium), giving rise to weak exponential
asymptotic stability.
\item[(iii)] Certain types of linear equations with unbounded delay.
\end{itemize}
In the deterministic theory, all three mechanisms have been
studied extensively. For stochastic differential equations, and
functional differential equations, several authors have obtained
results in categories (i), (ii), but comparatively few results
have been established in category (iii). We will briefly review
the literature on non-exponential stability of solutions of SDEs
and SFDEs in categories (i), (ii), and allude to the relevant
theory for deterministic problems in category (iii).

An important subclass of non-exponential behaviour is the
so-called polynomial asymptotic stability, where the rate of decay
is bounded by a polynomial with negative exponent,
in either a $p$-th mean or almost sure sense. This type of
stability has been studied in Mao~\cite{Mao2,Mao:92b}, Mao and
Liu~\cite{MaoLiu3}, for stochastic differential equations and
stochastic functional differential equations with bounded delay,
principally for problems of type (i). More general rates of decay
than polynomial are considered in these papers, but in most cases,
it is the non-exponential nature of non-autonomous perturbations,
that gives rise to the non-exponential decay rates of solutions.

The problem (ii) has been investigated for stochastic differential
equations in Zhang and Tsoi~\cite{ZhangTsoi96,ZhangTsoi97}, and
Liu~\cite{LiuK} with state--dependent noise and in Appleby and Mackey~\cite{appmack2003} and Appleby and Patterson~\cite{appleby_patterson} for state independent noise. In these papers, it is principally the nonlinear
form of the equation close to equilibrium that gives rise to the
slow decay of the solution to equilibrium, rather than
non-autonomous time-dependent terms (for deterministic functional
differential equations, results of this type can be found in
Krisztin~\cite{Kris}, and Haddock and Krisztin
~\cite{HaddKris84,HaddKris86}; general decay rates for
deterministic problems of types (i), (ii) are considered in
Caraballo~\cite{Cara}).

The third mechanism (iii) by which SFDEs can approach equilibrium
more slowly than exponentially has been less studied, and
motivates the material in this paper. To this end, we briefly
reprise the convergence properties of linear autonomous
deterministic functional differential equations with bounded delay
and unbounded delay. Bounded delay equations of this type must
converge to zero exponentially fast, if the equilibrium is
uniformly asymptotically stable. However, convergence to
equilibrium need not be at an exponential rate for equations with
{\it unbounded} delay. For example, for a linear convolution
Volterra integro-differential equation, Murakami showed
in~\cite{Mura} that the exponential asymptotic stability of the
zero solution requires a type of exponential decay criterion on
the kernel, Appleby and Reynolds~\cite{jadwr:2} have determined
exact sub-exponential decay rates on the solutions of Volterra
equations, while Kato and McLeod~\cite{KatoMcL} have shown that
solutions of the linear equation (\ref{detpanto}) can converge to
zero at a slower than exponential (polynomial) rate.

In contrast to categories (i), (ii) for SDEs and SFDEs, less is
known regarding the non-exponential asymptotic behaviour of linear
stochastic functional differential equations with unbounded delay,
although it has been shown in~\cite{A1,A2,AR3,ARie1,jadwrs1} that solutions of
linear convolution It\^{o}-Volterra equations can converge to
equilibrium at a non-exponential rate.

In this paper, we show that, in common with the deterministic
pantograph equation studied in~\cite{KatoMcL}, solutions of the
stochastic pantograph equation (\ref{stochpanto}) can be bounded
by polynomials in both a $p$-th mean and almost sure sense, and,
for values of the parameters $a$, $b$, $\sigma$, $\rho$, $q$, we
establish polynomial asymptotic stability in these senses.
Furthermore, it appears, in common with the deterministic
pantograph equation, that the polynomial asymptotic behaviour is
determined only by the values of the parameters associated with
the non-delay terms. We also observe, when the noise intensities
$\sigma$, $\rho$ are small, that the polynomial asymptotic
behaviour of the stochastic problem can be inferred from that of
the corresponding deterministic equation. Our analysis involves
obtaining estimates on the second mean of the solution of
(\ref{stochpanto}) using comparison principle arguments (as
in~\cite{BakerBuck2}), and then using these estimates to
obtain upper bounds on the solution in an almost sure sense, using
an idea of Mao~\cite{Maobk2}.

The article is organised as follows: in Section \ref{prelim} we
give the definitions of the asymptotic behaviour of solutions that
we want to discuss and we state the relevant properties of the
deterministic pantograph equation.

In Section \ref{asymbehav} sufficient conditions are given for
which the solution process is bounded asymptotically by
polynomials, in a first mean and a mean-square sense, as well as
in an almost sure sense. On a restriction of this parameter set,
we show that the equilibrium solution is asymptotically stable in
a $p$-th mean sense $(p = 1, 2)$ or almost surely, with the decay
rate bounded above by a polynomial.

In Section \ref{expupper} we consider unstable solutions of
(\ref{stochpanto}) and parameter regions in which the polynomial
boundedness of these solutions has not been established. We prove
that all such solutions are bounded by increasing exponentials in
the first mean and in mean square and almost surely.

In the penultimate section of the paper, we show that the analysis
of the scalar stochastic pantograph equation with one proportional
delay extends to equations with arbitrarily many proportional
delays, and also to finite dimensional analogues of
(\ref{stochpanto}). The final section discusses some related
problems; an Appendix contains several technical results.

\section{Preliminary results} \label{prelim}

We state the definitions for the asymptotic behaviour in this
section. We follow the definition given in Mao~\cite{Mao3,Mao2}.
\subsection{Definitions of asymptotic behaviour}
First we define the notions of asymptotic growth that we consider
in this article. Notice that 
\begin{equation} \label{eq.Xlininitcondn}
X(t;0,X_0)=X_0 X(t;0,1), \quad t\geq 0.
\end{equation}
Therefore, as $X_0$ is independent of $B$, bounds on the $p$--th moment of $X(t;0,X_0)$ will 
be linear in $\mathbb{E} ( |X_0|^p)$. This fact is reflected in the definition below. 
\begin{definition} \label{growthdef}
Let $(X(t;0,X_0))_{t \geq 0}$ be the unique solution of the SDDE
(\ref{stochpanto}) and $p>0$. We say that the solution is
\begin{subequations}
\begin{enumerate}
 \item {\em globally polynomially bounded in $p$-th mean}, if there exist
constants $C>0$, $\alpha_1\in\mathbb{R}$, such that
  \begin{equation} \label{stabil1}
    \mathbb{E}( |X(t;0,X_0)|^p) \leq C\  \mathbb{E} ( |X_0|^p)\ (1+ t)^{\alpha_1}, \quad t\geq 0;
   \end{equation}
 \item {\em almost surely globally polynomially bounded}, if there exists a
constant $\alpha_2 \in  \mathbb{R}$ such that
   \begin{equation} \label{stabil2}
   \limsup_{t \rightarrow \infty}
 \frac{\log|X(t;0,X_0)|}{\log t}\leq \alpha_2, \quad \mbox{almost
 surely};
    \end{equation}
\item {\em globally exponentially bounded in $p$-th mean}, if there exist
constants $C>0$, $\alpha_3\in\mathbb{R}$, such that
  \begin{equation} \label{stabil1exp}
    \mathbb{E} (|X(t;0,X_0)|^p) \leq C\  \mathbb{E} ( |X_0|^p)\ e^{\alpha_3 t}, \quad t\geq 0;
   \end{equation}
 \item {\em almost surely globally exponentially bounded}, if there exists a
constant $\alpha_4 \in  \mathbb{R}$ such that
   \begin{equation} \label{stabil2exp}
   \limsup_{t \rightarrow \infty}
 \frac{\log|X(t;0,X_0)|}{ t}\leq \alpha_4, \quad \mbox{almost
 surely};
    \end{equation}
\end{enumerate}
\end{subequations}
\end{definition}
In the case that (1) holds for some $\alpha_1\geq 0$, we have that (3) holds for some $\alpha_3\geq 0$, and if (2) holds for some $\alpha_2\geq 0$, then (4) holds for some $\alpha_4\geq 0$.  Also, if there is $\alpha_3\leq 0$ such that (3) holds, then 
there is $\alpha_1\leq 0$ such that (1) holds, and the existence of an $\alpha_4\leq 0$ such that (4) holds implies the existence of an $\alpha_2\leq 0$ such that (2) holds. 

We stop to justify and clarify some aspects of our definition. 

First, we note that the constants $\alpha_{i}$ for $i=1,\ldots,4$ in each of (1)--(4) are independent of $X_0$ by dint of \eqref{eq.Xlininitcondn}. 

Second, although we choose to consider bounds in \eqref{stabil1} and \eqref{stabil1exp} for all $t\geq 0$, it is equivalent to start with a definition on which such bounds
hold for $t\geq T$, for some $T>0$.  

To prove this second remark, we concentrate on the polynomial case, noting that the situation is similar in the exponential one. Clearly, if we take as our starting point in lieu of \eqref{stabil1} a polynomial estimate that holds for all $t\geq T$ (where $T$ is sufficiently large), we may assume  
\[
\mathbb{E}[|X(t;0,1)|^p] \leq C'(1+t)^\alpha, \quad t\geq T,
\] 
where $C'>0$ and $\alpha$ are independent of $X_0$. For $t\leq T$ we have 
\[
\mathbb{E}[|X(t;0,1)|^p]\leq \sup_{t\in [0,T]}\mathbb{E}(|X(t;0,1)|^p). 
\]
Now define  
\[
C=\max\left(C',\frac{\sup_{t\in [0,T]}\mathbb{E}(|X(t;0,1)|^p)}{\min(1,(1+T)^\alpha)}\right).
\] 
Again, we see that $C$ is independent of $X_0$. For 
$t\leq T$ we have $\mathbb{E}[|X(t;0,X_0)|^p]=\mathbb{E}[|X_0|^p] \mathbb{E}(|X(t;0,1)|^p)$ and so
\begin{align*}
\mathbb{E}[|X(t;0,X_0)|^p]&\leq 
\mathbb{E}[|X_0|^p](1+t)^\alpha \cdot \frac{\sup_{t\in [0,T]}\mathbb{E}(|X(t;0,1)|^p)}{(1+t)^\alpha} \\
&\leq \mathbb{E}[|X_0|^p](1+t)^\alpha \cdot \frac{\sup_{t\in [0,T]}\mathbb{E}(|X(t;0,1)|^p)}{\min((1+T)^\alpha,1)} \\
&\leq C\mathbb{E}[|X_0|^p](1+t)^\alpha. 
\end{align*}
On the other hand, for $t\leq T$ we have 
\[
\mathbb{E}[|X(t;0,X_0)|^p] = \mathbb{E}[|X_0|^p] \mathbb{E}(|X(t;0,1)|^p) \leq C'\mathbb{E}[|X_0|^p](1+t)^\alpha\leq C\mathbb{E}[|X_0|^p](1+t)^\alpha. 
\]
Hence the estimate $\mathbb{E}[|X(t;0,X_0)|^p] \leq  C\mathbb{E}[|X_0|^p](1+t)^\alpha$ holds for all $t\geq 0$, and this legitimises 
our definition in \eqref{stabil1}.  

We are concerned with the following notions of asymptotic stability.
\begin{definition} \label{stabildef} The equilibrium solution of the SDDE (\ref{stochpanto}) is
said to be
\begin{subequations}
\begin{enumerate}
  \item {\em globally polynomially stable in $p$-th mean}, if (\ref{stabil1}) holds with
 $\alpha_1<0$;
\item {\em almost surely globally polynomially stable}, if (\ref{stabil2})
holds with $\alpha_2<0$.
\end{enumerate}
\end{subequations}
\end{definition}
\begin{remark}
If, for every $\epsilon>0$, the solution of the SDDE
(\ref{stochpanto}) satisfies
 \begin{equation} \label{stabil2a}
   \limsup_{t \rightarrow \infty}
 \frac{|X(t;0,X_0)|}{t^{\alpha_2+ \epsilon}}=0, \quad \mbox{almost
 surely},
    \end{equation}
then the solution is almost surely globally polynomially bounded
with constant $\alpha_2$, i.e. (\ref{stabil2}) holds. Moreover, \eqref{stabil2a} implies \eqref{stabil2}. 
The term {\em global} in the above refers to the fact that we do not require a
restriction on the initial value.
\end{remark}

\subsection{Results for the deterministic pantograph
equation}  First, we provide a comparison result for solutions of
pantograph equations. We mention that stochastic comparison
arguments are used in the study of stochastic delay differential
equations with discrete time lag and their Euler-Maruyama
approximations \cite{BakerBuck3}, and for It\^{o}-Volterra
equations~\cite{1}.

Recall that, for a continuous real-valued function $f$ of a
real variable, the Dini-derivative $D^+ f$ is defined as
\[
D^+ f(t) = \limsup_{\delta \downarrow 0}
\frac{f(t+\delta)-f(t)}{\delta}.
\]
\begin{lemma}  \label{lemma:lemp1}
Let ${\bar b}>0$, $q\in(0,1)$. Assume $x$ satisfies
\begin{equation}   \label{detpantoeps}
x'(t) = {\bar a} x(t) \ + \ {\bar b}x(qt), \ t \geq 0,
\end{equation}
where $x(0)>0$ and suppose $t\mapsto p(t)$ is a continuous
non-negative function defined on $\mathbb{R}^{+}$ satisfying
\begin{equation}    \label{eq:eqg1}
D^+ p(t)\leq {\bar a} p(t) + {\bar b} p(qt), \quad t\geq 0
\end{equation}
with $0<p(0)<x(0)$. Then $p(t)\leq x(t)$ for all $t \geq 0$.
\end{lemma}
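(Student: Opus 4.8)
The plan is to set $z(t) = x(t) - p(t)$ and to prove $z(t) \ge 0$ for all $t \ge 0$; since $z(0) = x(0) - p(0) > 0$, I would in fact aim for the strict bound $z(t) > 0$. The first step is to collapse the two hypotheses into a single differential inequality for $z$. Because $x$ is differentiable while $p$ carries only a Dini derivative, I cannot subtract the rates naively. Writing $D_{+}$ for the lower right Dini derivative, the convergence of the difference quotient of $x$ gives $D^+ z(t) = x'(t) - D_{+} p(t) \ge x'(t) - D^+ p(t)$. Combining $x'(t) = \bar a x(t) + \bar b x(qt)$ from \eqref{detpantoeps} with the hypothesis \eqref{eq:eqg1} then yields
\[
D^+ z(t) \;\ge\; \bar a\, z(t) + \bar b\, z(qt), \qquad t \ge 0.
\]

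Next I would remove the undelayed term by the integrating factor $w(t) = e^{-\bar a t} z(t)$. Since $e^{-\bar a t}$ is positive and differentiable, the product rule for Dini derivatives gives $D^+ w(t) = e^{-\bar a t}\bigl(D^+ z(t) - \bar a z(t)\bigr) \ge \bar b\, e^{-\bar a t} z(qt)$, with $w(0) = z(0) > 0$. The sign hypothesis $\bar b > 0$ enters precisely here: it guarantees that $D^+ w(t) \ge 0$ at every $t$ for which $z(qt) \ge 0$.

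The heart of the argument then exploits the proportional nature of the delay. I would set $t^{\ast} = \sup\{T \ge 0 : z > 0 \text{ on } [0,T]\}$ and suppose, for contradiction, that $t^{\ast} < \infty$. Continuity of $z$ together with $z(0) > 0$ forces $z > 0$ on $[0, t^{\ast})$ and $z(t^{\ast}) = 0$. The crucial observation is that for every $s \in [0, t^{\ast}]$ one has $qs \le q\, t^{\ast} < t^{\ast}$, so $qs \in [0, t^{\ast})$ and hence $z(qs) > 0$. Consequently $D^+ w(s) \ge \bar b\, e^{-\bar a s} z(qs) > 0$ on $[0, t^{\ast})$. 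Invoking the standard fact that a continuous function with nonnegative right Dini derivative on an interval is nondecreasing there, I conclude $w(t^{\ast}) \ge w(0) > 0$, whence $z(t^{\ast}) = e^{\bar a t^{\ast}} w(t^{\ast}) > 0$, contradicting $z(t^{\ast}) = 0$. Thus $t^{\ast} = \infty$, so $z(t) > 0$ for all $t \ge 0$ and in particular $p(t) \le x(t)$.

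The main obstacle is the careful bookkeeping of the Dini derivatives at two points: justifying the differential inequality for the difference $z$ when $p$ is not differentiable, and the monotonicity lemma applied to $w$. It is worth stressing that the customary device of perturbing by $\varepsilon e^{\lambda t}$ to force a strict inequality is \emph{not} needed here: the strict ordering $q\, t^{\ast} < t^{\ast}$ intrinsic to the proportional delay already makes $z(qs)$ strictly positive at the critical time, so $D^+ w$ is strictly positive and the contradiction closes without any limiting argument.
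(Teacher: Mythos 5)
Your proof is correct, and although it shares the paper's overall strategy --- argue by contradiction at the first time $t^{\ast}$ where $p$ catches up with $x$, and exploit the strict ordering $q t^{\ast} < t^{\ast}$ so that the delayed term is still strictly ordered there --- the technical execution is genuinely different. The paper works pointwise at the first crossing time $t_1$ with \emph{left} Dini derivatives: it deduces $D_- p(t_1) \geq x'(t_1)$ from the ordering on $[0,t_1)$, and invokes the Lakshmikantham--Leela transfer result (Lemma~\ref{lemma:llakleela}) to convert the hypothesis on $D^+ p$ into a bound on $D_- p(t_1)$, after which the contradiction $0 \leq \bar b\bigl(p(qt_1)-x(qt_1)\bigr) < 0$ falls out. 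You instead form the difference $z = x - p$, derive $D^+ z \geq \bar a z + \bar b z(q\cdot)$ (correctly noting that only $x$ is differentiable, so the difference quotient argument must pass through the lower Dini derivative of $p$), strip the undelayed term with the integrating factor $e^{-\bar a t}$, and apply the monotonicity principle for continuous functions with positive right Dini derivative on $[0,t^{\ast})$ to force $w(t^{\ast}) \geq w(0) > 0$. What your route buys is that it stays entirely with right Dini derivatives and avoids the Lakshmikantham--Leela lemma, whose conclusion holds only off an exceptional null set and whose application at the single point $t_1$ therefore needs some care in the paper's version; the price is that you must justify the Dini product rule and the monotonicity lemma, both of which are standard (and here $D^+ w$ is strictly positive, so the elementary form of the monotonicity result suffices, as you observe in dispensing with the usual $\varepsilon$-perturbation).
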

\begin{proof} See the Appendix.
\end{proof}
The following result concerning asymptotic properties of the
deterministic pantograph equation has been proved in
\cite{KatoMcL}. The first half of the result (given in
\cite[Section 4, Theorem 3]{KatoMcL}) will be of utility in obtaining
polynomial upper bounds on the $p^{th}$-mean of the process; and
those bounds are in turn used to obtain estimates on the almost
sure asymptotic behaviour of the solution of (\ref{stochpanto}).
The second part will be employed to establish exponential upper
bounds on the solutions of (\ref{stochpanto}); it can be found in
\cite[Section 5, Theorem 5]{KatoMcL}. The case when $\bar{a}=0$ is covered in 
\cite[Section 6, Theorem 7]{KatoMcL}; we have slightly modified notation for convenience. We recall for the third part that for 
$z\in\mathbb{C}$ that the principal value of the logarithm is defined by 
\[
\text{Log}(z):=\log|z|+i\text{Arg}(z)
\]
where $\text{Arg}(z)\in (-\pi,\pi]$ is the principal value of the argument of $z$. 
In particular, for $x<0$, $\text{Log}(x)=\log|x|+i\pi$.
\begin{lemma}    \label{lemma:lemp2}
Let $x$ be the solution of (\ref{detpanto}).
\begin{itemize}
\item[(i)]
If ${\bar a}<0$, there exists $C_1>0$ such that
\[
\limsup_{t \rightarrow \infty} \frac{|x(t)|}{t^{\gamma}}=C_1|x(0)|
\]
where $\gamma\in\mathbb{R}$ obeys
\begin{equation}   \label{eq:eqg3}
0={\bar a} +|{\bar b}| q^{\gamma}.
\end{equation}
Therefore, for some $C>0$, we have
\begin{equation}   \label{eq:eqg4}
|x(t)| \leq C |x(0)| (1+ t)^{\gamma}, \quad t\geq 0.
\end{equation}
\item[(ii)]
If ${\bar a}>0$, there exists $C>0$ such that
\begin{equation}   \label{eq:eqg4pr}
|x(t)|\leq C|x(0)| e^{{\bar a}t}, \quad t\geq 0.
\end{equation}
\item[(iii)] If $\bar{a}=0$, define $c' = \textup{Log}(1/q) > 0$ and set
\[
\psi(t):= t^k (\textup{Log}(t))^h
 \exp\left(\frac{1}{2c'}\left\{\textup{Log}(t) - \textup{Log} \textup{Log}(t)\right\}^2\right),
\]
where
\[
k = \frac{1}{2} +\frac{1}{c'} + \frac{1}{c'}\textup{Log}(\bar{b}c'), 
\quad 
h = - 1 - \frac{1}{c'}\textup{Log}(\bar{b}c').
\]
Then $x(t)=\mathcal{O}(\psi(t))$ as $t\to \infty$, and if $x(0)\neq 0$,
then $x(t)$ is not $o(\psi(t))$ as $t\to \infty$. 
\end{itemize}
\end{lemma}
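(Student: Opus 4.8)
Since Lemma~\ref{lemma:lemp2} is quoted from \cite{KatoMcL}, the intended justification is a citation; nonetheless, here is how I would establish it from scratch. The unifying devices are the variation-of-constants representation
\[
x(t) = e^{\bar a t}x(0) + \bar b\int_0^t e^{\bar a(t-s)}x(qs)\,ds,
\]
and the entire power-series solution obtained by matching coefficients in $x(t)=\sum_{n\ge 0}c_n t^n$, which gives the recursion $c_{n+1}=(\bar a+\bar b q^n)c_n/(n+1)$, reducing to $c_{n+1}=\bar b q^n c_n/(n+1)$ when $\bar a=0$. The three regimes must be separated, since the balance between the instantaneous term $\bar a x(t)$ and the dilated term $\bar b x(qt)$ differs in each, and I would convert the resulting pointwise differential inequalities into envelope bounds via the comparison principle of Lemma~\ref{lemma:lemp1}.

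\emph{Part (ii)} is the easiest. Setting $u(t)=e^{-\bar a t}|x(t)|$ in the integral equation yields
\[
u(t)\le |x(0)| + |\bar b|\int_0^t e^{-\bar a(1-q)s}u(qs)\,ds,
\]
and since $\bar a>0$ and $q\in(0,1)$ the kernel $e^{-\bar a(1-q)s}$ is integrable on $[0,\infty)$. A Gronwall-type iteration, exploiting that the delayed argument $qs$ is strictly smaller than $s$, then bounds $u$ and produces the sharp exponent $\bar a$ in \eqref{eq:eqg4pr}; alternatively one checks that $p(t)=Ce^{\beta t}$ is a supersolution for suitable $\beta$ and sharpens the exponent afterwards.

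\emph{Part (i)} I would begin with the dominant-balance ansatz $x(t)\sim C t^\gamma$: since $x'(t)=O(t^{\gamma-1})$ is of lower order, the leading $t^\gamma$ terms on the right must cancel, forcing the indicial relation $\bar a+\bar b q^\gamma=0$. When $\bar b>0$ this gives the real root of \eqref{eq:eqg3}; when $\bar b<0$ the root is complex, and its real part again satisfies $\bar a+|\bar b|q^\gamma=0$, which explains both the absolute value and the fact that only a $\limsup$ survives, as the solution then oscillates in sign. The upper bound \eqref{eq:eqg4} follows by verifying that $p(t)=C(1+t)^\gamma$ is a supersolution—here the subleading $O(t^{\gamma-1})$ terms must be correctly signed—and invoking Lemma~\ref{lemma:lemp1}. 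Obtaining the exact constant $C_1$ in the $\limsup$, and treating the oscillatory case $\bar b<0$ where positivity and hence direct comparison fail, is harder; for this I would pass to logarithmic time and recast the equation as a renewal-type problem, or use a Mellin/Dirichlet-series representation of the solution.

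\emph{Part (iii)}, the critical case $\bar a=0$, is where the real work lies. The recursion gives the explicit entire solution
\[
x(t)=c_0\sum_{n\ge 0}\frac{\bar b^{\,n}q^{n(n-1)/2}}{n!}\,t^n,
\]
which is of order zero because of the Gaussian super-decay $q^{n^2/2}$ of the coefficients, so its growth along the positive axis is governed by its maximum term. Writing $c'=\textup{Log}(1/q)$ and maximising $\log\bigl(|\bar b|^n q^{n(n-1)/2}t^n/n!\bigr)$ over $n$ by Stirling, the optimal index is $n^\ast\approx(\log t)/c'$, and a second-order (saddle-point) expansion about $n^\ast$ produces exactly the factor $\exp\bigl(\tfrac{1}{2c'}\{\textup{Log}(t)-\textup{Log}\,\textup{Log}(t)\}^2\bigr)$ together with the algebraic corrections $t^k(\textup{Log}(t))^h$ with the stated $k,h$; the $\mathcal{O}(\psi)$ bound then follows once one controls the number of terms contributing near the maximum. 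The genuine obstacle is the lower bound—that $x(t)$ is not $o(\psi(t))$—because when $\bar b<0$ the series is sign-alternating and one must rule out catastrophic cancellation. I expect to handle this by maximum-modulus (Phragm\'en--Lindel\"of) estimates for this order-zero entire function, or by exhibiting a sequence $t_j\to\infty$ along which the maximum term dominates the tail, and this is the most delicate part of the whole lemma.
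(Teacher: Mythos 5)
The paper supplies no proof of Lemma~\ref{lemma:lemp2}: it is quoted directly from Kato and McLeod \cite{KatoMcL} (Section 4, Theorem 3; Section 5, Theorem 5; Section 6, Theorem 7), which is exactly the justification you lead with. Your supplementary sketch is a faithful outline of how those results are actually proved (characteristic/indicial equation for $\bar a<0$, Gronwall iteration on $e^{-\bar a t}x(t)$ for $\bar a>0$, maximum-term/saddle-point analysis of the explicit entire series for $\bar a=0$), and you correctly flag the genuinely delicate points (the exact constant $C_1$, oscillation when $\bar b<0$, and the non-$o(\psi)$ lower bound), so nothing further is required here.
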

We can consider $x(0)\neq 0$ in Lemma~\ref{lemma:lemp2}, because if $x(0)=0$, then 
$x(t)=0$ for all $t\geq 0$ and all estimates follow trivially.

The constants $C$ and $C_1$ in Lemma~\ref{lemma:lemp2} are independent of $x(0)$, and indeed the estimates \eqref{eq:eqg3}, 
\eqref{eq:eqg4} and \eqref{eq:eqg4pr} hold for all $t\geq 0$, rather than merely for sufficiently large $t$, as might readily be supposed. This is because the function $x_1$, which is the unique continuous solution of  \eqref{detpanto} with initial condition 
$x_1(0)=1$, can be used to express the solution of \eqref{detpanto} with general initial condition $x(0)\neq 0$. 
Indeed, we have that $x(t)=x(0)x_1(t)$ for all $t\geq 0$. Therefore, applying part (i) of Lemma~\ref{lemma:lemp2} to $x_1$, we see that there exists a constant $C_1=C_1(\bar{a},\bar{b},q)>0$
\[
\limsup_{t \rightarrow \infty} \frac{|x_1(t)|}{t^{\gamma}}=C_1
\]
where $\gamma$ obeys \eqref{eq:eqg3}. Therefore, there must also exist a constant $C=C(\bar{a},\bar{b},q)$ such that 
$|x_1(t)| \leq C_1 (1+t)^{\gamma}$ for $t>0$ from which \eqref{eq:eqg4} immediately follows. An analogous argument applies to part (ii).

We do not discuss in this paper the situation when $\bar{a}=0$ which is covered by part (iii). It can be seen from part (iii) of Lemma \ref{lemma:lemp2} that 
\[
\log|\psi(t)|\sim \frac{1}{2}\frac{1}{\log(1/q)} \log^2(t), \quad \text{as $t\to\infty$}
\] 
so therefore as $x=\mathcal{O}(\psi)$ and $x$ is not $o(\psi)$ we have
\[
\limsup_{t\to\infty} \frac{\log|x(t)|}{ \log^2(t)}=\frac{1}{2}\frac{1}{\log(1/q)}.
\]
Therefore, $x$ enjoys an upper bound which is neither polynomial nor exponential, so we would not expect such bounds to transfer to the corresponding stochastic equation. For this reason such problems are beyond the immediate scope of the paper. In the case $a=0$, $b>0$, $\rho=0$, however, we can use the methods herein to show that $|X(t)|>0$ 
for all $t\geq 0$, and that $m(t)=\mathbb{E}[|X(t)|]/\mathbb{E}[|X_0|$ solves the 
differential equation $m'(t)=bm(qt)$ for $t\geq 0$ with $m(0)=1$. Then the estimate in part (iii) of Lemma~\ref{lemma:lemp2} can be applied to $m$ and thence to $t\mapsto \mathbb{E}[|X(t)|]$. The pure delay stochastic pantograph equation (i.e., with $a=\sigma=0$) 
\[
dX(t)=bX(qt)\,dt + \rho X(qt)\,dB(t)
\]  
is more likely to inherit the type of asymptotic behaviour in part (iii) of Lemma~\ref{lemma:lemp2}, but such a conjecture is the subject of further investigation. 

Now, taking as given that the case $\bar{a}=0$ is excluded henceforth from our discussions, we may combine the results of the previous two lemmas to obtain
the following explicit estimates on the asymptotic behaviour of
continuous functions obeying inequality (\ref{eq:eqg1}).
\begin{lemma}    \label{lemma:lemp3}
Suppose $p(0)>0$ and $t\mapsto p(t)$ is a continuous non-negative
function defined on $\mathbb{R}^{+}$ satisfying (\ref{eq:eqg1})
where ${\bar b}>0$ and $q\in(0,1)$.
\begin{itemize}
\item[(i)] If $\bar{a}<0$, there exists $C>0$ such that
\begin{equation}   \label{eq:eqcompp1}
p(t) \leq C p(0)(1+ t)^{\gamma}, \quad t\geq 0
\end{equation}
where $\gamma$ obeys (\ref{eq:eqg3}).
\item[(ii)]
If $\bar{a}>0$, there exists $C>0$ such that
\begin{equation}     \label{eq:eqcompp2}
p(t) \leq C p(0) e^{\bar{a}t}, \quad t\geq 0.
\end{equation}
\end{itemize}
\end{lemma}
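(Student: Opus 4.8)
The plan is to combine the comparison result of Lemma~\ref{lemma:lemp1} with the asymptotic bounds of Lemma~\ref{lemma:lemp2}. The key observation is that Lemma~\ref{lemma:lemp2} gives explicit polynomial (when $\bar a<0$) and exponential (when $\bar a>0$) upper bounds on the \emph{solution} $x$ of the \emph{equality} \eqref{detpanto}, while the hypothesis of Lemma~\ref{lemma:lemp3} gives only a differential \emph{inequality} \eqref{eq:eqg1} for $p$. So the strategy is to dominate the subsolution $p$ by a genuine solution $x$ of the pantograph equation and then transfer the bound on $x$ to $p$.

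\begin{proof}
Let $x$ denote the unique continuous solution of the pantograph equation \eqref{detpanto} with $\bar b>0$, $q\in(0,1)$, and with a chosen initial condition $x(0)$ satisfying $0<p(0)<x(0)$; for definiteness we may take $x(0)=2p(0)>0$. By Lemma~\ref{lemma:lemp1}, since $p$ is continuous, non-negative, obeys the Dini-derivative inequality \eqref{eq:eqg1} with the same constants $\bar a,\bar b,q$, and $0<p(0)<x(0)$, we conclude that
\[
p(t)\leq x(t), \quad t\geq 0.
\]

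We now split into the two cases. In case (i), $\bar a<0$, so part (i) of Lemma~\ref{lemma:lemp2} applies: there exists a constant $\tilde C>0$, independent of $x(0)$, such that
\[
x(t)\leq \tilde C\,|x(0)|\,(1+t)^{\gamma}, \quad t\geq 0,
\]
where $\gamma$ obeys \eqref{eq:eqg3}, namely $0=\bar a+|\bar b|q^{\gamma}=\bar a+\bar b q^{\gamma}$ (using $\bar b>0$). Combining this with the comparison inequality and recalling $x(0)=2p(0)$ gives
\[
p(t)\leq x(t)\leq \tilde C\,(2p(0))\,(1+t)^{\gamma} = (2\tilde C)\,p(0)\,(1+t)^{\gamma},
\]
so \eqref{eq:eqcompp1} holds with $C=2\tilde C$. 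In case (ii), $\bar a>0$, we instead invoke part (ii) of Lemma~\ref{lemma:lemp2}, which yields a constant $\tilde C>0$ with $x(t)\leq \tilde C\,|x(0)|\,e^{\bar a t}$ for $t\geq 0$; the same substitution $x(0)=2p(0)$ produces \eqref{eq:eqcompp2} with $C=2\tilde C$.
\end{proof}

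The argument is essentially routine once the two lemmas are in hand; the only point requiring a little care is the bookkeeping of constants, ensuring that the constant $C$ produced depends only on $\bar a,\bar b,q$ and is linear in $p(0)$, not on the particular choice of majorising initial value $x(0)$. This is guaranteed by the remark following Lemma~\ref{lemma:lemp2} that the constants there are independent of $x(0)$, so the main obstacle---if any---is simply verifying that the hypotheses of Lemma~\ref{lemma:lemp1} (continuity and non-negativity of $p$, strict inequality of initial data) are met, which they are by assumption.
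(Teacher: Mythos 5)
Your proof is correct and follows essentially the same route as the paper's: dominate $p$ by a solution $x$ of \eqref{detpanto} with slightly larger initial value via Lemma~\ref{lemma:lemp1}, then apply Lemma~\ref{lemma:lemp2} to $x$ and absorb the ratio $x(0)/p(0)$ into the constant (the paper uses $x(0)=(1+\varepsilon)p(0)$ where you use $2p(0)$, an immaterial difference).
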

\begin{proof}
We establish part (i) only; the proof of part (ii) follows
similarly. Let $\varepsilon>0$ and $x$ be a solution of
(\ref{detpanto}) with $x(0)=(1+\varepsilon)p(0)>p(0)$. By
Lemma~\ref{lemma:lemp1}, $p(t)\leq x(t)$. By
Lemma~\ref{lemma:lemp2} (i), there is $C'>0$ such that $x(t)\leq
C'x(0)t^{\gamma}$, for $t\geq 0$ where $\gamma$ is given by
(\ref{eq:eqg3}). But then
\[
p(t)\leq x(t)\leq C'x(0)t^{\gamma} = Cp(0)(1+t)^{\gamma}
\]
where $C=C'(1+\varepsilon)$.
\end{proof}

\section{Polynomial asymptotic behaviour} \label{asymbehav}

In this section, we concentrate on giving sufficient conditions
under which the asymptotics of the process satisfying
(\ref{stochpanto}) are polynomially bounded or stable, in both a
$p$-th mean $(p = 1, 2)$ and almost sure sense. The proofs of the
$p$-th mean polynomial asymptotic behaviour follow essentially by
taking expectations across the process $|X(t)|^{p}$, whose
semimartingale decomposition is known by It\^{o}'s rule. This
yields a functional differential inequality whose solution can be
majorised by the solution of a deterministic pantograph equation.

The determination of almost sure polynomial boundedness or
stability of the solution of the stochastic pantograph equation
uses the ideas of Theorem 4.3.1 in Mao, \cite{Maobk2}. There the
idea is to determine a.s. exponential asymptotic stability of
solutions of stochastic functional differential equations with
bounded delay, once a decreasing exponential upper bound is
obtained for a $p$-th moment. These ideas have been modified to
obtain results on almost sure asymptotic stability of
It\^{o}-Volterra equations, using the $p$-th mean integrability of
the solution \cite{JApas}.

\subsection{Polynomial growth bounds in the first and second mean}
In Theorem \ref{theorem:pg1} we give sufficient conditions on the
parameters $a,b, \sigma$ and $\rho$, such that solutions of
(\ref{stochpanto}) are polynomially bounded in the first mean and
in mean-square. We treat the cases $\rho =0$  and $\rho \neq 0$
separately, because in the first case we can obtain a larger set
of parameter values for which the $p$-th moment of the solution is
bounded by a polynomial, for $p\leq 1$. Another benefit of a
separate analysis for the first mean in the case $\rho=0$, is that
we obtain a larger parameter region for which the process is a.s.
polynomial bounded, and a.s. polynomial stable, than if we
initially obtain a bound on the second mean.

We consider polynomial stability of the solutions of
(\ref{stochpanto}) in Theorem \ref{theorem:pg2}.

\begin{theorem}   \label{theorem:pg1}
Let $(X(t))_{t \geq 0}$ be the unique process satisfying
(\ref{stochpanto}).
\begin{itemize}
\item[(i)]
Let $\rho=0$, and $\mathbb{E}[|X_0|^2]<\infty$. 
If $a<0$, there exists a real constant $\alpha$, and
a positive constant $C$ such that
\[
\mathbb{E}( |X(t)| ) \leq C\ \mathbb{E} ( |X_0|)\ (1+ t)^{\alpha}
\qquad \mbox{for} \quad t \geq 0,
\]
where $\alpha$ is given by
\begin{equation}     \label{eq:eqgt1}
\alpha=\frac{1}{\log q}\log\left(\frac{-a}{|b|}\right).
\end{equation}
\item[(ii)]
Let $\rho\not=0$, and $\mathbb{E}[|X_0|^4]<\infty$. 
If $\ 2a+\sigma^{2}<0$, there exists a real
constant $\alpha$, and a positive constant $C$ such that
\[
\mathbb{E}(|X(t)|^{2})\leq C \  \mathbb{E} ( |X_0|^2)\ (1+ t)^{\alpha}
\qquad \mbox{for} \quad t \geq 0,
\]
where $\alpha$ is given by
\begin{equation}    \label{eq:eqgt2pr}
\alpha= \frac{2}{\log q} \log \left( \frac{1}{\rho^{2}} \left(
\sqrt{|b+\sigma\rho|^2-\rho^2(2a+\sigma^2)}-|b+\sigma\rho| \right)
\right).
\end{equation}
\end{itemize}
\end{theorem}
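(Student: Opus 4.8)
The plan is to reduce both parts to the deterministic comparison result of Lemma~\ref{lemma:lemp3}(i), by deriving for $p(t):=\mathbb{E}(|X(t)|^p)$ with $p=1$ (part (i)) and $p=2$ (part (ii)) a functional differential inequality of the form $D^+ p(t)\leq \bar a\, p(t)+\bar b\, p(qt)$ with $\bar a<0$ and $\bar b>0$. Once such an inequality is in hand, Lemma~\ref{lemma:lemp3}(i) at once yields $p(t)\leq C\,p(0)(1+t)^{\gamma}$, where $\gamma$ solves $0=\bar a+\bar b q^{\gamma}$, i.e. $\gamma=\log(-\bar a/\bar b)/\log q$; the whole task is thus to identify the correct constants $\bar a,\bar b$ and, in part (ii), to optimise them. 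Throughout I may assume $p(0)=\mathbb{E}(|X_0|^p)>0$, since otherwise $X\equiv 0$ and the bound is trivial, and I note $\bar b>0$ in each case (in (i) since $b\neq0$ is implicit in \eqref{eq:eqgt1}, in (ii) automatically since $\rho\neq0$).

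For part (i) I would apply It\^o's rule to a smooth approximation $\phi_\varepsilon(x)=\sqrt{x^2+\varepsilon^2}$ of $|x|$, to circumvent the non-differentiability of $|\cdot|$ at the origin, obtaining in the limit the Tanaka-type identity
\[
d|X(t)| = \sgn(X(t))\{aX(t)+bX(qt)\}\,dt + \sgn(X(t))\,\sigma X(t)\,dB(t),
\]
in which the local-time term at $0$ is absent because the diffusion coefficient $\sigma X$ itself vanishes at $X=0$. Using $\sgn(X)\,aX=a|X|$ and $\sgn(X)\,bX(qt)\leq |b|\,|X(qt)|$, taking expectations (the stochastic integral being a genuine martingale thanks to $\mathbb{E}(|X_0|^2)<\infty$, which secures the needed second-moment integrability), and passing to the difference quotient gives $D^+\mathbb{E}(|X(t)|)\leq a\,\mathbb{E}(|X(t)|)+|b|\,\mathbb{E}(|X(qt)|)$. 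This is \eqref{eq:eqg1} with $\bar a=a<0$ and $\bar b=|b|>0$, so Lemma~\ref{lemma:lemp3}(i) delivers the bound with $\gamma=\log(-a/|b|)/\log q$, which is precisely $\alpha$ in \eqref{eq:eqgt1}.

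For part (ii) I would instead apply It\^o's rule directly to $X^2$ and take expectations (now using $\mathbb{E}(|X_0|^4)<\infty$ to guarantee the martingale property of $\int 2X(\sigma X+\rho X(qt))\,dB$), which produces
\[
\frac{d}{dt}\mathbb{E}(X(t)^2) = (2a+\sigma^2)\mathbb{E}(X(t)^2) + 2(b+\sigma\rho)\,\mathbb{E}(X(t)X(qt)) + \rho^2\,\mathbb{E}(X(qt)^2).
\]
The cross term is then controlled by Young's inequality: for each $\varepsilon>0$, $2(b+\sigma\rho)X(t)X(qt)\leq |b+\sigma\rho|(\varepsilon X(t)^2+\varepsilon^{-1}X(qt)^2)$, so that $p(t)=\mathbb{E}(X(t)^2)$ satisfies \eqref{eq:eqg1} with $\bar a(\varepsilon)=2a+\sigma^2+|b+\sigma\rho|\varepsilon$ and $\bar b(\varepsilon)=\rho^2+|b+\sigma\rho|\varepsilon^{-1}$. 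Since $2a+\sigma^2<0$, the coefficient $\bar a(\varepsilon)$ is negative for all small $\varepsilon$, and Lemma~\ref{lemma:lemp3}(i) gives a polynomial bound with exponent $\gamma(\varepsilon)=\log(-\bar a(\varepsilon)/\bar b(\varepsilon))/\log q$.

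It then remains to choose $\varepsilon$ so as to minimise $\gamma(\varepsilon)$; since $\log q<0$ this amounts to maximising $-\bar a(\varepsilon)/\bar b(\varepsilon)$. Writing $\beta=|b+\sigma\rho|$ and $c=-(2a+\sigma^2)>0$, a direct differentiation shows the maximiser $\varepsilon^\ast$ solves $\rho^2\varepsilon^2+2\beta\varepsilon-c=0$, whence $\varepsilon^\ast=\rho^{-2}(\sqrt{\beta^2+\rho^2 c}-\beta)$; substituting the defining relation $c=\rho^2(\varepsilon^\ast)^2+2\beta\varepsilon^\ast$ back into $-\bar a/\bar b$ collapses it to the clean identity $-\bar a(\varepsilon^\ast)/\bar b(\varepsilon^\ast)=(\varepsilon^\ast)^2$. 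Hence $\gamma(\varepsilon^\ast)=2\log(\varepsilon^\ast)/\log q$, which is exactly $\alpha$ in \eqref{eq:eqgt2pr} once $\beta^2+\rho^2 c=|b+\sigma\rho|^2-\rho^2(2a+\sigma^2)$ is unravelled. I expect the main obstacle to lie not in this optimisation but in the analytic justification of the differential inequalities themselves: establishing continuity and one-sided differentiability of $t\mapsto p(t)$, verifying the martingale property of the It\^o integrals under the stated moment hypotheses so that expectations annihilate them, and, in part (i), carrying out the mollification limit rigorously while confirming the absence of a local-time contribution. These integrability requirements are precisely what force the assumptions $\mathbb{E}(|X_0|^2)<\infty$ and $\mathbb{E}(|X_0|^4)<\infty$.
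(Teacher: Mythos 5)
Your proposal is correct and follows essentially the same route as the paper: both parts reduce to the pantograph differential inequality $D^+p(t)\leq \bar a\,p(t)+\bar b\,p(qt)$ for the first, respectively second, moment and then invoke Lemma~\ref{lemma:lemp3}(i), with the Young-parameter optimisation in part (ii) carried out exactly as the paper indicates (the paper obtains the absence of local time via the Tanaka--Meyer formula together with LeGall's criterion, Lemma~\ref{lemma:LeGall} with $k(x)=x^2$, which is the rigorous form of your mollification remark that the diffusion coefficient vanishes at the origin). Your explicit verification that $-\bar a(\varepsilon^\ast)/\bar b(\varepsilon^\ast)=(\varepsilon^\ast)^2$ is a welcome elaboration of the optimisation step the paper only asserts.
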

\begin{remark} \label{rem.1}
In Theorem \ref{theorem:pg1}, Part  (i), the exponent
$\alpha$ is sharp for $b>0$, as
\[
\lim_{t \rightarrow \infty} \frac{\mathbb{E}(|X(t)|)}{t^{\alpha}}
= C\ \mathbb{E}(|X_0|).
\]
\end{remark}
\begin{proof}[Proof of Remark~\ref{rem.1}]
To see this, recall that $X(t;0,X_0)=X(t;0,1)X_0$ for $t\geq 0$. Define $Y(t)=X(t;0,1)$ for $t\geq 0$. Then $b>0$ and $\rho=0$, $Y$ obeys
\begin{equation} \label{eq.sdeY}
dY(t)= (aY(t)+bY(qt))\,dt + \sigma Y(t)\,dB(t), \quad t\geq 0; \quad Y(0)=1.
\end{equation}
Then it can be shown, because $Y(0)>0$, that $Y(t)>0$ for all $t\geq 0$ almost surely. This is done by defining $\phi(t)=e^{(a-\sigma^2/2)t+\sigma B(t)}$ for $t\geq 0$. Then $\phi$ obeys the stochastic differential equation 
\[
d\phi(t)= a\phi(t)\,dt + \sigma \phi(t)\,dB(t), \quad t\geq 0; \quad \phi(0)=1.
\] 
Then, using stochastic integration by parts, we deduce that $Q(t):=Y(t)/\phi(t)$ obeys 
\begin{align*}
dQ(t)&=d(Y(t)/\phi(t))=\frac{1}{\phi(t)}dY(t)+Y(t)d\left(\phi(t)^{-1}\right) -\sigma^2 Y(t)\phi(t)\,dt \\
&=b\frac{1}{\phi(t)}Y(qt)\,dt = b\frac{1}{\phi(t)}\phi(qt)Q(qt)\,dt.
\end{align*}
Hence with $\mu(t):=b\phi(qt)/\phi(t)$ for $t\geq 0$, we see that $Q$ obeys the functional differential equation 
\[
Q'(t)=\mu(t)Q(qt), \quad t\geq 0;\quad Q(0)=1.
\]
Since $\phi(t)>0$ for all $t\geq 0$, it follows that $\mu(t)>0$ for all $t\geq 0$. Since $Q(0)=1$, we see that $Q(t)>0$ 
for all $t\geq 0$, using a standard deterministic argument, it follows that $Y(t)=Q(t)\phi(t)>0$ for all $t\geq 0$, as claimed. 

Next, by taking expectations across \eqref{eq.sdeY} leads to 
\[
m(t):= \mathbb{E}[Y(t)] = m(0)+\int_0^t (am(s)+bm(qs))\,ds, \quad t\geq 0.
\]
Since $m$ is continuous, we get 
\[
m'(t)=am(t)+bm(qt), \quad t\geq 0;\quad m(0)=1.
\]
Since $a<0$ and $b>0$, it follows from part (i) of Lemma \ref{lemma:lemp2} that 
\[
\lim_{t\to\infty} \frac{m(t)}{t^{\alpha}}=C_1,
\]
where $\alpha$ obeys \eqref{eq:eqgt1}. Finally, because $Y(t)>0$ for all $t\geq 0$, and $Y$ and $X_0$ are independent we have
\[
m(t)\mathbb{E}[|X_0|]=\mathbb{E}[Y(t)]\mathbb{E}[|X_0|]=\mathbb{E}[|Y(t)|]\mathbb{E}[|X_0|]=\mathbb{E}[|Y(t)X_0|]=\mathbb{E}[|X(t;0,X_0)|].
\]
 Therefore
\[
\lim_{t\to\infty} \frac{\mathbb{E}[|X(t;0,X_0)|]}{t^\alpha}
=\lim_{t\to\infty} \frac{m(t)\mathbb{E}[|X_0|]}{t^\alpha} = C_1 \mathbb{E}[|X_0|],
\]
as claimed. 
\end{proof}
\begin{proof}[Proof of Theorem~\ref{theorem:pg1}] 
Part (i): Notice by (\ref{stochpanto}) that $X$ is a continuous
semimartingale and therefore there exists a semimartingale local
time $\Lambda$ for $X$. By the Tanaka-Meyer formula
\cite[Chap.3,(7.9)]{KarShr} we therefore have
\begin{eqnarray}
|X(t)| &=& |X(0)| + \int_{0}^{t} \sgn(X(s))(aX(s)+bX(qs))\,ds
\nonumber \\
\label{eq:absx} & & \qquad+ \int_{0}^{t}\sgn
(X(s))\sigma X(s)\,dB(s)+2\Lambda_{t}(0), \quad \mbox{a.s.}
\end{eqnarray}
where $\Lambda_{t}(0)$ is the local time of $X$ at the origin. In
fact by Lemma \ref{lemma:LeGall} and the remark following it, we
have
\begin{equation}
\label{loc0} \Lambda_{t}(0) = 0 \quad \mbox{for all } t \geq 0,
\mbox{ a.s.}
\end{equation}
Thus, for any $t\geq0$, $t+h\geq 0$, (\ref{eq:absx}) gives
\begin{eqnarray*}
|X(t+h)|-|X(t)|  &=& \int_{t}^{t+h} \big(a|X(s)|
+\sgn (X(s))\ bX(qs)\big)\,ds \\
& & \qquad \qquad +\int_{t}^{t+h} \sigma |X(s)|\,dB(s) \\
&\leq& \int_{t}^{t+h} \{a|X(s)| + |b||X(qs)|\}\,ds + \int_{t}^{t+h}
\sigma |X(s)|\,dB(s).
\end{eqnarray*}
By Lemma \ref{lemma:lemg2} (ii), we get, with
$m(t)=\mathbb{E}(|X(t)|)$
\[
m(t+h)-m(t)\leq \int_{t}^{t+h} \{am(s)+|b|m(qs)\}\,ds.
\]
By Lemma \ref{lemma:lemg2} (i), $t \mapsto m(t)$ is continuous, so
\begin{equation}   \label{eq:eqcomp3}
D^+ m(t) \leq am(t)+|b|\ m(qt).
\end{equation}
Note that
$m(0)=\mathbb{E}(|X(0)|)\leq(\mathbb{E}(|X(0)|^2))^{1/2}<\infty$.
Since $\alpha$ defined by (\ref{eq:eqgt1}) satisfies
$a+|b|q^{\alpha}=0$, Lemma~\ref{lemma:lemp3} implies there exists
$C>0$ such that
\[
\mathbb{E}(|X(t)|) = m(t) \leq C\;m(0)(1+t)^{\alpha} = C\
\mathbb{E}(|X(0)|)\;(1+t)^{\alpha}, \quad t\geq 0,
\]
as required.

As for the proof of part (ii), in which $\rho\not=0$, we observe
that if $X(t)$ is governed by (\ref{stochpanto}), then
$Y(t)=X^{2}(t)$ is an It\^{o} process, with semimartingale
decomposition given by It\^{o}'s rule applied to $X^{2}(t)$ of:
\begin{multline*}
Y(t)=Y(0)+\int_{0}^{t} \{2a Y(s)+ 2bX(s)X(qs)
+ (\sigma X(s)+\rho X(qs))^{2}\}\,ds \\
+ \int_{0}^{t} \{2\sigma Y(s)+ 2 \rho X(s)X(qs)\}\,dB(s),
\end{multline*}
so for $t\geq 0$, $t+h\geq0$, we have
\begin{multline}   \label{eq:eqg6}
Y(t+h)- Y(t) = \int_{t}^{t+h} \{2 aY(s) + 2b X(s)X(qs)
+ ( \sigma X(s) + \rho X(qs))^2\} \,ds \\
+ \int_{t}^{t+h} \{2\sigma Y(s)+2\rho X(s)X(qs)\}\,dB(s).
\end{multline}
Noting that
\begin{align*}
(2\sigma Y(s)+ 2\rho X(s)X(qs))^{2}
&\leq 2(4\sigma^{2} Y^{2}(s)+4\rho^{2}Y(s)Y(qs)) \\
&\leq 8\sigma X^{4}(s)+4 \rho^{2}(X^{4}(s)+X^{4}(qs)),
\end{align*}
and using the fact (Lemma \ref{lemma:lemg2} (ii)) that
\[
\int_{0}^{t} \mathbb{E}(|X(s)|^{4})\,ds< \infty, \qquad \mbox{for
all} \ t\geq 0,
\]
we have
\begin{equation}   \label{eq:eqg7}
\mathbb{E} \int_{t}^{t+h} (2\sigma Y(s)+ 2\rho
X(s)X(qs))^{2}\,dB(s) = 0.
\end{equation}
Let $\eta$ be any positive constant. Using Young's inequality in
the form $2xy \leq \eta^2 x^{2} + \eta^{-2}y^{2}$ on the right
hand side of (\ref{eq:eqg6}) yields
\begin{multline*}
Y(t+h)- Y(t) \leq \int_{t}^{t+h} \Big\{(2a+\sigma^2)Y(s)
+\rho^2Y(qs) +
\\ |b+\sigma\rho|\left(\eta^{2}Y(s)+\frac{1}{\eta^2}Y(qs)\right)\Big\}\,ds
+ \int_{t}^{t+h} \{2 \sigma Y(s)+ 2\rho X(s)X(qs)\} \,dB(s).
\end{multline*}
Let $\widetilde{m}(t)=\mathbb{E}(Y(t))$. Note from Lemma
\ref{lemma:lemg2} (i) that $t\mapsto \widetilde{m}(t)$ is a
continuous and (by construction) nonnegative function. Taking
expectations both sides of the last inequality, and using
(\ref{eq:eqg7}), we therefore get
\begin{multline*}
\widetilde{m}(t+h) -\widetilde{m}(t) \leq \int_{t}^{t+h} (2a
+\sigma^2 + \eta^2|b+\sigma\rho|)\
\widetilde{m}(s)\,ds \\
+\int_t^{t+h}\left(\rho^{2} + \frac{1}{\eta^2}|b+\sigma\rho| \right)
\widetilde{m}(qs)\,ds.
\end{multline*}
Using the continuity of $\widetilde{m}$, we now get
\begin{equation}   \label{eq:eqcomp4}
D^{+}\widetilde{m}(t) \leq (2a +\sigma^2 +
\eta^{2}|b+\sigma\rho|)\ \widetilde{m}(t) +
\left(\frac{1}{\eta^{2}}|b+\sigma\rho| +\rho^2 \right)
\widetilde{m}(qt).
\end{equation}
Since $2a +\sigma^2 < 0$, we can choose $\eta>0$ such that ${\bar
a} = 2a +\sigma^2 + |b+\sigma\rho|\eta^{2}<0$. Now, using part (i)
of Lemma~\ref{lemma:lemp3}, there exists $C>0$ and
$\alpha\in\mathbb{R}$ such that
\[
\mathbb{E}(|X(t)|^2) =\widetilde{m}(t) \leq
C\;\widetilde{m}(0)\;(1+t)^{\alpha} =
C\;\mathbb{E}(|X(0)|^2)\;(1+t)^{\alpha}, \quad t\geq 0.
\]

Finally we choose $\eta>0$ optimally; i.e., in such a way that
$\alpha=\alpha(\eta)$ given by
\begin{equation}       \label{eq:eqgt2}
2a+\sigma^2+\eta^2|b+\sigma\rho| +
\left(\rho^2+\frac{1}{\eta^2}|b+\sigma\rho|\right)q^{\alpha} =0
\end{equation}
is minimised (such a minimum exists, once $2a+\sigma^2<0$).
The optimal choice of $\eta$ gives rise to the value of $\alpha$
given in (\ref{eq:eqgt2pr}). This value of $\alpha$ gives the
sharpest bound on the growth rate of $\mathbb{E}(|X(t)|^{2})$, in the sense that any other choice of $\eta$ gives rise to a larger 
$\alpha$ than that quoted in  (\ref{eq:eqgt2pr}). The
proof is thus complete.
\end{proof}
The line of reasoning used in Theorem \ref{theorem:pg1}, taken in
conjunction with Lemma \ref{lemma:lemp3}, enables us to obtain
polynomial stability of the equilibrium solution of
(\ref{stochpanto}) in the first and second mean in the sense of
Definition \ref{stabildef} in certain parameter regions.
\begin{theorem}   \label{theorem:pg2}
Let $(X(t))_{t \geq 0}$ be the process uniquely defined by
(\ref{stochpanto}).
\begin{itemize}
\item[(i)]
Let $\rho=0$, and $\mathbb{E}[|X_0|^2]<\infty$. 
If $a+|b|<0$, there exists $C>0$ and $\alpha<0$ such that
\[
\mathbb{E}(|X(t)|) \leq C\;\mathbb{E}(|X_0|)\;(1+ t)^{\alpha}, \quad t
\geq 0,
\]
with $\alpha$ given by (\ref{eq:eqgt1}).
\item[(ii)]
Let $\rho\not=0$, and $\mathbb{E}[|X_0|^4]<\infty$. 
If $2a+\sigma^2+\rho^2+2|b+\sigma\rho|<0$, there exists 
$C>0$, $\alpha<0$ such that
\[
\mathbb{E}(|X(t)|^2) \leq C\;\mathbb{E}(|X_0|^2)\; (1+t)^{\alpha},
\quad t \geq 0,
\]
with $\alpha$ given by (\ref{eq:eqgt2pr}).
\end{itemize}
\end{theorem}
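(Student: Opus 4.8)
The plan is to recognise that Theorem~\ref{theorem:pg2} is not an independent estimate but a sign refinement of Theorem~\ref{theorem:pg1}: the polynomial bounds are already supplied verbatim by the preceding theorem, and the only additional work is to check that the strengthened parameter hypotheses are precisely what force the exponent $\alpha$ to be strictly negative. This upgrades ``polynomially bounded'' to ``polynomially stable'' in the sense of Definition~\ref{stabildef}. So no new probabilistic input is required; the argument is a sign analysis of the exponent, exploiting that $q\in(0,1)$ makes $\log q<0$.

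For part (i), I would first note that $a+|b|<0$ together with $|b|\geq 0$ forces $a<0$, so the hypothesis of Theorem~\ref{theorem:pg1}(i) is met and we obtain $\mathbb{E}(|X(t)|)\leq C\,\mathbb{E}(|X_0|)(1+t)^{\alpha}$ with $\alpha=(\log q)^{-1}\log(-a/|b|)$. Since $\log q<0$, the sign of $\alpha$ is opposite to that of $\log(-a/|b|)$. The condition $a+|b|<0$ rearranges to $-a/|b|>1$, whence $\log(-a/|b|)>0$ and therefore $\alpha<0$, as required. This case is immediate.

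For part (ii) the same strategy applies: the hypothesis $2a+\sigma^2+\rho^2+2|b+\sigma\rho|<0$ implies $2a+\sigma^2<0$ (the omitted terms being nonnegative), so Theorem~\ref{theorem:pg1}(ii) delivers the mean-square bound with $\alpha$ given by \eqref{eq:eqgt2pr}. Again $\log q<0$, so $\alpha<0$ is equivalent to the argument of the logarithm exceeding $1$, that is,
\[
\sqrt{|b+\sigma\rho|^2-\rho^2(2a+\sigma^2)}-|b+\sigma\rho|>\rho^2.
\]
The one step requiring any care is the passage from this inequality to the stated hypothesis: I would move $|b+\sigma\rho|$ to the right, confirm that \emph{both} sides are strictly positive before squaring---the left because $2a+\sigma^2<0$ makes the radicand exceed $|b+\sigma\rho|^2$, the right because $\rho\neq 0$---and then square to obtain $-\rho^2(2a+\sigma^2)>\rho^4+2\rho^2|b+\sigma\rho|$. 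Dividing by $\rho^2>0$ yields exactly $2a+\sigma^2+\rho^2+2|b+\sigma\rho|<0$. Since each implication is reversible, the stated condition is in fact equivalent to $\alpha<0$, and the positivity check is the only place where the equivalence (rather than a one-sided implication) could fail; once it is settled the remaining algebra is routine.
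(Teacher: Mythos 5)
Your proposal is correct and follows essentially the same route as the paper: Theorem~\ref{theorem:pg2} is obtained as a sign refinement of Theorem~\ref{theorem:pg1}, with the only work being to check that the strengthened hypotheses force $\alpha<0$. The single (minor) difference is in part~(ii): you verify $\alpha<0$ by direct algebra on the closed form \eqref{eq:eqgt2pr} (correctly checking positivity before squaring), whereas the paper notes that the choice $\eta=1$ in \eqref{eq:eqgt2} already yields a negative exponent, so the minimising $\eta$ can only do better; both arguments are valid and lead to the same conclusion.
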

\begin{proof}
For part (i), we may use the analysis of
Theorem~\ref{theorem:pg1}; we have for some $C>0$
\[
\mathbb{E}(|X(t)|)\leq C\; \mathbb{E}(|X_0|)\; (1+t)^{\alpha},
\]
where $\alpha$ satisfies $a+|b|q^{\alpha}=0$. If $a+|b|<0$, then
we must have $\alpha<0$, proving the result.

For part (ii), we revisit the method of proof of Theorem
\ref{theorem:pg1}, part (ii). If
$2a+\sigma^2+\rho^2+2|b+\sigma\rho|<0$, there exists $\eta>0$ such
that
\[
2a+\sigma^2+\eta^2|b+\sigma\rho|+
\frac{1}{\eta^2}|b+\sigma\rho|+\rho^2<0.
\]
In this case, the optimal choice of $\eta$ which minimises
$\alpha=\alpha(\eta)$ given in (\ref{eq:eqgt2}), yields a constant
$\alpha<0$ given by (\ref{eq:eqgt2pr}).
\end{proof}

\begin{remark}    \label{remark:rpg2}
In the case $\rho=0$, the solution of (\ref{stochpanto}) is
asymptotically stable in first mean when the corresponding
deterministic problem is asymptotically stable. When $\rho\not=0$,
observe that $2(a+|b|)+(|\sigma|+|\rho|)^2<0$ implies
$2a+\sigma^2+\rho^2+2|b+\sigma\rho|<0$. Therefore, the solution of
(\ref{stochpanto}) is asymptotically stable in mean square
whenever the deterministic problem is asymptotically stable,
provided the noise coefficients $\sigma$ and $\rho$ are not too
large.
\end{remark}

\subsection{Almost sure polynomial growth bounds} \label{aspolgro}
We can use the estimates of the previous section to determine the
polynomial asymptotic behaviour of the solution of
(\ref{stochpanto}) in an almost sure sense. In Theorem
\ref{theorem:pg3}, we concentrate on the a.s. polynomial
boundedness, while Theorem \ref{theorem:pb1} concerns the a.s.
polynomial stability of solutions of (\ref{stochpanto}). Again we
treat in both propositions the cases $\rho=0$ and $\rho\neq 0$
separately, obtaining sharper results in the former case.

We note by the Burkholder-Davis-Gundy inequality that there exists
a positive, universal, constant $c_2$ (universal in the sense that
it is independent of the process $X$ and times $t_1$, $t_2$) such
that
\begin{equation}
\label{bdg} \mathbb{E}\left[ \sup_{t_1\leq s\leq t_2}
\left(\int_{t_1}^{s} X(u) \,dB(u)\right)^2 \right] \leq c_2
\mathbb{E}\left[\int_{t_1}^{t_2} X^2(s)\,ds \right].
\end{equation}
This bound is of great utility in obtaining estimates on the
expected value of the suprema of the process.

We now proceed with the main results of this section.
\begin{theorem}  \label{theorem:pg3}
Let $(X(t))_{t \geq 0}$ be the process uniquely defined by
(\ref{stochpanto}).
\begin{itemize}
\item[(i)]
Let $\rho=0$, $\mathbb{E}(|X_0|^2)<\infty$. If $a<0$, then
\[
\limsup_{t \rightarrow \infty} \frac{\log|X(t)|}{\log t}
\leq\alpha + 1, \quad \text{a.s.},
\]
where $\alpha$ is defined by (\ref{eq:eqgt1}).
\item[(ii)]
Let $\rho\not=0$, $\mathbb{E}(|X_0|^4)<\infty$. If
$2a+\sigma^2<0$, then
\[
\limsup_{t \rightarrow \infty} \frac{\log|X(t)|}{\log t}
\leq\frac{1}{2}(\alpha + 1), \quad \text{a.s.},
\]
where $\alpha$ is defined by (\ref{eq:eqgt2pr}).
\end{itemize}
\end{theorem}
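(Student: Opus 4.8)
The plan is to follow Mao's device (\cite{Maobk2}, Theorem 4.3.1): upgrade the $p$-th mean polynomial bounds of Theorem~\ref{theorem:pg1} to an almost sure statement by controlling the mean of the supremum of the solution over the unit intervals $[n,n+1]$, and then invoking the Borel--Cantelli lemma. Because $X(t;0,X_0)=X_0X(t;0,1)$ with $X_0$ independent of $B$, each supremum-moment factors as $\mathbb{E}|X_0|^p$ times the corresponding quantity for $X(t;0,1)$, and $\log|X_0|/\log t\to0$ almost surely on $\{X_0\neq0\}$ while $X\equiv0$ on $\{X_0=0\}$; hence the constants carrying $X_0$ disappear in the final ratio and I suppress them. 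For part (i) I would study $S_n:=\mathbb{E}[\sup_{n\le t\le n+1}|X(t)|]$ and for part (ii) $T_n:=\mathbb{E}[\sup_{n\le t\le n+1}X^2(t)]$.

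First consider part (i). Restricting the Tanaka--Meyer representation \eqref{eq:absx} (with $\Lambda_t(0)\equiv0$) to $[n,n+1]$ and taking suprema, then expectations, gives
\[
S_n\le \mathbb{E}|X(n)|+\int_n^{n+1}\bigl(|a|\,\mathbb{E}|X(s)|+|b|\,\mathbb{E}|X(qs)|\bigr)\,ds+\mathbb{E}\Bigl[\sup_{n\le t\le n+1}\Bigl|\int_n^t \sigma\,\sgn(X(s))X(s)\,dB(s)\Bigr|\Bigr].
\]
The first two terms are at most $C(1+n)^{\alpha}$ by Theorem~\ref{theorem:pg1}(i), using the interval estimates $(1+s)^\alpha,(1+qs)^\alpha\le C(1+n)^\alpha$ for $s\in[n,n+1]$. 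For the martingale term I would apply the $L^1$ (Davis) form of the Burkholder--Davis--Gundy inequality, then the pointwise bound $\int_n^{n+1}X^2\,ds\le(\sup_{[n,n+1]}|X|)\int_n^{n+1}|X|\,ds$ and Cauchy--Schwarz, to obtain $C\,S_n^{1/2}(\int_n^{n+1}\mathbb{E}|X|\,ds)^{1/2}\le C'S_n^{1/2}(1+n)^{\alpha/2}$. This yields the self-improving inequality $S_n\le C(1+n)^\alpha+C'S_n^{1/2}(1+n)^{\alpha/2}$, a quadratic in $S_n^{1/2}$ whose solution is $S_n\le C''(1+n)^\alpha$. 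The hypothesis $\mathbb{E}|X_0|^2<\infty$ (via Lemma~\ref{lemma:lemg2}) ensures $S_n<\infty$ a priori, which is what makes solving the quadratic legitimate.

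Part (ii) is the same argument one moment-order higher: starting from the It\^o expansion \eqref{eq:eqg6} of $X^2$, I would bound the drift by second moments through Young's inequality, and treat the martingale term by the identical $L^1$ BDG plus Cauchy--Schwarz bootstrap, the relevant factorisation being $\int_n^{n+1}(2\sigma X^2+2\rho XX(qs))^2\,ds\le(\sup_{[n,n+1]}X^2)\int_n^{n+1}(8\sigma^2X^2(s)+8\rho^2X^2(qs))\,ds$; this pairs $T_n^{1/2}$ against a pure second-moment integral controlled by Theorem~\ref{theorem:pg1}(ii), and solving the quadratic gives $T_n\le C(1+n)^\alpha$ (here $\mathbb{E}|X_0|^4<\infty$ is exactly what guarantees $T_n<\infty$ before bootstrapping). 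With the sup-moment bounds in hand, the Borel--Cantelli step is routine: for fixed $\theta>1$, Markov's inequality gives $\mathbb{P}(\sup_{[n,n+1]}|X|>(1+n)^{\alpha+\theta})\le C(1+n)^{-\theta}$ in (i) and $\mathbb{P}(\sup_{[n,n+1]}X^2>(1+n)^{\alpha+\theta})\le C(1+n)^{-\theta}$ in (ii). Summability requires $\theta>1$ --- this is the origin of the additive $1$ --- so almost surely, for all large $n$, $\sup_{[n,n+1]}|X|\le(1+n)^{\alpha+\theta}$ (resp. $(1+n)^{(\alpha+\theta)/2}$). Reading this off for $t\in[n,n+1]$, letting $t\to\infty$ and then $\theta\downarrow1$ along a sequence, yields $\limsup\log|X(t)|/\log t\le\alpha+1$ in (i) and $\le\frac12(\alpha+1)$ in (ii), the factor $\tfrac12$ coming from the square root in passing from $X^2$ back to $|X|$.

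The step demanding the most care --- and the only genuine obstacle --- is the supremum of the stochastic integral. A direct second- (resp. fourth-) moment estimate is not available, because $a<0$ does not entail $2a+\sigma^2<0$, nor does $2a+\sigma^2<0$ entail any polynomial control of the fourth moment; indeed these higher moments can grow exponentially even while the lower moment decays (already for $b=0$, geometric Brownian motion). The purpose of using the $L^1$ form of BDG together with the factorisation of the quadratic variation through $\sup_{[n,n+1]}$ is precisely to keep the estimate at the moment order already controlled by Theorem~\ref{theorem:pg1}, converting it into a self-improving quadratic rather than a bound requiring a higher, possibly divergent, moment.
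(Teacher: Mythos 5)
Your proposal is correct and follows the same overall strategy as the paper: the polynomial $p$-th mean bounds of Theorem~\ref{theorem:pg1}, a bound of order $n^{\alpha}$ on the expected supremum of $|X|^p$ over a partition of $[0,\infty)$ into intervals via the Burkholder--Davis--Gundy inequality, and then Markov plus Borel--Cantelli, with the additive $+1$ arising from summability exactly as you describe. The substantive difference lies in how the martingale supremum is tamed. You work on unit intervals and close a self-improving quadratic $S_n\le C(1+n)^{\alpha}+C'S_n^{1/2}(1+n)^{\alpha/2}$, obtained by splitting $\int_n^{n+1}X^2\,ds\le\bigl(\sup_{[n,n+1]}|X|\bigr)\int_n^{n+1}|X|\,ds$ and applying Cauchy--Schwarz; the paper instead uses intervals $[a_n,a_{n+1}]$ of a tuned length $\tau$ with $c_2\tau^{1/2}|\sigma|=\tfrac12$, bounds $\int_{a_n}^{a_{n+1}}X^2\,ds\le\tau\sup X^2$, and absorbs the resulting $\tfrac12\,\mathbb{E}\bigl[\sup|X|\bigr]$ into the left-hand side. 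Both devices rest on the a priori finiteness of the supremum moment supplied by Lemma~\ref{lemma:lemg2}, and both are legitimate. For part (ii) the paper avoids your bootstrap altogether: rather than expanding $X^2$ by It\^o's formula, it squares the integral representation of $X$ itself over $[a_n,t]$ and applies the $L^2$ form of the BDG inequality \eqref{bdg}, so the martingale contribution is bounded directly by $c_2\int_{a_n}^{a_{n+1}}\mathbb{E}\bigl[(\sigma X(s)+\rho X(qs))^2\bigr]\,ds\le Cn^{\alpha}$ at the moment order already controlled by Theorem~\ref{theorem:pg1}(ii). This confirms your diagnosis that one must not be driven to fourth moments of $X$, but shows that the quadratic bootstrap, while valid, is not the only escape; your route trades the paper's tuned interval length and change of decomposition for a slightly heavier but uniform argument across both parts.
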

\begin{proof}
In the proof of part (i) of Theorem \ref{theorem:pg1}, we
established the existence of $C>0$ and $\alpha$ (determined by
(\ref{eq:eqgt1})), so that for $t\geq 0$ we have 
\begin{equation*}   
\mathbb{E}(|X(t)|)\leq C\; \mathbb{E}(|X_0|)\;(1+ t)^{\alpha} ,
\end{equation*}
whenever $a<0$. Therefore for $t\geq 1$ we have with $C^\ast=C\max(1,2^\alpha)\mathbb{E}(|X_0|)$ 
\begin{equation}   \label{eq:eqpe1}
\mathbb{E}(|X(t)|)
\leq C\; \mathbb{E}(|X_0|)\;t^\alpha \cdot (1+ 1/t)^{\alpha} \leq C^\ast\;t^\alpha.
\end{equation}
Take $\varepsilon>0$ to be fixed, and define $\lambda=-(\alpha+ 1
+\varepsilon)$. Set $a_n=n\tau$, where
$\tau^{\frac{1}{2}}c_2|\sigma|=\frac{1}{2}$, and $c_2$ is the
constant in (\ref{bdg}), so that
\begin{equation}         \label{EQ4}
c_2 (a_{n+1}-a_{n})^{\frac{1}{2}} |\sigma| = \frac{1}{2}.
\end{equation}
Moreover, we have
\begin{equation}   \label{eq:eqc2}
\int_{a_n}^{a_{n+1}} s^{\alpha}\,ds \leq \tau(\tau
n)^{\alpha}\max(1,2^{\alpha}).
\end{equation}
To see this, note for $n \geq 1$ that
\[
\frac{1}{a_{n+1}-a_n} \int_{a_n}^{a_{n+1}} \left( \frac{s}{a_n}
\right)^{\alpha} \,ds \leq \max_{a_n\leq s \leq a_{n+1}} \left(
\frac{s}{a_n} \right)^{\alpha}.
\]
The right hand side of this expression equals 1 if $\alpha\leq0$,
while for $\alpha>0$, it is $(a_{n+1}/a_n)^{\alpha}$. However,
$(a_{n+1}/a_n)^{\alpha}\leq2^{\alpha}$, for $\alpha>0$ and $n\geq
1$.

For every $t \in \mathbb{R}^+$, there exists $n\in \mathbb{N}$
such that $a_n \leq t < a_{n+1}$. Consider $t \in [a_n, a_{n+1})$,
so
\[
X(t) = X(a_n) + \int_{a_n}^{t} \{ a X(s) + b X(qs)\} \,ds +
 \int_{a_n}^t \sigma  X(s)\,dB(s).
\]
Using the triangle inequality, taking suprema and expectations
over $[a_n,a_{n+1}]$, and noting that $X(t)$ is continuous, we
arrive at
\begin{multline}    \label{eq:eqpe3}
\mathbb{E}\left( \sup_{a_n \leq t \leq a_{n+1}} |X(t)| \right)
 \leq \mathbb{E}(| X(a_n)|)  + \int_{a_n}^{a_{n+1}} \{| a|\;
 \mathbb{E}(|X(s)|) + |b|\; \mathbb{E}(| X(qs)|)\} \,ds \\
+ \mathbb{E} \left( \sup_{a_n \leq t \leq a_{n+1}}
          \left| \int_{a_n}^t \sigma  X(s)\,dB(s)\right|\right).
\end{multline}
Using (\ref{bdg}) and (\ref{EQ4}), we can bound the last term on
the right hand side of (\ref{eq:eqpe3}) for $n\geq 1$ by
\begin{eqnarray}
\lefteqn{\mathbb{E} \left(\sup_{a_n \leq t \leq a_{n+1}}
          \left| \int_{a_n}^t \sigma  X(s)\,dB(s)\right|\right)} &&
\nonumber\\
& \leq & c_2 \; \mathbb{E} \left(\int_{a_n}^{a_{n+1}} \sigma^2
X^2(s)\,ds \right)^{\frac{1}{2}}
\leq  c_2\; \mathbb{E} \left((a_{n+1} - a_n)  \sigma^2
     \sup_{a_n \leq t \leq a_{n+1}}  X^2(t)\right)^{\frac{1}{2}}
 \nonumber\\
&=&\frac{1}{2}\ \mathbb{E}\left(\sup_{a_n \leq t \leq a_{n+1}}
|X(t)| \right). \label{eq:eqpe4}
\end{eqnarray}
Hence, for $n$ sufficiently large, from (\ref{eq:eqpe1}), (\ref{eq:eqc2}),
(\ref{eq:eqpe3}) and  (\ref{eq:eqpe4}) we obtain
\begin{eqnarray*}
\lefteqn{\mathbb{E}(\sup_{a_n \leq t\leq a_{n+1}} |X(t)|)} \\
&\leq& 2\ \mathbb{E}(|X(a_n)|) +  2 \int_{a_n}^{a_{n+1}}
\{|a|\;\mathbb{E}(|X(s)|) + |b|\;\mathbb{E}(|X(qs)|)\}\,ds \\
&\leq& 2\;C^\ast\;a_n^{\alpha} + 2\;(|a|C^\ast+|b|C^\ast q^{\alpha})
\int_{a_n}^{a_{n+1}} s^{\alpha}\,ds \leq\tilde{C}\;n^{\mu\alpha},
\end{eqnarray*}
where $\tilde{C}=2\;C^\ast\tau^{\alpha}+2\;C^\ast\tau^{\alpha+1}
(|a|+|b|q^{\alpha})\max(1,2^{\alpha})$. 
Thus, by Markov's inequality, for every $\gamma>0$, we have
\begin{eqnarray*}
\mathbb{P}\left( \sup_{a_n \leq t\leq a_{n+1}} |X(t)|
n^{-\alpha-1-\varepsilon}\geq\gamma\right) &\leq&
\frac{1}{\gamma}\; \mathbb{E}\left(\sup_{a_n \leq t\leq a_{n+1}}
|X(t)|\; n^{-\alpha-1-\varepsilon}\right) \\
&\leq&
\frac{1}{\gamma}\frac{1}{n^{1+\varepsilon}}\frac{1}{n^{\alpha}}
\cdot\tilde{C}n^{\alpha} =
\frac{\tilde{C}}{\gamma}\frac{1}{n^{1+\varepsilon}}.
\end{eqnarray*}
Therefore, by the first Borel-Cantelli Lemma, we must have
\begin{equation}   \label{eq:eqpe5}
\lim_{n \rightarrow \infty} \sup_{a_n\leq t \leq a_{n+1}}
|X(t)|\;n^{\lambda} =0\quad \text{a.s.}
\end{equation}
For each $t\in \mathbb{R}^{+}$, there exists $n(t)\in \mathbb{N}$
such that $\tau\; n(t) \leq t < \tau(n(t)+1)$. Therefore,
\begin{equation}  \label{eq:eqpe6}
\lim_{t \rightarrow \infty} \frac{t}{\tau\; n(t)} = 1.
\end{equation}
Using (\ref{eq:eqpe5}) and (\ref{eq:eqpe6}), we have
\begin{eqnarray*}
\limsup_{t \rightarrow \infty} |X(t)|\; t^{\lambda} &\leq&
\limsup_{t \rightarrow \infty} \sup_{\tau\; n(t)\leq s\leq
\tau(n(t)+1)} |X(s)| (\tau \; n(t))^{\lambda} \cdot \limsup_{t
\rightarrow \infty}
\left(\frac{t}{\tau\; n(t)}\right)^{\lambda} \\
&=&\limsup_{n \rightarrow \infty} \sup_{a_n \leq s \leq a_{n+1}}
|X(s)|\; n^{\lambda} \cdot \limsup_{t \rightarrow \infty}
\left(\frac{t}{\tau\; n(t)}\right)^{\lambda} =0, \quad \text{a.s.}
\end{eqnarray*}
Therefore
\[
\limsup_{t \rightarrow \infty} \frac{\log|X(t)|}{\log t}
\leq-\lambda = \alpha+1+\varepsilon, \quad \text{a.s.}
\]
Letting $\varepsilon\downarrow 0$ through the rationals completes
the proof.

The proof of part (ii) is very similar, and the outline only will
be sketched here. By hypothesis, Theorem \ref{theorem:pg1} tells
us that there exists $\alpha$ satisfying (\ref{eq:eqgt2}) such
that
\[
\mathbb{E}(|X(t)|^{2})\leq C\; \mathbb{E} (|X_0|^2) \ (1+ t)^{\alpha}.
\]
Once again, this means for all $t\geq 1$ we have 
\[
\mathbb{E}(|X(t)|^{2})\leq C^\ast\;t^{\alpha}.
\]
Define, for every $\varepsilon>0$,
$\lambda=-(\alpha+1+\varepsilon)$ and $\varepsilon\in (0,1)$ as
before.  Define $a_n$ as above. To get a bound on
$\mathbb{E}(\sup_{a_n\leq t \leq a_{n+1}} X^{2}(t))$, proceed as
for equation (\ref{eq:eqpe3}) to obtain
\begin{multline}  \label{eq:eqpe7}
\mathbb{E}\left( \sup_{a_n \leq t \leq a_{n+1}} X^2(t) \right)
 \leq 3\;\bigg\{\mathbb{E}(X^2(a_n)) +
\mathbb{E}\left( \int_{a_n}^{a_{n+1}}
|aX(s)+ b X(qs)| \,ds \right)^2\\
+ \mathbb{E} \left[ \sup_{a_n \leq t \leq a_{n+1}}
 \left( \int_{a_n}^t (\sigma X(s)+\rho X(qs))\,dB(s)\right)^{2}
\right] \bigg\}.
\end{multline}
By (\ref{eq:eqc2}), the second term on the right hand side of
(\ref{eq:eqpe7}) has as a bound (for $n\geq 1$):
\begin{align*}
\mathbb{E}\left( \int_{a_n}^{a_{n+1}} |aX(s)+ b X(qs)| \,ds
\right)^2
&\leq (2a^{2}C^\ast+2b^{2}C^\ast q^{\alpha})(a_{n+1}-a_{n})
\int_{a_n}^{a_{n+1}} s^{\alpha}\,ds \\
&\leq (2a^{2}C^\ast +2b^{2}C^\ast q^{\alpha})\max(1,2^{\alpha})\cdot
\tau^{\alpha+1}n^{\alpha}.
\end{align*}
Using the Burkholder-Davis-Gundy inequality and (\ref{eq:eqc2}),
the third term on the right hand side of (\ref{eq:eqpe7}) has as a
bound (for $n\geq1$):
\begin{eqnarray*}
\lefteqn{ \mathbb{E} \left[ \sup_{a_n \leq t \leq a_{n+1}}
 \left( \int_{a_n}^t (\sigma X(s)+\rho X(qs))\,dB(s)\right)^{2}
\right]
} \\
&\leq& c_2\;\mathbb{E}\left[
\int_{a_n}^{a_{n+1}} (\sigma X(s)+\rho X(qs))^{2}\,ds\right] \\
&\leq& c_2\;(2\sigma^{2}C^\ast+2\rho^{2}C^\ast q^{\alpha})
(a_{n+1}-a_{n}) \int_{a_n}^{a_{n+1}} s^{\alpha}\,ds \\
&\leq& c_2\;(2\sigma^{2}C^\ast+2\rho^{2}C^\ast q^{\alpha})\max(1,2^{\alpha})
\cdot \tau^{\alpha+1} n^{\alpha}.
\end{eqnarray*}
Inserting these estimates into (\ref{eq:eqpe7}), we obtain
\[
\mathbb{E}\left( \sup_{a_n \leq t \leq a_{n+1}} X^2(t) \right)
\leq\tilde{C}\;n^{\alpha},
\]
where $\tilde{C}$ is some $n$-independent constant. The rest of
the proof goes through as above.
\end{proof}

The results on polynomial stability in the first and second mean
can be used to establish almost sure stability of solutions of
(\ref{stochpanto}) with a polynomial upper bound on the decay
rate.

\begin{theorem}    \label{theorem:pb1}
Let $(X(t))_{t \geq 0}$ be the process uniquely defined by
(\ref{stochpanto}).
\begin{itemize}
\item[(i)]
Let $\rho=0$,  $\mathbb{E}(|X_0|^2)<\infty$. If $a+ |b|/q <0$,
then $\alpha$, defined by (\ref{eq:eqgt1}), satisfies $\alpha<-1$,
and we have
\[
\limsup_{t \rightarrow \infty} \frac{\log|X(t)|}{\log t} \leq
\alpha + 1, \quad \text{a.s.},
\]
so $X(t)\rightarrow0$ as $t\rightarrow \infty$, a.s.
\item[(ii)]
Let $\rho\not=0$,  $\mathbb{E}(|X_0|^4)<\infty$. If
\begin{equation}   \label{eq:eqpb1}
2a+\sigma^2+\frac{\rho^2}{q}+\frac{2}{\sqrt{q}}|b+\sigma\rho|<0,
\end{equation}
then $\alpha$, defined by (\ref{eq:eqgt2pr}), satisfies
$\alpha<-1$, and we have
\[
\limsup_{t \rightarrow \infty} \frac{\log|X(t)|}{\log t} \leq
\frac{1}{2}(\alpha + 1), \quad \text{a.s.},
\]
so $X(t)\rightarrow0$ as $t\rightarrow \infty$, a.s.
\end{itemize}
\end{theorem}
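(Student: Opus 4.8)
The plan is to recognise that the almost sure bounds asserted here coincide exactly with those already established in Theorem~\ref{theorem:pg3}; the genuinely new content is that the sharper hypotheses force the exponent $\alpha$ to drop below $-1$, turning the limsup bound negative and hence forcing decay. I would therefore argue in three steps: first, check that each hypothesis implies the weaker hypothesis of the corresponding part of Theorem~\ref{theorem:pg3}, so that theorem applies; second, show each hypothesis is in fact \emph{equivalent} to $\alpha<-1$; and third, deduce almost sure convergence to zero from a strictly negative limsup.

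For part~(i), with $\alpha=\frac{1}{\log q}\log(-a/|b|)$ and $\log q<0$, I would multiply $\alpha<-1$ through by $\log q$ (reversing the inequality) to get $\log(-a/|b|)>\log(1/q)$, i.e.\ $-a/|b|>1/q$, i.e.\ $a+|b|/q<0$; running the chain backwards gives the converse. Since $q\in(0,1)$ yields $|b|/q\ge|b|\ge0$, the hypothesis also forces $a<0$, exactly the condition under which Theorem~\ref{theorem:pg3}(i) gives $\limsup_{t\to\infty}\log|X(t)|/\log t\le\alpha+1$. (The degenerate case $b=0$, in which $\alpha$ is undefined, is trivial: the equation is then a geometric Brownian motion whose first mean decays exponentially, faster than any polynomial.)

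For part~(ii) I would write $\beta=|b+\sigma\rho|$ and $\delta=2a+\sigma^2$. First note that, since the added terms $\rho^2/q$ and $2\beta/\sqrt q$ are nonnegative, \eqref{eq:eqpb1} forces $\delta<0$; this is the hypothesis of Theorem~\ref{theorem:pg3}(ii) and it makes $\alpha$ well-defined through \eqref{eq:eqgt2pr}, which reads $\alpha=\frac{2}{\log q}\log\big((\sqrt{\beta^2-\rho^2\delta}-\beta)/\rho^2\big)$. Using $\log q<0$, the condition $\alpha<-1$ is equivalent to $(\sqrt{\beta^2-\rho^2\delta}-\beta)/\rho^2>1/\sqrt q$, i.e.\ $\sqrt{\beta^2-\rho^2\delta}>\beta+\rho^2/\sqrt q$. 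Both sides are strictly positive — the left because $\delta<0$ gives $\beta^2-\rho^2\delta>\beta^2\ge0$ — so squaring is reversible; cancelling $\beta^2$ and dividing by $\rho^2>0$ reduces it exactly to $\delta+2\beta/\sqrt q+\rho^2/q<0$, which is \eqref{eq:eqpb1}. Theorem~\ref{theorem:pg3}(ii) then gives $\limsup_{t\to\infty}\log|X(t)|/\log t\le\tfrac12(\alpha+1)$.

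In either case $\alpha<-1$ makes the right-hand side of the limsup bound strictly negative; fixing $\varepsilon>0$ small enough that this bound plus $\varepsilon$ stays negative, the definition of limsup furnishes a random $T$ beyond which $|X(t)|\le t^{c}$ with $c<0$, whence $X(t)\to0$ almost surely as $t\to\infty$. The one place demanding care is the squaring step in part~(ii): one must confirm both sides are positive before squaring, and this is precisely where the inherited sign condition $\delta<0$ is used. This is the only point at which a mishandling of signs could invalidate the equivalence, and so I expect it to be the main (if modest) obstacle.
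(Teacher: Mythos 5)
Your proposal is correct and follows essentially the same route as the paper: both parts reduce to the almost sure bounds of Theorem~\ref{theorem:pg3} and then verify that the strengthened hypotheses force $\alpha<-1$, after which negativity of the limsup gives $X(t)\to 0$ a.s. The only divergence is cosmetic: in part (ii) the paper substitutes $\eta^2=1/\sqrt{q}$ into the implicit equation \eqref{eq:eqgt2} for $\alpha(\eta)$, whereas you manipulate the closed form \eqref{eq:eqgt2pr} directly (correctly checking positivity before squaring, and in fact obtaining an equivalence rather than a one-way implication), and your separate treatment of the degenerate case $b=0$ in part (i) covers a point the paper passes over.
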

\begin{proof}
In part (i) of the theorem, we need $\alpha$ defined by
(\ref{eq:eqgt1}) to satisfy $\alpha<-1$. Considering
(\ref{eq:eqgt1}), we see that $a+|b|/q<0$ implies $\alpha<-1$.

The proof of part (ii) follows along identical lines. Suppose
(\ref{eq:eqpb1}) holds. If we choose $\eta^2=1/\sqrt{q}$, then
\[
2a+\sigma^2+\eta^2|b+\sigma\rho| +
\left(\rho^2+\frac{1}{\eta^2}|b+\sigma\rho|\right)\frac{1}{q}<0,
\]
and so $\alpha$ defined by (\ref{eq:eqgt2pr}) satisfies
$\alpha<-1$.
\end{proof}

\section{Exponential Upper Bounds} \label{expupper}
In~\cite{Mao1}, it is shown that stochastic delay differential
equations, or stochastic functional differential equations with
bounded delay, are a.s. bounded by increasing exponential
functions, provided that the coefficients of the equation satisfy
global linear bounds. More precisely, Mao shows that the top
Lyapunov exponent is bounded almost surely by a finite constant.
For the stochastic pantograph equation, we similarly show that all
solutions have top a.s. and $p$-th mean ($p=1,2$) Lyapunov
exponents which are bounded by finite constants. We consider only
parameter regions in which the polynomial boundedness of the
solution of (\ref{stochpanto}) has not been established, as the
a.s. exponential upper bound (respectively, the $p$-th mean
exponential upper bound) is a direct consequence of the a.s.
polynomial boundedness (respectively, $p$-th mean polynomial
boundedness) of the solution.

Therefore, when $a>0$ and $\rho=0$, we show that there is an
exponential upper bound on solutions in a first mean and almost
sure sense; when $a<0$ and $\rho=0$, we already know that there is
a polynomial bound in first mean and almost surely. When
$\rho\not=0$, and $2a+\sigma^2>0$, we show that there is an
exponential upper bound on solutions in a mean square and almost
sure sense; when $2a+\sigma^2<0$ and $\rho\not=0$, we have already
established that there is a polynomial mean square and almost sure
bound on solutions. However, because these are upper bounds, we
cannot say whether the only classes of asymptotic behaviour are of
polynomial or exponentially growing type. Nonetheless, these are
the only classes of behaviour exhibited by deterministic
pantograph equations, and, in a later work (where $\rho=0$), we
establish that solutions are either exponentially growing, or
cannot grow (or decay) as fast as any exponential.

In Theorem \ref{theorem:pexp1} below, we obtain an exponential
upper bound on the $p$-th mean ($p=1,2$) using the comparison
principle obtained earlier in this paper, and obtain almost sure
exponential upper bounds by using ideas of Theorem 4.3.1
in~\cite{Maobk2} for stochastic differential equations, which are
developed in Theorem 5.6.2 in~\cite{Mao1} for stochastic equations
with delay.
\begin{theorem}   \label{theorem:pexp1}
Let $(X(t))_{t\geq 0}$ be the process satisfying
(\ref{stochpanto}).
\begin{itemize}
\item[(i)]
Let $\rho=0$, $\mathbb{E}(|X_0|^2)<\infty$. If $a>0$, then
\[
\mathbb{E}(|X(t)|)\leq C\ \mathbb{E}(|X_0|) \ e^{at},
\]
and
\[
\limsup_{t \rightarrow \infty} \frac{1}{t}\log |X(t)| \leq a \quad
\text{a.s.}
\]
\item[(ii)]
Let $\rho\not=0$, $\mathbb{E}[|X_0|^2]<\infty$. If
$2a+\sigma^{2}>0$, then
\[
\mathbb{E}(|X(t)|^2)\leq C\ \mathbb{E}(|X_0|^2) \
e^{(2a+\sigma^2)\;t},
\]
and
\[
\limsup_{t \rightarrow \infty} \frac{1}{t}\log |X(t)| \leq
a+\frac{1}{2}\sigma^{2} \quad \text{a.s.}
\]
\end{itemize}
\end{theorem}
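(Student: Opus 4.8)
The plan is to transcribe the arguments of Theorems~\ref{theorem:pg1} and~\ref{theorem:pg3}, substituting the exponential comparison of Lemma~\ref{lemma:lemp3}(ii) for the polynomial comparison of part~(i), and exponential majorants for polynomial ones in the almost sure step. The structural fact that drives everything is that, by Lemma~\ref{lemma:lemp2}(ii), a pantograph inequality $D^+p\leq\bar a\,p+\bar b\,p(q\cdot)$ with $\bar a>0$ forces the rate $e^{\bar a t}$ \emph{irrespective} of the size of the positive delay coefficient $\bar b$; this is precisely what converts the hypotheses $a>0$ and $2a+\sigma^2>0$ into the exponents $a$ and $2a+\sigma^2$.

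For the $p$-th mean bounds I follow Theorem~\ref{theorem:pg1}. In part~(i) the local-time computation leading to \eqref{eq:eqcomp3} gives $D^+m(t)\leq a\,m(t)+|b|\,m(qt)$ for $m(t)=\mathbb{E}(|X(t)|)$, and since $a>0$ Lemma~\ref{lemma:lemp3}(ii) delivers $m(t)\leq C\,\mathbb{E}(|X_0|)e^{at}$, with exponent exactly $a$. In part~(ii) the It\^o expansion of $X^2$ together with Young's inequality, exactly as in the derivation of \eqref{eq:eqcomp4}, gives for $\widetilde m(t)=\mathbb{E}(X^2(t))$ the inequality $D^+\widetilde m\leq\bar a(\eta)\widetilde m+\bar b(\eta)\widetilde m(q\cdot)$ with $\bar a(\eta)=2a+\sigma^2+\eta^2|b+\sigma\rho|>0$ and $\bar b(\eta)=\rho^2+\eta^{-2}|b+\sigma\rho|>0$, whence Lemma~\ref{lemma:lemp3}(ii) yields $\widetilde m(t)\leq C_\eta\,\mathbb{E}(X_0^2)e^{\bar a(\eta)t}$. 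Since only the inequality direction is needed, passing the expectation through the stochastic integral requires merely $\mathbb{E}(X_0^2)<\infty$: one localises by $\tau_k=\inf\{t:|X(t)|\geq k\}$, notes that the stopped stochastic integral has zero mean, and recovers the integral form of the inequality for $\widetilde m$ by Fatou's lemma, with finiteness and continuity of $\widetilde m$ supplied by Lemma~\ref{lemma:lemg2}.

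For the almost sure statements I would mirror Theorem~\ref{theorem:pg3}. Fixing $\varepsilon>0$ and the mesh $a_n=n\tau$ with $c_2\tau^{1/2}|\sigma|=\tfrac12$, the Burkholder--Davis--Gundy estimate \eqref{bdg} over $[a_n,a_{n+1}]$, used as in \eqref{eq:eqpe3}--\eqref{eq:eqpe4} in part~(i) and as in \eqref{eq:eqpe7} in part~(ii), controls the martingale supremum; feeding in the exponential $p$-th mean bounds and the elementary identity $\int_{a_n}^{a_{n+1}}e^{\bar a s}\,ds=\bar a^{-1}(e^{\bar a\tau}-1)e^{\bar a a_n}$ produces
\[
\mathbb{E}\Big(\sup_{a_n\leq t\leq a_{n+1}}|X(t)|\Big)\leq \tilde C\,e^{a a_n}, \qquad \mathbb{E}\Big(\sup_{a_n\leq t\leq a_{n+1}}X^2(t)\Big)\leq \tilde C\,e^{(2a+\sigma^2)a_n},
\]
with $\tilde C$ independent of $n$. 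Markov's inequality against $e^{-(a+\varepsilon)a_n}$ (respectively $e^{-(2a+\sigma^2+\varepsilon)a_n}$) gives the geometrically summable tail $\tfrac{\tilde C}{\gamma}e^{-\varepsilon\tau n}$, so the first Borel--Cantelli lemma and the mesh-to-continuum passage \eqref{eq:eqpe5}--\eqref{eq:eqpe6} yield $\limsup_{t\to\infty}\tfrac1t\log|X(t)|\leq a+\varepsilon$ in part~(i) and $\leq a+\tfrac12\sigma^2+\tfrac12\varepsilon$ in part~(ii); letting $\varepsilon\downarrow0$ through the rationals closes both cases.

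The main obstacle is securing the \emph{exact} exponent $2a+\sigma^2$ in part~(ii): Young's inequality inflates the self-coefficient to $\bar a(\eta)=2a+\sigma^2+\eta^2|b+\sigma\rho|$, and the constant $C_\eta$ from Lemma~\ref{lemma:lemp3}(ii) blows up as $\eta\downarrow0$ (since $\bar b(\eta)\to\infty$), so no single finite $\eta$ delivers the rate $2a+\sigma^2$ with one constant. For the almost sure conclusion this is harmless---one lets $\eta\downarrow0$ alongside $\varepsilon\downarrow0$---but for the stated mean-square bound I would bypass the comparison lemma and use the integrating factor $u(t)=e^{-(2a+\sigma^2)t}\widetilde m(t)$. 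Estimating the cross term $2(b+\sigma\rho)\mathbb{E}(X(t)X(qt))$ by Cauchy--Schwarz and then Young produces a differential inequality $u'(t)\leq g(t)u(t)+h(t)u(qt)$ in which $g$ and $h$ decay like $e^{-(2a+\sigma^2)(1-q)t/2}$ and are hence integrable on $[0,\infty)$; applying Gr\"onwall's inequality to $U(t)=\sup_{0\leq s\leq t}u(s)$ bounds $u$ by $u(0)\exp\!\big(\int_0^\infty(g+h)\big)<\infty$, which is precisely $\widetilde m(t)\leq C\,\mathbb{E}(X_0^2)e^{(2a+\sigma^2)t}$. A final routine check, as in Theorem~\ref{theorem:pg3}(ii), confirms that the BDG step is legitimate for the diffusion coefficient $\sigma X+\rho X(q\cdot)$ under $\mathbb{E}(X_0^2)<\infty$, since it involves only second moments of $X$.
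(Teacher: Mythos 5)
Your proposal is correct, and for the $p$-th mean estimates it coincides with the paper's argument: the paper likewise recycles the differential inequalities \eqref{eq:eqcomp3} and \eqref{eq:eqcomp4} from Theorem~\ref{theorem:pg1} and feeds them into Lemma~\ref{lemma:lemp3}(ii). The two places where you diverge are both worth noting. First, for the mean-square bound in part~(ii) the paper simply observes that $\eta$ may be taken arbitrarily small, records $\mathbb{E}(|X(t)|^2)\leq C(\varepsilon)e^{(2a+\sigma^2+\varepsilon)t}\mathbb{E}(|X(0)|^2)$ for every $\varepsilon>0$, and declares the result proved; as you correctly point out, $C(\varepsilon)$ is not controlled as $\varepsilon\downarrow 0$, so this only yields the stated inequality with exponent $2a+\sigma^2+\varepsilon$ (which is all that the almost sure conclusion needs). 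Your integrating-factor argument --- Cauchy--Schwarz on the cross term $2(b+\sigma\rho)\mathbb{E}(X(t)X(qt))$, the substitution $u(t)=e^{-(2a+\sigma^2)t}\widetilde m(t)$, integrability of the resulting coefficients because $2a+\sigma^2>0$ and $q<1$, and Gr\"onwall applied to $\sup_{0\leq s\leq t}u(s)$ --- genuinely repairs this and delivers the exact exponent; it is a real improvement over what is on the page. Second, for the almost sure statement the paper does not reuse the mesh $a_n=n\tau$ of Theorem~\ref{theorem:pg3}; instead, following Mao's Theorem 4.3.1, it bounds $\mathbb{E}\bigl(\sup_{0\leq t\leq T}X^2(t)\bigr)$ over the whole interval $[0,T]$ by $C_3(\varepsilon)(T+1)\exp((2|a|+\sigma^2+\varepsilon)T)$ and then applies Chebyshev and Borel--Cantelli, landing on the Lyapunov exponent $|a|+\tfrac12\sigma^2$. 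Your interval-by-interval version works equally well (the tails $\tilde C\gamma^{-1}e^{-\varepsilon\tau n}$ are summable, and the mesh-to-continuum step is unchanged since $a+\varepsilon>0$), and it has the incidental advantage of producing the constant $a+\tfrac12\sigma^2$ actually asserted in the theorem rather than $|a|+\tfrac12\sigma^2$. Your localisation/Fatou remark for passing expectations through the stochastic integral under only $\mathbb{E}(|X_0|^2)<\infty$ addresses a hypothesis mismatch (Theorem~\ref{theorem:pg1}(ii) and Lemma~\ref{lemma:lemg2} assume fourth moments) that the paper leaves implicit.
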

\begin{proof}
The bounds on expectations follow from Lemma \ref{lemma:lemp3}
part (ii). For part (i) (where $\rho=0$) the proof of equation
(\ref{eq:eqcomp3}) in Theorem~\ref{theorem:pg1} part (i), together
with part (ii) of Lemma~\ref{lemma:lemp3} gives
\[
\mathbb{E}(|X(t)|)=m(t)\leq C\;m(0)\;e^{at} =
C\;\mathbb{E}(|X_0|)\;e^{at}
\]
for some $C>0$.

For part (ii) (where $\rho\not=0$) the proof of (\ref{eq:eqcomp4})
in Theorem~\ref{theorem:pg1} part (ii) gives
\[
\mathbb{E}(|X(t)|^2)=\widetilde{m}(t) \leq
C\;\widetilde{m}(0)\;e^{\bar{a}t} =
C\;\mathbb{E}(|X(0)|^2)\;e^{\bar{a}t}, \quad t\geq 0
\]
for some $C>0$, where $\bar{a}=2a +\sigma^2 +
|b+\sigma\rho|\eta^{2}$. As $\eta$ can be chosen arbitrarily
small, we have
\begin{equation}   \label{eq:eqexp2}
\mathbb{E}(|X(t)|^2)\leq C(\varepsilon)\;e^{(2a
+\sigma^2+\varepsilon)t} \; \mathbb{E}(|X(0)|^2)
\end{equation}
for every $\varepsilon>0$. This gives the desired result.

This analysis now enables us to obtain an exponential upper bound
on
\[
t \mapsto \mathbb{E} ( \sup_{0\leq s \leq t} X^2(s)).
\]
The proof follows the idea of the proof of Theorem 4.3.1 in Mao
\cite{Maobk2}, and related results in Mao \cite{Mao1}. It also is
similar to earlier results in this paper, so we present an outline
for part (ii) only.

Using the inequality $(a+b+c)^2 \leq 3(a^2 + b^2 + c^2)$, we get
\begin{multline*}
X^2(t) \leq 3 \; X^2(0) + 3\  \left(\int_0^t \{a X(s) + b X(qs)\} \,ds
\right)^2
\\ + 3 \ \left(\int_0^t \{\sigma X(s) + \rho X(qs)\}\,dB(s)\right)^2 ,
\end{multline*}
so using the Cauchy-Schwarz inequality, taking suprema, and then
expectations on both sides of the inequality, we arrive at
\begin{multline*}
\mathbb{E}(\sup_{0\leq t \leq T} X^2(t))
 \leq 3 \  \mathbb{E}( X^2(0)) + 3\ T\int_0^T
\mathbb{E}(|a X(s)
+ b X(qs)|)^{2} \,ds \\
 + 3 \ \mathbb{E} \left[\sup_{0\leq t \leq T}
\left(\int_0^t \{\sigma X(s)+ \rho X(qs)\}\,dB(s)\right)^2\right] ,
\end{multline*}
for any $T>0$. The second term on the right-hand side can be
bounded using the inequality $(a+b)^2 \leq 2(a^2 +b^2)$. The third
term can be bounded by the Burkholder-Davis-Gundy inequality.
Therefore, by (\ref{bdg}), there exists a $T$-independent constant
$c_2>0$ such that
\begin{multline*}
\mathbb{E}(\sup_{0\leq t \leq T} X^2(t)) \leq 3 \ \mathbb{E}
(X^2(0)) + 3 \ T\ \int_0^T \{2a^2 \mathbb{E} |X(s)|^2
 + 2b^2 \mathbb{E} |X(qs)|^2\} \,ds  \\
 + 3 \ c_2 \int_0^T \{2\sigma^2 \mathbb{E} |X(s)|^2
 + 2 \rho^2 \mathbb{E} |X(qs)|^2\}\,ds .
\end{multline*}
Noting that $q\in(0,1)$, we now appeal to (\ref{eq:eqexp2}) to
show for each fixed $\varepsilon >0$ that there exists a
$C_3(\varepsilon)>0$ such that
\[
\mathbb{E}(\sup_{0\leq t \leq T} X^2(t)) \leq
C_3(\varepsilon)\;(T+1)\; \exp\left(( 2 |a|+\sigma^2+\varepsilon)
T \right).
\]
Let $\lambda= |a|+ \frac{1}{2}\sigma^2$, so
that for each fixed $\varepsilon >0$ we have
\[
\mathbb{E} \left[ \left(\frac{\sup_{0\leq s \leq t} |X(s)|}
{e^{(\lambda+\varepsilon)t}}\right)^2 \right] \leq
C_3(\varepsilon)\;(t+1)\; \exp( -\varepsilon t).
\]
Using Chebyshev's inequality and the first Borel-Cantelli Lemma,
we see that for every fixed $\varepsilon\in (0,1)$ there is an almost 
sure event $\Omega_\varepsilon$ such that
\[
\limsup_{t \rightarrow \infty} |X(t,\omega)|\;{e^{-(\lambda+\varepsilon)t}} 
\leq e^{\lambda+1}, \quad \text{for $\omega\in \Omega_\varepsilon$.}
\]
Therefore for $\omega\in \Omega_{\varepsilon}$
\[
\limsup_{t \rightarrow \infty} \frac{1}{t} \log|X(t,\omega)| \leq \lambda
+ \varepsilon.
\]
Let $\Omega^{\ast} = \cap_{n\in\mathbb{N}} \Omega_{1/n}$. Then 
$\mathbb{P}[\Omega^\ast]=1$ and for all $\omega\in\Omega^\ast$ and all
$n\in \mathbb{N}$, we have
\[
\limsup_{t\rightarrow\infty} \frac{1}{t}\log |X(t,\omega)| 
\leq \lambda+\frac{1}{n}.
\]
Hence 
\[
\limsup_{t\rightarrow\infty} \frac{1}{t}\log |X(t,\omega)| \leq \lambda
\]
for all $\omega\in \Omega^\ast$, as required.
\end{proof}

\section{Generalisations of the scalar stochastic pantograph equation}
In this section, we consider some generalisations of the scalar
stochastic pantograph equation, and show, with some restrictions
and additional analysis, that our stability and asymptotic
analysis extends to cover these more general problems.

In the first subsection, we consider the scalar stochastic
pantograph equation with arbitrarily many proportional delays
\begin{equation}  \label{eq:eq71}
dX(t)=\left(aX(t)+ \sum_{i=1}^{n} b_{i}X(q_{i}t)\right)\,dt +
\left(\sigma X(t) + \sum_{i=1}^{m}
\sigma_{i}X(r_{i}t)\right)\,dB(t).
\end{equation}
where $q_{i}\in (0,1)$ for $i=1,\ldots,n$ and $r_i \in(0,1)$ for
$i=1,\ldots,m$ are sets of distinct real numbers, $a,\sigma\in
\mathbb{R}$, $b_{i}\in \mathbb{R}$ for $i=1,\ldots,n$, and
$\sigma_i\in \mathbb{R}$ for $i=1,\ldots,m$. We assume $(B(t))_{t
\geq 0}$ is a standard one-dimensional Brownian motion. The
equation (\ref{eq:eq71}) is a stochastic version of the pantograph
equation
\begin{equation}    \label{eq:eq72}
x'(t)=ax(t)+\sum_{i=1}^{\infty} b_i x(q_{i}t), \quad t \geq 0
\end{equation}
studied by Liu~\cite{Liu}. The asymptotic behaviour of
(\ref{eq:eq72}) as $t\rightarrow \infty$ bears many similarities
to that of (\ref{detpanto}), and we will use results about the
asymptotic behaviour of (\ref{eq:eq72}) in order to obtain
estimates on the asymptotic behaviour of (\ref{eq:eq71}).

In the second subsection, we study the finite dimensional
stochastic pantograph equation
\begin{equation}    \label{eq:eq73}
dX(t) = \left(AX(t)+BX(qt)\right)\,dt + \left(\Sigma X(t)+ \Theta
X(qt)\right)\,dB(t),
\end{equation}
where $A$, $B$, $\Sigma$, $\Theta \in M_{d,d}(\mathbb{R})$, the
set of all square $d\times d$ matrices with real entries. To
guarantee a unique solution, we must specify $\mathbb{E} (\|X_0\|^2)<
\infty$. Here $\|x\|$ denotes the Euclidean norm for
$x\in\mathbb{R}^{d}$. We denote by $\langle x, y\rangle$ the
standard inner product on for $x,y \in\mathbb{R}^d$, and, with a
slight abuse of notation, $\|C\|$ for the matrix norm of $C  \in
M_{d,d}(\mathbb{R})$, where the norm is induced from the standard
Euclidean norm. Equation (\ref{eq:eq73}) is a stochastic
complement to the finite dimensional deterministic pantograph
equation studied by Carr and Dyson~\cite{CarrD2}.

\subsection{Scalar equation with many delays}
We need some preliminary results in order to obtain asymptotic
estimates on (\ref{eq:eq71}).

Consider the equation
\begin{equation}    \label{eq:eq74}
x'(t)=\tilde{a}x(t)+\sum_{i=1}^{n} |\tilde{b}_{i}|x(p_{i}t),
\end{equation}
where $\tilde{a}<0$ and $\tilde{b}_{i}\in \mathbb{R}$. Define the
set
\begin{equation}    \label{eq:eq75}
\tilde{E}=\{z\in\mathbb{C}\::\: \tilde{a}+
\sum_{i=1}^{n}|\tilde{b}_{i}|p_{i}^{z}=0\},
\end{equation}
and
\begin{equation}     \label{eq:eq76}
\tilde{\alpha} = \sup\{\Re e(z)\::\: z\in \tilde{E}\}.
\end{equation}
Then
\begin{lemma}    \label{lemma:l71}
Suppose that at least one $|\tilde{b}_i|\neq 0$ in
(\ref{eq:eq74}). Then, there exists $\alpha\in \tilde{E}\cap
\mathbb{R}$ such that $\alpha=\tilde{\alpha}$. Moreover,
$\alpha<0$ if and only if
\begin{equation}    \label{eq:eq77}
\tilde{a}+ \sum_{i=1}^{n}|\tilde{b}_{i}|<0,
\end{equation}
and $\alpha<-1$ if and only if
\begin{equation}     \label{eq:eq78}
\tilde{a}+ \sum_{i=1}^{n} \frac{1}{p_i}|\tilde{b}_{i}| < 0.
\end{equation}
\end{lemma}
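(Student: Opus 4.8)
The lemma states properties about a function defined via a set in the complex plane.Let me analyze this lemma about the characteristic function for the pantograph equation with multiple delays.

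The claim is: Define $g(z) = \tilde{a} + \sum_{i=1}^n |\tilde{b}_i| p_i^z$ where $\tilde{a} < 0$, each $p_i \in (0,1)$. Then:
1. There exists a real $\alpha \in \tilde{E} \cap \mathbb{R}$ with $\alpha = \tilde{\alpha}$ (the sup of real parts over all complex roots)
2. $\alpha < 0 \iff \tilde{a} + \sum |\tilde{b}_i| < 0$
3. $\alpha < -1 \iff \tilde{a} + \sum \frac{1}{p_i}|\tilde{b}_i| < 0$My plan is to analyze the real-valued function $f(x) = \tilde{a} + \sum_{i=1}^{n} |\tilde{b}_i| p_i^x$ for $x \in \mathbb{R}$ and show it has a unique real zero $\alpha$, then argue that this $\alpha$ equals $\tilde{\alpha}$, the supremum of real parts over all complex roots in $\tilde{E}$. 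First I would establish the existence and uniqueness of a real root of $f$. Since each $p_i \in (0,1)$, each term $x \mapsto |\tilde{b}_i| p_i^x$ is non-increasing (strictly decreasing for those $i$ with $\tilde{b}_i \neq 0$), so $f$ is continuous and strictly decreasing (using the hypothesis that at least one $|\tilde{b}_i| \neq 0$). As $x \to -\infty$, the dominant term $p_i^x \to +\infty$, so $f(x) \to +\infty$; as $x \to +\infty$, each $p_i^x \to 0$, so $f(x) \to \tilde{a} < 0$. By the intermediate value theorem, $f$ has exactly one real zero $\alpha$, establishing $\alpha \in \tilde{E} \cap \mathbb{R}$.

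Next I would show $\alpha = \tilde{\alpha}$, i.e. that no complex root has real part exceeding $\alpha$. The key estimate is that for any $z = u + iv \in \tilde{E}$, we have $|p_i^z| = p_i^{\Re e(z)} = p_i^u$. From $\tilde{a} + \sum_i |\tilde{b}_i| p_i^z = 0$ we get $\tilde{a} = -\sum_i |\tilde{b}_i| p_i^z$, and taking absolute values (or comparing real parts) yields $|\tilde{a}| = \left| \sum_i |\tilde{b}_i| p_i^z \right| \leq \sum_i |\tilde{b}_i| p_i^u$. Since $\tilde{a} < 0$ this reads $-\tilde{a} \leq \sum_i |\tilde{b}_i| p_i^u$, equivalently $f(u) \geq 0$. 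Because $f$ is strictly decreasing with $f(\alpha) = 0$, this forces $u \leq \alpha$. Hence every root has real part at most $\alpha$, so $\tilde{\alpha} = \alpha$. This confirms the first assertion.

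For the two equivalences, I would exploit the strict monotonicity of $f$ directly, since it translates sign conditions at specific points into the location of $\alpha$ relative to $0$ and $-1$. Observe that $f(0) = \tilde{a} + \sum_i |\tilde{b}_i|$ and $f(-1) = \tilde{a} + \sum_i \frac{1}{p_i}|\tilde{b}_i|$ (using $p_i^{-1} = 1/p_i$). Since $f$ is strictly decreasing with unique zero at $\alpha$, we have $\alpha < 0 \iff f(0) < 0$ and $\alpha < -1 \iff f(-1) < 0$; these are exactly conditions \eqref{eq:eq77} and \eqref{eq:eq78}. The main obstacle is the complex-root bound in the second paragraph: one must be careful that the triangle inequality is applied correctly to the complex sum and that the inequality is not strict in a way that breaks the boundary case. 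The delicate point is ensuring that when $u = \alpha$ exactly, a genuine real root exists (which it does, namely $z = \alpha$), so the supremum $\tilde{\alpha}$ is attained and equals $\alpha$ rather than merely bounding it. Everything else reduces to the elementary monotonicity of a sum of decreasing exponentials.
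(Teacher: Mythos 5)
Your proposal is correct and follows essentially the same route as the paper: both establish the unique real zero $\alpha$ of the strictly decreasing function $f(x)=\tilde{a}+\sum_i|\tilde{b}_i|p_i^x$ via its limits at $\pm\infty$, read off the two equivalences from the signs of $f(0)$ and $f(-1)$, and show $\tilde{\alpha}=\alpha$ by bounding $f(\Re e(z))\geq 0$ for any complex root $z$ (the paper via $\Re e(f(z))\leq f(\Re e(z))$ using $\cos\leq 1$, you via the triangle inequality on the modulus — the same estimate in two guises).
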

\begin{proof}
Let $f(z)=a+\sum_{i=1}^{n} |\tilde{b}_i| p_i^{z}$. On
$\mathbb{R}$, $f$ is decreasing and continuous, so as
$\lim_{z\rightarrow -\infty} f(z) = \infty$, and
$\lim_{z\rightarrow \infty} f(z)=a<0$, $f$ has a unique real zero
at $\alpha$. Clearly, $\alpha<0$ if and only if (\ref{eq:eq77})
holds, and $\alpha<-1$ if and only if (\ref{eq:eq78}) is true.

To show that $\alpha$ is greater than or equal to the real part of
any other member of $\tilde{E}$, consider $z\in \tilde{E}$ with
$z=z_1+i z_2$. Then
\[
\Re e (f(z)) = a + \sum_{i=1}^{n}
|\tilde{b}_i|p_i^{z_1}\cos(z_2\log p_i) \leq f(z_1).
\]
If $z_1>\alpha$, then $0=\Re e(f(z)) = f(z_1)<f(\alpha)=0$, a
contradiction. Hence $\tilde{\alpha}=\alpha$.
\end{proof}

The relevance of $\tilde{\alpha}$ defined by (\ref{eq:eq76}) is
explained by Theorem 2.14 in~\cite{Liu}. There, it is shown that
every solution of (\ref{eq:eq74}) satisfies
\begin{equation}    \label{eq:eqliuest}
x(t) = \mathcal{O}(t^{\beta}), \quad \mbox{as $t\rightarrow
\infty$}
\end{equation}
for any $\beta>\tilde{\alpha}$, where $\tilde{\alpha}$ is defined
by (\ref{eq:eq76}). As a consequence of Lemma \ref{lemma:l71} and
(\ref{eq:eqliuest}), we obtain the following result.
\begin{lemma}    \label{lemma:l72}
Let $\alpha$ be the real member of $\tilde{E}$ defined by
(\ref{eq:eq75}), and $x$ be any solution of (\ref{eq:eq74}). 
Then we have
\begin{equation}    \label{eq:eq79}
\limsup_{t \rightarrow \infty} \frac{\log|x(t)|}{\log t}\leq
\alpha.
\end{equation}
\end{lemma}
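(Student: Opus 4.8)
The plan is to combine the two ingredients that the text has just assembled: Lemma~\ref{lemma:l71}, which guarantees that the real zero $\alpha$ of $f(z)=\tilde a+\sum_i|\tilde b_i|p_i^{z}$ coincides with $\tilde\alpha=\sup\{\Re e(z):z\in\tilde E\}$, and the estimate \eqref{eq:eqliuest} of Liu, which says that every solution of \eqref{eq:eq74} is $\mathcal{O}(t^\beta)$ for any $\beta>\tilde\alpha$. Since by Lemma~\ref{lemma:l71} we have $\tilde\alpha=\alpha$, the growth estimate reads $x(t)=\mathcal{O}(t^\beta)$ for every $\beta>\alpha$. The goal \eqref{eq:eq79} is simply the translation of this family of polynomial bounds into a statement about $\limsup_{t\to\infty}\log|x(t)|/\log t$.

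The key step is to fix an arbitrary $\beta>\alpha$ and exploit the $\mathcal{O}$-bound. By definition of \eqref{eq:eqliuest}, there exist a constant $K>0$ and a time $T>0$ (both depending on $\beta$) such that $|x(t)|\leq K t^{\beta}$ for all $t\geq T$. Taking logarithms, for $t$ large enough that $\log t>0$ we obtain
\[
\frac{\log|x(t)|}{\log t}\leq \frac{\log K}{\log t}+\beta.
\]
Letting $t\to\infty$ the first term vanishes, so $\limsup_{t\to\infty}\log|x(t)|/\log t\leq\beta$. Since $\beta>\alpha$ was arbitrary, I would then let $\beta\downarrow\alpha$ (through a sequence $\beta_n\to\alpha^+$, say $\beta_n=\alpha+1/n$) to conclude that the $\limsup$ is at most $\alpha$, which is exactly \eqref{eq:eq79}.

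A minor point to handle cleanly is the case where $x$ is eventually zero or has zeros, so that $\log|x(t)|$ is $-\infty$ at isolated points; this causes no difficulty because the $\limsup$ is unaffected by such values (they only push the quotient toward $-\infty$), and if $x\equiv 0$ eventually the $\limsup$ is $-\infty\leq\alpha$ trivially. I do not anticipate a genuine obstacle here: the lemma is essentially a bookkeeping corollary that repackages Liu's polynomial growth order and the identification $\tilde\alpha=\alpha$ from Lemma~\ref{lemma:l71} into the logarithmic-ratio form used throughout the paper. The only thing requiring care is to invoke \eqref{eq:eqliuest} for each $\beta>\alpha$ separately rather than at $\beta=\alpha$ itself, since the $\mathcal{O}$-estimate is only asserted for exponents strictly exceeding $\tilde\alpha$.
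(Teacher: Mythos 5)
Your proposal is correct and follows exactly the route the paper intends: it derives Lemma~\ref{lemma:l72} as an immediate consequence of the identification $\tilde{\alpha}=\alpha$ from Lemma~\ref{lemma:l71} together with Liu's estimate \eqref{eq:eqliuest}, then converts the family of bounds $x(t)=\mathcal{O}(t^\beta)$, $\beta>\alpha$, into the logarithmic-ratio form by letting $\beta\downarrow\alpha$. The paper states the lemma without writing out these details, so your write-up simply makes explicit the bookkeeping the authors leave implicit.
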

We use these results to obtain asymptotic estimates on a
parameterised family of equations of the form (\ref{eq:eq74}).

Towards this end, let $(\nu_{i})_{i=1,\ldots,n}$,
$(\mu_{i})_{i=1,\ldots,m}$, $(\lambda_{i})_{i=1,\ldots,m}$ be as
yet unspecified sequences of positive real numbers, and define the
equation
\begin{multline}      \label{eq:eq7A}
y'(t) = \left(2a+\sigma^2+\sum_{i=1}^{n} |b_i|{\nu_i}^{2} +
\sum_{j=1}^{m} |\sigma||\sigma_j|{\lambda_{j}}^2\right)y(t)
+\sum_{i=1}^{n} |b_{i}|\frac{1}{{\nu_i}^2}y(q_{i}t) \\
+\sum_{l=1}^{m}
{\sigma_{l}}^2{\mu_{l}}^{-2}\cdot\sum_{j=1}^{m}{\mu_{j}}^{2}y(r_jt)
+\sum_{j=1}^{m} |\sigma||\sigma_j|\frac{1}{{\lambda_j}^2}y(r_jt).
\end{multline}
We will write $y$ above as $y_{\nu,\lambda,\mu}$.
\begin{lemma}     \label{lemma:l7A}
Suppose that $y=y_{\nu,\lambda,\mu}$ is a solution of
(\ref{eq:eq7A}). Then each of the following is true:
\begin{itemize}
\item[(i)] If
\begin{equation}     \label{eq:eq7B}
2a+\sigma^2<0,
\end{equation}
there exists
$(\nu,\lambda,\mu)=(\nu^{\ast},\lambda^{\ast},\mu^{\ast})$ and
$\alpha\in \mathbb{R}$ such that
\begin{equation}     \label{eq:eq7C}
\limsup_{t \rightarrow\infty} \frac{\log
|y_{\nu^{\ast},\lambda^{\ast},\mu^{\ast}}(t)|}{\log t} \leq
\alpha.
\end{equation}
\item[(ii)] If
\begin{equation}     \label{eq:eq7D}
2a+2\sum_{i=1}^{n} |b_i| +(|\sigma|+\sum_{j=1}^{m}|\sigma_j|)^2<0,
\end{equation}
there exists
$(\nu,\lambda,\mu)=(\nu^{\ast},\lambda^{\ast},\mu^{\ast})$ and
$\alpha<0$ such that (\ref{eq:eq7C}) holds.
\item[(iii)] If
\begin{equation}      \label{eq:eq7E}
2a+ 2 \sum_{i=1}^{n} \frac{|b_i|}{\sqrt{q_i}}
+\left(|\sigma|+\sum_{j=1}^{m}
\frac{|\sigma_j|}{\sqrt{r_j}}\right)^2<0,
\end{equation}
there exists
$(\nu,\lambda,\mu)=(\nu^{\ast},\lambda^{\ast},\mu^{\ast})$ and
$\alpha<-1$ such that (\ref{eq:eq7C}) holds.
\end{itemize}
\end{lemma}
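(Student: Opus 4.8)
The plan is to recognise (\ref{eq:eq7A}) as an instance of the deterministic equation (\ref{eq:eq74}) and then to quote Lemmas \ref{lemma:l71} and \ref{lemma:l72}. Writing (\ref{eq:eq7A}) as $y'(t) = \tilde a\, y(t) + \sum_i |\tilde b_i|\, y(p_i t)$, the coefficient of $y(t)$ is $\tilde a = 2a + \sigma^2 + \sum_i |b_i|\nu_i^2 + \sum_j |\sigma||\sigma_j|\lambda_j^2$, the coefficient of $y(q_i t)$ is $|b_i|/\nu_i^2$, and the coefficient of $y(r_j t)$ is $\mu_j^2 \sum_l \sigma_l^2/\mu_l^2 + |\sigma||\sigma_j|/\lambda_j^2$. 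Observe that $\mu$ enters only through the delayed terms, not through $\tilde a$. If every delayed coefficient vanishes the claims are trivial (the equation reduces to $y' = \tilde a y$), so I assume otherwise and let $\alpha$ denote the unique real element of $\tilde E$ supplied by Lemma \ref{lemma:l71} whenever $\tilde a < 0$.

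For part (i), the hypothesis $2a + \sigma^2 < 0$ allows me to pick $\nu^{\ast}, \lambda^{\ast}$ small (and, say, $\mu^{\ast} \equiv 1$) so that the nonnegative increments $\sum_i |b_i|\nu_i^2$ and $\sum_j |\sigma||\sigma_j|\lambda_j^2$ leave $\tilde a < 0$. Lemma \ref{lemma:l72} then yields (\ref{eq:eq7C}) directly, with $\alpha$ the associated real exponent.

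Parts (ii) and (iii) rest on the sharp criteria of Lemma \ref{lemma:l71}: $\alpha < 0$ iff $\tilde a + \sum_i |\tilde b_i| < 0$, and $\alpha < -1$ iff $\tilde a + \sum_i p_i^{-1}|\tilde b_i| < 0$; since the delayed coefficients are nonnegative, negativity of either full sum already forces $\tilde a < 0$. I therefore minimise these sums over positive $\nu, \lambda, \mu$. For part (ii), collecting terms gives
\[
\tilde a + \sum_i |\tilde b_i| = 2a + \sigma^2 + \sum_i |b_i|\Bigl(\nu_i^2 + \tfrac{1}{\nu_i^2}\Bigr) + \sum_j |\sigma||\sigma_j|\Bigl(\lambda_j^2 + \tfrac{1}{\lambda_j^2}\Bigr) + \Bigl(\sum_j \mu_j^2\Bigr)\Bigl(\sum_l \tfrac{\sigma_l^2}{\mu_l^2}\Bigr).
\]
The $\nu_i$- and $\lambda_j$-terms are minimised at $\nu_i = \lambda_j = 1$ by the arithmetic--geometric mean inequality (each contributing a factor $2$), and the coupled $\mu$-term is minimised via Cauchy--Schwarz to $\bigl(\sum_j |\sigma_j|\bigr)^2$, attained when $\mu_j^2 \propto |\sigma_j|$. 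Completing the square in $|\sigma|$ identifies the infimum with the left-hand side of (\ref{eq:eq7D}); when this is negative, suitable parameters make $\tilde a + \sum_i |\tilde b_i| < 0$, so $\alpha < 0$ and (\ref{eq:eq7C}) follows from Lemma \ref{lemma:l72}. Part (iii) is structurally identical, except that the delays now weight the coefficients: the arithmetic--geometric mean inequality gives $\nu_i^2 + q_i^{-1}\nu_i^{-2} \ge 2/\sqrt{q_i}$ and $\lambda_j^2 + r_j^{-1}\lambda_j^{-2} \ge 2/\sqrt{r_j}$, and Cauchy--Schwarz on $\bigl(\sum_j r_j^{-1}\mu_j^2\bigr)\bigl(\sum_l \sigma_l^2/\mu_l^2\bigr)$ gives $\bigl(\sum_j |\sigma_j|/\sqrt{r_j}\bigr)^2$; completing the square reproduces the left-hand side of (\ref{eq:eq7E}) and forces $\alpha < -1$.

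The main obstacle is the joint optimisation in parts (ii) and (iii). The $\nu$- and $\lambda$-terms decouple index by index and yield to the arithmetic--geometric mean inequality, but the $\mu$-term couples all indices through a product $\bigl(\sum_j w_j \mu_j^2\bigr)\bigl(\sum_l \sigma_l^2/\mu_l^2\bigr)$ with $w_j \in \{1, r_j^{-1}\}$; it is precisely the Cauchy--Schwarz bound here, followed by the completion of the square in $|\sigma|$, that reproduces the exact thresholds (\ref{eq:eq7D}) and (\ref{eq:eq7E}). A minor point to dispatch is the degenerate case where some $\sigma_j = 0$, so that the optimiser $\mu_j^2 \propto |\sigma_j|$ collapses; this is handled by working with the infimum, which is still approached and still lies strictly below the threshold.
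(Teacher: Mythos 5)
Your proposal is correct and follows essentially the same route as the paper: identify (\ref{eq:eq7A}) as an instance of (\ref{eq:eq74}), take $\nu,\lambda$ small for part (i), and for parts (ii) and (iii) apply the criteria (\ref{eq:eq77}) and (\ref{eq:eq78}) of Lemma \ref{lemma:l71} together with Lemma \ref{lemma:l72}, arriving at exactly the parameter choices $\nu_i^{\ast}=\lambda_j^{\ast}=1$, $(\mu_j^{\ast})^2=|\sigma_j|$ (resp.\ $(\nu_i^{\ast})^2=1/\sqrt{q_i}$, $(\lambda_j^{\ast})^2=1/\sqrt{r_j}$, $(\mu_j^{\ast})^2=|\sigma_j|/\sqrt{r_j}$) that the paper simply writes down and verifies. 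The only difference is that you derive these choices as minimisers via AM--GM and Cauchy--Schwarz, which additionally shows the thresholds (\ref{eq:eq7D}) and (\ref{eq:eq7E}) are the best obtainable by this method; your handling of the degenerate cases (all delayed coefficients zero, or some $\sigma_j=0$) is a small but welcome extra.
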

\begin{proof}
Notice that (\ref{eq:eq7A}) conforms to the form of
(\ref{eq:eq74}). To establish (i), we see by comparing
(\ref{eq:eq7A}) with (\ref{eq:eq74}) that the conclusion of Lemma
\ref{lemma:l72} can be applied to $y=y_{\nu,\lambda,\mu}$ provided
\begin{equation}    \label{eq:eq7F}
2a+\sigma^2+\sum_{i=1}^{n} |b_{i}|{\nu_{i}}^2 +
\sum_{j=1}^{m}|\sigma||\sigma_j|{\lambda_{j}}^2<0.
\end{equation}
By choosing the $\nu$'s and $\lambda$'s arbitrarily small, we see
that (\ref{eq:eq7B}) suffices to establish (\ref{eq:eq7F}), and
thereby (\ref{eq:eq7C}) for $\alpha\in\mathbb{R}$. To prove (ii),
by analogy to (\ref{eq:eq75}) and (\ref{eq:eq76}), we introduce
for $y=y_{\nu,\lambda,\mu}$ the set
\begin{multline}    \label{eq:eq710}
E(\nu,\lambda,\mu) =\Big\{z\in\mathbb{C}\::\: 2a+
\sigma^2+\sum_{i=1}^{n}|b_i|{\nu_{i}}^2
+\sum_{j=1}^{m}|\sigma||\sigma_j|{\lambda_{j}}^2
+\sum_{i=1}^{n}|b_i|\frac{1}{{\nu_i}^2}{q_i}^z \\
+\sum_{l=1}^{m} {\sigma_{l}}^2{\mu_{l}}^{-2}
\cdot\sum_{j=1}^{m}{\mu_{j}}^2{r_{j}}^{z} +\sum_{j=1}^{m}
|\sigma||\sigma_j|\frac{1}{{\lambda_j}^2}{r_{j}}^z=0 \Big\},
\end{multline}
and let
\begin{equation}     \label{eq:eq711}
\alpha(\nu,\lambda,\mu)=\sup\{\Re e(z)\::\: z\in
E(\nu,\lambda,\mu)\}.
\end{equation}
By Lemma \ref{lemma:l71} and (\ref{eq:eq77}), we see that
$\alpha(\nu,\lambda,\nu)<0$ if
\begin{multline}    \label{eq:eq712}
2a+ \sigma^2+\sum_{i=1}^{n}|b_i|
\left({\nu_{i}}^2+\frac{1}{{\nu_i}^2}\right)
+\sum_{j=1}^{m}|\sigma||\sigma_j|
\left({\lambda_{j}}^2+\frac{1}{{\lambda_j}^2}\right)
\\
+\sum_{l=1}^{m} {\sigma_{l}}^2{\mu_{l}}^{-2}
\cdot\sum_{j=1}^{m}{\mu_{j}}^2<0.
\end{multline}
In particular, if (\ref{eq:eq7D}) is true, we see that by choosing
$\nu_{i}^{\ast}=1$, $\lambda_{j}^{\ast}=1$,
$\mu_{j}^{\ast}=\sqrt{|\sigma_{j}|}$ that (\ref{eq:eq712}) is
satisfied, and so by Lemma \ref{lemma:l72}, the estimate
(\ref{eq:eq7C}) holds with
$\alpha=\alpha(\nu^{\ast},\lambda^{\ast},\mu^{\ast})<0$.

Part (iii) follows similarly: Lemma \ref{lemma:l71} and
(\ref{eq:eq78}) ensure that $\alpha(\nu,\lambda,\mu)$ defined by
(\ref{eq:eq711}) satisfies $\alpha(\nu,\lambda,\mu)<-1$ if
\begin{multline}   \label{eq:eq713}
2a+ \sigma^2+\sum_{i=1}^{n}|b_i|
\left({\nu_{i}}^2+\frac{1}{q_i{\nu_i}^2}\right)
+\sum_{j=1}^{m}|\sigma||\sigma_j|
\left({\lambda_{j}}^2+\frac{1}{r_j{\lambda_j}^2}\right)
\\
+\sum_{l=1}^{m} {\sigma_{l}}^2{\mu_{l}}^{-2}
\cdot\sum_{j=1}^{m}{\mu_{j}}^2\frac{1}{r_j}<0.
\end{multline}
In particular, if (\ref{eq:eq7E}) is true, we see that by choosing
\[
(\nu_{i}^{\ast})^2=\frac{1}{\sqrt{q_i}}, \quad
(\lambda_{j}^{\ast})^2=\frac{1}{\sqrt{r_j}}, \quad
(\mu_{j}^{\ast})^2=\frac{|\sigma_j|}{\sqrt{r_j}}
\]
the
inequality (\ref{eq:eq713}) is satisfied, and so by Lemma
\ref{lemma:l72} the estimate (\ref{eq:eq7C}) stands with 
$\alpha=\alpha(\nu^{\ast},\lambda^{\ast},\mu^{\ast})<-1$.
\end{proof}
The following elementary inequality is also important in our
analysis.
\begin{lemma}      \label{lemma:l7B}
Suppose that $\sigma$, $x\in\mathbb{R}$, and
$(x_i)_{i=1,\ldots,m}$, $(\sigma_{i})_{i=1,\ldots,m}$ are any two
real sequences. If $(\lambda_i)_{i=1,\ldots,m}$,
$(\mu_i)_{i=1,\ldots,m}$ are two sequences of positive real
numbers, then
\begin{multline*}
\left(\sigma x + \sum_{i=1}^{m} \sigma_ix_i\right)^2 \leq
\sigma^2x^2 + \sum_{i=1}^{m} |\sigma||\sigma_i|
\left(\lambda_i^2x^2+\frac{1}{\lambda_i^2}x_i^2\right) \\
+\sum_{l=1}^{m} \sigma_l^2\mu_l^{-2} \cdot \sum_{j=1}^{m}
\mu_j^2x_j^2.
\end{multline*}
\end{lemma}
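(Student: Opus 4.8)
The final statement (Lemma~\ref{lemma:l7B}) is an elementary algebraic inequality, so I would prove it by a direct expansion followed by term-by-term estimation using the elementary bound $2uv \le u^2 + v^2$ applied in a scaled form. The left-hand side, when squared, produces three kinds of terms: the pure $\sigma^2 x^2$ term, the cross terms $2\sigma x \sigma_i x_i$ between the distinguished variable and each $x_i$, and the double-sum cross terms $2\sigma_i x_i \sigma_j x_j$ among the $x_i$ themselves (including the pure squares $\sigma_i^2 x_i^2$ when $i=j$). The strategy is to bound each family of cross terms by the corresponding family on the right-hand side, matching the weights $\lambda_i$ and $\mu_i$ to the scalings that appear there.

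\emph{First} I would expand
\[
\left(\sigma x + \sum_{i=1}^m \sigma_i x_i\right)^2
= \sigma^2 x^2 + 2\sigma x \sum_{i=1}^m \sigma_i x_i
+ \left(\sum_{i=1}^m \sigma_i x_i\right)^2.
\]
\emph{Second}, for the middle term I would apply Young's inequality in the weighted form $2|\sigma||\sigma_i||x||x_i| \le |\sigma||\sigma_i|(\lambda_i^2 x^2 + \lambda_i^{-2} x_i^2)$, and sum over $i$; since $\sigma x \sum \sigma_i x_i \le |\sigma||x|\sum|\sigma_i||x_i|$, this produces exactly the $\sum_{i} |\sigma||\sigma_i|(\lambda_i^2 x^2 + \lambda_i^{-2} x_i^2)$ family appearing on the right. \emph{Third}, for the last term I would bound $\left(\sum_{i} \sigma_i x_i\right)^2$ by the Cauchy--Schwarz inequality with the weights $\mu_i$: writing $\sigma_i x_i = (\sigma_i \mu_i^{-1})(\mu_i x_i)$ gives
\[
\left(\sum_{i=1}^m \sigma_i x_i\right)^2 \le \left(\sum_{l=1}^m \sigma_l^2 \mu_l^{-2}\right)\left(\sum_{j=1}^m \mu_j^2 x_j^2\right),
\]
which is precisely the final term on the right-hand side. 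Adding the three estimates yields the claimed bound.

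I do not anticipate a genuine obstacle here, since every step is a standard elementary inequality; the only point requiring care is \textbf{bookkeeping}, namely confirming that the weighted Young step for the cross terms reproduces the coefficient $|\sigma||\sigma_i|$ exactly (rather than, say, $\tfrac12$ of it) and that the double sum in the Cauchy--Schwarz step is correctly indexed with one dummy index $l$ for the $\sigma_l^2 \mu_l^{-2}$ factor and a distinct index $j$ for the $\mu_j^2 x_j^2$ factor, matching the statement verbatim. I would also note that the inequality is used in the sequel to majorise the square of a diffusion coefficient of the form $\sigma X(s) + \sum_i \sigma_i X(r_i s)$ by a sum of squared terms, which is exactly why the weights $\lambda$, $\mu$ are kept free: they will later be optimised in the spirit of Lemma~\ref{lemma:l7A}.
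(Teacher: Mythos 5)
Your proposal is correct and follows essentially the same route as the paper: expand the square, bound the cross terms $2\sigma x\,\sigma_i x_i$ by the weighted Young inequality $2|x||x_i|\leq \lambda_i^2x^2+\lambda_i^{-2}x_i^2$, and bound $\bigl(\sum_i\sigma_ix_i\bigr)^2$ by the weighted Cauchy--Schwarz inequality with weights $\mu_i$. The bookkeeping points you flag (the coefficient $|\sigma||\sigma_i|$ and the separate dummy indices $l$ and $j$) check out exactly as in the paper's proof.
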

\begin{proof}
Using the inequality $2xx_i\leq
\lambda_i^2x^2+\lambda_i^{-2}x_i^2$, we get
\[
\left(\sigma x+\sum_{i=1}^{m} \sigma_ix_i\right)^2 \leq
\sigma^2x^2 + \sum_{i=1}^{m} |\sigma|\sigma_i|
\left(\lambda_i^2x^2+\frac{1}{\lambda_i^2}x_i^2\right) +
\left(\sum_{i=1}^{m} \sigma_ix_i\right)^2.
\]
Applying the Cauchy-Schwarz inequality to the last term gives
\[
\left(\sum_{i=1}^{m} \sigma_ix_i\right)^2 \leq \sum_{i=1}^{m}
\sigma_i^2\mu_i^{-2} \cdot \sum_{i=1}^{m}\mu_i^2x_i^2,
\]
proving the result.
\end{proof}
With these estimates, we can establish sufficient conditions to
ensure the polynomial asymptotic behaviour of solutions of
(\ref{eq:eq71}).

\begin{theorem}  \label{theorem:t71}
Let $(X(t))_{t \geq 0}$ be the solution of (\ref{eq:eq71}) with
$\mathbb{E}(|X_0|^4)< \infty$. Then the following holds:
\begin{itemize}
\item[(i)] Suppose $2a+\sigma^2<0$. Then there exists $C>0$ and
$\alpha\in\mathbb{R}$ such that 
\begin{equation}    \label{eq:eqt71}
\limsup_{t \rightarrow \infty}
\frac{\log\mathbb{E}[|X(t)|^2]}{\log t}\leq \alpha,
\end{equation}
and
\begin{equation}     \label{eq:eqt72}
\limsup_{t\rightarrow \infty} \frac{\log|X(t)|}{\log t}
\leq\frac{1}{2}(\alpha+1), \quad \mbox{a.s.}
\end{equation}
Therefore, the process is polynomially bounded in mean-square and
almost surely polynomially bounded.
\item[(ii)] Suppose (\ref{eq:eq7D}) holds.
Then there exists $\alpha<0$ such that (\ref{eq:eqt71})
holds. Therefore the equilibrium solution of (\ref{eq:eq71}) is
polynomially stable in mean-square.
\item[(iii)] Suppose (\ref{eq:eq7E}) holds.
Then there exists $\alpha<-1$ such that (\ref{eq:eqt71}) and
(\ref{eq:eqt72}) hold. Therefore the equilibrium solution of
(\ref{eq:eq71}) is polynomially stable in mean-square, and a.s.
polynomially stable.
\end{itemize}
\end{theorem}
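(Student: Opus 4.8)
The plan is to mirror the scalar argument from Theorems~\ref{theorem:pg1}--\ref{theorem:pg3}, replacing the single deterministic comparison equation (\ref{detpanto}) by the multi-delay equation (\ref{eq:eq74}) and invoking Lemma~\ref{lemma:l7A} in place of Lemma~\ref{lemma:lemp3}. First I would apply It\^o's rule to $Y(t)=X^2(t)$, where $X$ solves (\ref{eq:eq71}), to obtain a semimartingale decomposition analogous to (\ref{eq:eqg6}): the drift contains $2aY(s)$, the cross-terms $2b_i X(s)X(q_i s)$, and the squared diffusion $\bigl(\sigma X(s)+\sum_j \sigma_j X(r_j s)\bigr)^2$. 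The crucial algebraic step is to bound that squared diffusion term using Lemma~\ref{lemma:l7B} with $x=X(s)$ and $x_i=X(r_i s)$, and to bound each cross-term $2b_i X(s)X(q_i s)\leq |b_i|(\nu_i^2 X^2(s)+\nu_i^{-2}X^2(q_i s))$ by Young's inequality. Taking expectations and writing $\widetilde m(t)=\mathbb{E}[Y(t)]$, the local-martingale term vanishes (justified exactly as in (\ref{eq:eqg7}), using the $L^4$-integrability guaranteed by $\mathbb{E}(|X_0|^4)<\infty$ together with the moment bounds of the relevant lemma), yielding the Dini differential inequality
\[
D^+\widetilde m(t)\leq \Big(2a+\sigma^2+\sum_i |b_i|\nu_i^2+\sum_j|\sigma||\sigma_j|\lambda_j^2\Big)\widetilde m(t)+\sum_i |b_i|\nu_i^{-2}\widetilde m(q_i t)+\cdots,
\]
whose right-hand side coincides precisely with the coefficients of (\ref{eq:eq7A}).

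Having produced this inequality, the next step is a comparison argument. One needs a multi-delay analogue of Lemma~\ref{lemma:lemp1} and Lemma~\ref{lemma:lemp3}: a continuous nonnegative $\widetilde m$ satisfying the Dini inequality above is dominated by a solution $y_{\nu,\lambda,\mu}$ of (\ref{eq:eq7A}). I would either prove this comparison principle directly (the Dini-derivative argument of Lemma~\ref{lemma:lemp1} generalises verbatim once all delay coefficients are nonnegative, which they are by construction) or cite it as the natural extension of the appendix result. Then Lemma~\ref{lemma:l7A}(i) supplies, under $2a+\sigma^2<0$, a choice $(\nu^\ast,\lambda^\ast,\mu^\ast)$ and an $\alpha\in\mathbb{R}$ with $\limsup_{t\to\infty}\log|y(t)|/\log t\leq\alpha$; combined with the comparison bound $\widetilde m(t)\leq y(t)$ this gives (\ref{eq:eqt71}). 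For part~(ii), condition (\ref{eq:eq7D}) feeds into Lemma~\ref{lemma:l7A}(ii) to force $\alpha<0$; for part~(iii), (\ref{eq:eq7E}) feeds into Lemma~\ref{lemma:l7A}(iii) to force $\alpha<-1$.

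The almost-sure estimate (\ref{eq:eqt72}) is then obtained exactly as in the proof of Theorem~\ref{theorem:pg3}(ii). I would partition $[0,\infty)$ into intervals $[a_n,a_{n+1}]$ with mesh chosen so that the Burkholder--Davis--Gundy constant is absorbed as in (\ref{EQ4}), write the increment of $X$ over $[a_n,t]$, take suprema and expectations, and use (\ref{bdg}) together with the polynomial mean-square bound $\mathbb{E}(X^2(t))\leq C^\ast t^\alpha$ just established. The cross-terms $\sum_i b_i X(q_i s)$ and the noise terms $\sum_j \sigma_j X(r_j s)$ are handled termwise precisely as the single $b$ and $\rho$ terms were, using $q_i,r_j\in(0,1)$ to keep every estimate of the form $n^\alpha$. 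A Markov-inequality and Borel--Cantelli argument then yields $\limsup_n \sup_{a_n\leq t\leq a_{n+1}}|X(t)|\,n^{\lambda}=0$ with $\lambda=-(\alpha+1+\varepsilon)$, from which (\ref{eq:eqt72}) follows upon letting $\varepsilon\downarrow0$. The main obstacle is bookkeeping rather than conceptual: one must verify that the optimisation over $(\nu,\lambda,\mu)$ is consistent between the sign condition guaranteeing the $p$-th mean bound and the decay threshold $\alpha<-1$, so that the \emph{same} parameter choice delivers both (\ref{eq:eqt71}) and (\ref{eq:eqt72}); Lemma~\ref{lemma:l7A} is designed precisely so that the distinguished choices $(\nu^\ast,\lambda^\ast,\mu^\ast)$ in its three parts respect this consistency, so the difficulty is confined to carefully tracking which lemma-part applies in each case.
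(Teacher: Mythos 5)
Your proposal follows the paper's proof essentially step for step: Itô's rule on $X^2$, Young's inequality plus Lemma~\ref{lemma:l7B} to reduce the drift of $\mathbb{E}[X^2]$ to the form of (\ref{eq:eq7A}), the multi-delay comparison principle to dominate $\widetilde m$ by $y_{\nu,\lambda,\mu}$, Lemma~\ref{lemma:l7A}(i)--(iii) for the three parameter regimes, and the Borel--Cantelli argument of Theorem~\ref{theorem:pg3} for the almost-sure bound (which the paper likewise only sketches by reference). No substantive differences; the proposal is correct.
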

\begin{proof}
Define $Y(t)=X^{2}(t)$. It\^{o}'s rule gives
\begin{multline}    \label{eq:eq714}
X^{2}(t)=X^{2}(0) +
\int_0^t 2X(s)\left(aX(s)+\sum_{i=1}^{n}b_iX(q_is)\right)\,ds \\
+ \int_{0}^{t}
\left(\sigma X(s)+ \sum_{i=1}^{m} \sigma_iX(r_is)\right)^2\,ds \\
+\int_{0}^{t} 2X(s) \left(\sigma X(s) + \sum_{i=1}^{m}
\sigma_iX(r_is)\right)\,dB(s).
\end{multline}
Let $(\nu_i)_{i=1,\ldots,n}$, $(\lambda_i)_{i=1,\ldots,m}$ be
sequences of positive real numbers. By using the inequalities
\[
2b_ixy\leq|b_i|\left(\nu_i^2x^2+\frac{1}{\nu_i^2}y^2\right), \quad
2\sigma\sigma_ixy\leq|\sigma||\sigma_i|
\left(\lambda_i^2x^2+\frac{1}{\lambda_i^2}y^2\right),
\]
for $i=1,\ldots,n$, $i=1,\ldots,m$ respectively, in conjunction
with the inequality proved in Lemma \ref{lemma:l7B}, we see that
(\ref{eq:eq714}) implies, for any $t,t+h\geq 0$ that
\begin{multline*}
Y(t+h)-Y(t)\leq\int_{t}^{t+h} \left\{2\;a\;Y(s) +\sum_{i=1}^{n} |b_i|
\left(\nu_i^2\;Y(s)+ \frac{1}{\nu_i^2}\;Y(q_is)\right)\right\}\,ds \\
+ \int_{t}^{t+h} \Big\{\sigma^2\;Y(s)
+\sum_{i=1}^{m}|\sigma||\sigma_i|
\left(\lambda_i^2\;Y(s)+\frac{1}{\lambda_i^2}\;Y(r_is)\right) \\
+\sum_{l=1}^{m} \sigma_l^2\;\mu_l^{-2}\cdot\sum_{j=1}^{m}
\mu_j^2\;Y(r_js)
\Big\}\,ds \\
+\int_{t}^{t+h} 2\;X(s) \left(\sigma X(s)+\sum_{i=1}^{m}
\sigma_iX(r_is)\right)\,dB(s).
\end{multline*}
As in earlier proofs, by defining $m(t)=\mathbb{E}(|Y(t)|)$, and
considering the function $y=y_{\nu,\lambda,\mu}$ defined by
(\ref{eq:eq7A}) with $y(0)>m(0)$, we see that $m(t)\leq
y(t)=y_{\nu,\lambda,\mu}(t)$. By making the appropriate choice of
$(\nu,\lambda,\mu)=(\nu^{\ast},\lambda^{\ast},\mu^{\ast})$ in
Lemma \ref{lemma:l7A}, we see that (\ref{eq:eq71}) is true for
some $\alpha\in\mathbb{R}$ if $2a+\sigma^2<0$; for some $\alpha<0$
if (\ref{eq:eq7D}) holds; and for some $\alpha<-1$ if
(\ref{eq:eq7E}) holds.

The proof that (\ref{eq:eqt72}) follows from (\ref{eq:eqt71})
differs little from that of the proof of Theorem
\ref{theorem:pg3}, and for this reason is omitted.
\end{proof}
We see that the polynomial boundedness of solutions of
(\ref{eq:eq71}) are ensured in mean-square and almost surely if
$2a+\sigma^2<0$, so that the terms which involve the delayed
arguments do not seem to influence the existence of polynomial
asymptotic behaviour for solutions of (\ref{eq:eq71}). We notice
that this mimics the result for solutions of the deterministic
pantograph equation with many delays, (\ref{eq:eq72}).

\subsection{Finite dimensional stochastic pantograph equation}
In~\cite{CarrD2}, Carr and Dyson considered the asymptotic
behaviour of the finite dimensional functional differential
equation
\begin{equation}    \label{eq:eq716}
x'(t)=Ax(t) + Bx(qt),
\end{equation}
as $t\rightarrow\infty$. In (\ref{eq:eq716}), $A$, $B$ are square
matrices, and, for the initial value problem, the solution lies in
$C(\mathbb{R}^{+};\mathbb{R}^{d})$, the space of continuous
functions from $\mathbb{R}^+$ to $\mathbb{R}^d$. A consequence of
a series of results (Theorems 1--3) in ~\cite{CarrD2} is the
following: when all the eigenvalues of $A$ have negative real
parts, there exists $\alpha\in\mathbb{R}$ such that
\[
\|x(t)\| = \mathcal{O}(t^{\alpha}), \quad \mbox{as
$t\rightarrow\infty$}.
\]
We show that a similar result holds for the equation
(\ref{eq:eq73}), under the hypothesis
\begin{equation}     \label{eq:eq7H2}
\mbox{All eigenvalues of $A$ have negative real parts}.
\end{equation}
If the matrix $A$ satisfies (\ref{eq:eq7H2}), there exists a
positive definite matrix $C$ such that
\begin{equation}     \label{eq:eq731}
A^{T}C+CA=-I,
\end{equation}
where $A^{T}$ denotes the transpose of the matrix $A$, and $I$ is
the $d\times d$ identity matrix. Further, let
$\underline{\gamma}^2$ and $\overline{\gamma}^2$, with
$0<\underline{\gamma}^2\leq\overline{\gamma}^2$, be the minimal
and maximal eigenvalues of $C$. Define $V(x)=\langle x,Cx\rangle$,
for all $x\in\mathbb{R}^{d}$. By construction
$\underline{\gamma}^2\|x\|^2\leq V(x)\leq
\overline{\gamma}^2\|x\|^2$. Using the Cauchy-Schwarz inequality
and the inequality $2uv\leq \eta_1^2u^2+\eta_1^{-2}v^2$ for $u$,
$v\in\mathbb{R}$, we see for any $x$, $y\in \mathbb{R}^{d}$ that
\begin{equation}     \label{eq:eq732}
|\langle x,CBy\rangle + \langle By,Cx\rangle| \leq
\|B^{T}C\|\left(\eta_1^2\;\frac{1}{\underline{\gamma}^2}\;V(x)
+\frac{1}{\eta_1^2}\;\frac{1}{\underline{\gamma}^2}\;V(y) \right).
\end{equation}
Moreover, as
\[
\langle C(\Sigma x+\Theta y),\Sigma x+ \Theta y \rangle= \langle
x,\Sigma^TC\Sigma x\rangle+ \langle y,\Theta^TC\Theta y\rangle+
2\langle x,\Sigma^T C\Theta y\rangle,
\]
we can use $2uv\leq \eta_2^2u^2+\eta_2^{-2}v^2$ for $u$,
$v\in\mathbb{R}$, to obtain
\begin{multline}     \label{eq:eq733}
\langle C(\Sigma x+\Theta y),\Sigma x+ \Theta y \rangle
\leq\frac{1}{\underline{\gamma}^2}\;\|\Sigma^TC\Sigma\|\;V(x)
+\frac{1}{\underline{\gamma}^2}\;\|\Theta^TC\Theta\|\;V(y)
\\+ \|\Sigma^T C\Theta\|
\left( \eta_2^2\;\frac{1}{\underline{\gamma}^2}\;V(x)
+\frac{1}{\eta_2^2}\;\frac{1}{\underline{\gamma}^2}\;V(y) \right).
\end{multline}
Finally, we have
\begin{equation}    \label{eq:eq734}
-\langle x,x\rangle \leq \frac{1}{\overline{\gamma}^2}\;V(x).
\end{equation}
Using It\^{o}'s rule and (\ref{eq:eq731}), we have
\begin{multline*}
V(X(t))=V(X(0)) +\int_{0}^{t} -\langle X(s),X(s)\rangle
+\langle X(s),CBX(qs)\rangle \\
+ \langle BX(qs),CX(s)\rangle +\langle C(\Sigma X(s)+ \Theta
X(qs)),\Sigma X(s)+\Theta X(qs)\rangle\,ds
\\
+\int_{0}^{t} \langle X(s),C(\Sigma X(s)+\Theta X(qs))\rangle +
\langle CX(s),\Sigma X(s)+ \Theta X(qs)\rangle\,dB(s).
\end{multline*}
Putting $m(t)=\mathbb{E}[V(X(t))]$, we proceed as before, using
(\ref{eq:eq732}), (\ref{eq:eq733}) and (\ref{eq:eq734}) to show,
for any $\varepsilon>0$, that $m(t)\leq y(t)$, where we have
$y(0)=(1+\varepsilon)m(0)>m(0)$, and
\begin{multline*}
y'(t)=\left( -\frac{1}{\overline{\gamma}^2}+\frac{\|B^T
C\|}{\underline{\gamma}^2}\;\eta_1^2
+\frac{1}{\underline{\gamma}^2}\; \|\Sigma^TC\Sigma\|+
\frac{1}{\underline{\gamma}^2}\; \|\Sigma^TC\Theta\|\;\eta_2^2
\right)y(t)
\\+
\left( \frac{\|B^TC\|}{\underline{\gamma}^2}\;\frac{1}{\eta_1^2}
+\frac{1}{\underline{\gamma}^2}\; \|\Theta^T C\Theta\|
+\frac{1}{\underline{\gamma}^2}\;\|\Sigma^T
C\Theta\|\;\frac{1}{\eta_2^2} \right) y(qt).
\end{multline*}
Notice moreover that we also have $m(0)=\mathbb{E}[\langle
X(0),CX(0)\rangle] \leq \|C\|\mathbb{E}[\|X_0\|^2]$. Thus
\[
\mathbb{E}[\|X(t)\|^2]\leq \frac{1}{\underline{\gamma}^2}\;m(t)
\leq\frac{1}{\underline{\gamma}^2}\;y(t).
\]
We are now in a
position to distill the result of this discussion into the
following theorem.
\begin{theorem}      \label{theorem:t72}
Let $(X(t))_{t \geq 0}$ be the solution of (\ref{eq:eq73}) with
$\mathbb{E}(\|X_0\|^4)< \infty$. Suppose that (\ref{eq:eq7H2})
holds. If $\underline{\gamma}^2$ and $\overline{\gamma}^2$, with
$0<\underline{\gamma}^2\leq \overline{\gamma}^2$, are the maximum
and minimum eigenvalues of the positive definite matrix $C$ which
satisfies $A^TC+CA=-I$, then the following holds:
\begin{itemize}
\item[(i)] Suppose
\[
\frac{1}{\overline{\gamma}^2} + \frac{1}{\underline{\gamma}^2}\;
\|\Sigma^TC\Sigma\|<0.
\]
Then there exist $\alpha\in\mathbb{R}$ and $C>0$ such that
\begin{equation}    \label{eq:eq725}
\mathbb{E}(\|X(t)\|^2)\leq C\;\mathbb{E}[\|X(0)\|^2]\;t^{\alpha},
\end{equation}
and
\begin{equation}     \label{eq:eq727}
\limsup_{t\rightarrow \infty} \frac{\log\|X(t)\|}{\log t}
\leq\frac{1}{2}(\alpha+1), \quad \mbox{a.s.}
\end{equation}
Therefore, the process is globally polynomially bounded in
mean-square and is almost surely globally polynomially bounded.
\item[(ii)] Suppose
\[
-\frac{1}{\overline{\gamma}^2}+\frac{1}{\underline{\gamma}^2}
\left(\|\Sigma^TC\Sigma\|+\|\Theta^TC\Theta\|+2\|B^TC\|
+2\|\Sigma^TC\Theta\|\right)<0.
\]
Then there exists $\alpha<0$ such that (\ref{eq:eq725}) and
(\ref{eq:eq727}) hold. Therefore the equilibrium solution of
(\ref{eq:eq73}) is globally polynomially stable in mean-square,
and the solution process of (\ref{eq:eq73}) is almost surely
globally polynomially bounded.
\item[(iii)] Suppose
\[
-\frac{1}{\overline{\gamma}^2}+\frac{1}{\underline{\gamma}^2}
\left(\|\Sigma^TC\Sigma\|+\frac{\|\Theta^TC\Theta\|}{q}
+\frac{2}{\sqrt{q}}\;\|B^TC\|
+\frac{2}{\sqrt{q}}\;\|\Sigma^TC\Theta\|\right)<0.
\]
Then there exists $\alpha<-1$ such that (\ref{eq:eq725}) and
(\ref{eq:eq727}) hold. Therefore the equilibrium solution of
(\ref{eq:eq73}) is globally polynomially stable in mean-square,
and a.s. globally polynomially stable.
\end{itemize}
\end{theorem}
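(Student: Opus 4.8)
The plan is to build directly on the reduction already carried out in the discussion preceding the statement, which has done essentially all of the stochastic analysis. There, an application of It\^{o}'s rule to $V(X(t))=\langle X(t),CX(t)\rangle$ together with the elementary bounds \eqref{eq:eq732}, \eqref{eq:eq733} and \eqref{eq:eq734} shows that, for any fixed pair of free parameters $\eta_1,\eta_2>0$, the function $m(t)=\mathbb{E}[V(X(t))]$ is dominated by the solution $y$ of a scalar deterministic pantograph equation $y'(t)=\bar a\,y(t)+\bar b\,y(qt)$ with $y(0)=(1+\varepsilon)m(0)>m(0)$, where
\[
\bar a=-\frac{1}{\overline{\gamma}^2}+\frac{1}{\underline{\gamma}^2}\left(\|\Sigma^TC\Sigma\|+\|B^TC\|\,\eta_1^2+\|\Sigma^TC\Theta\|\,\eta_2^2\right),
\]
\[
\bar b=\frac{1}{\underline{\gamma}^2}\left(\|\Theta^TC\Theta\|+\|B^TC\|\,\eta_1^{-2}+\|\Sigma^TC\Theta\|\,\eta_2^{-2}\right),
\]
and moreover $\mathbb{E}[\|X(t)\|^2]\le\underline{\gamma}^{-2}m(t)\le\underline{\gamma}^{-2}y(t)$. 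Since $\bar b\ge 0$, and we may assume $\bar b>0$ (if $\bar b=0$ then $B=\Theta=0$, so \eqref{eq:eq73} carries no delayed terms and the conclusions hold a fortiori from the delay-free theory), the mean-square estimate \eqref{eq:eq725} will follow at once from Lemma~\ref{lemma:lemp3}(i) whenever $\bar a<0$: that lemma gives $y(t)\le C\,y(0)(1+t)^{\alpha}$ with $\alpha$ the unique real root of $\bar a+\bar b\,q^{\alpha}=0$.

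The three parts then differ only in how $\eta_1,\eta_2$ are selected so as to control the sign of $\alpha$. From $q^{\alpha}=-\bar a/\bar b$ and $q\in(0,1)$ one reads off that $\alpha<0$ precisely when $\bar a+\bar b<0$ and $\alpha<-1$ precisely when $\bar a+\bar b/q<0$, the scalar analogues of \eqref{eq:eq77} and \eqref{eq:eq78}. For part (i) I note that $\bar a\to-\overline{\gamma}^{-2}+\underline{\gamma}^{-2}\|\Sigma^TC\Sigma\|<0$ as $\eta_1,\eta_2\downarrow 0$ by the hypothesis of (i), so fixing $\eta_1,\eta_2$ small enough makes $\bar a<0$ and yields \eqref{eq:eq725} with some $\alpha\in\mathbb{R}$. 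For part (ii) I would minimise $\bar a+\bar b$ over $\eta_1,\eta_2>0$; each expression $\eta^2+\eta^{-2}$ attains its minimum value $2$ at $\eta=1$, so $\eta_1=\eta_2=1$ gives $\bar a+\bar b=-\overline{\gamma}^{-2}+\underline{\gamma}^{-2}(\|\Sigma^TC\Sigma\|+\|\Theta^TC\Theta\|+2\|B^TC\|+2\|\Sigma^TC\Theta\|)$, which is exactly the hypothesis of (ii), hence negative and $\alpha<0$. For part (iii) I would instead minimise $\bar a+\bar b/q$; each expression $\eta^2+(q\eta^2)^{-1}$ is minimised at $\eta^2=1/\sqrt q$ with value $2/\sqrt q$, so the choice $\eta_1^2=\eta_2^2=1/\sqrt q$ reproduces precisely the left-hand side of the hypothesis of (iii), whence $\bar a+\bar b/q<0$ and $\alpha<-1$.

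It remains to deduce the almost sure estimate \eqref{eq:eq727} from the mean-square bound \eqref{eq:eq725}. This step is identical in structure to the proof of Theorem~\ref{theorem:pg3}(ii), so I would only sketch it: take a mesh $a_n=n\tau$ with $\tau$ calibrated to the Burkholder-Davis-Gundy constant in \eqref{bdg}, apply the integral form of \eqref{eq:eq73} on each $[a_n,a_{n+1}]$, and bound $\mathbb{E}[\sup_{a_n\le t\le a_{n+1}}\|X(t)\|^2]$ by combining \eqref{eq:eq725} on the drift with a Burkholder-Davis-Gundy estimate on the stochastic integral. The hypothesis $\mathbb{E}\|X_0\|^4<\infty$ is exactly what provides the fourth-moment integrability needed to justify these estimates. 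A Markov inequality and the first Borel-Cantelli lemma then give $\limsup_{n\to\infty}\sup_{a_n\le t\le a_{n+1}}\|X(t)\|\,n^{-\frac12(\alpha+1)-\varepsilon}=0$ almost surely; interpolating over $t\in[a_n,a_{n+1}]$ and letting $\varepsilon\downarrow 0$ through the rationals yields \eqref{eq:eq727}. When $\alpha<-1$ (part (iii)) the exponent $\frac12(\alpha+1)$ is negative, so \eqref{eq:eq727} forces $\|X(t)\|\to 0$ almost surely, which is the claimed almost sure polynomial stability.

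I do not anticipate a serious obstacle, since the genuinely new content is confined to the three parameter optimisations. The only points requiring care are the verification that in (ii) and (iii) it is the interior minimisers $\eta_i=1$ and $\eta_i^2=1/\sqrt q$ (rather than the boundary behaviour $\eta_i\downarrow 0$ used in (i)) that reproduce the stated thresholds, and the bookkeeping ensuring that the resulting constant $C$ and exponent $\alpha$ in \eqref{eq:eq725} are independent of the initial data.
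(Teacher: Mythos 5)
Your proposal is correct and follows essentially the same route as the paper: the paper's proof of this theorem is precisely the discussion preceding its statement (the Lyapunov function $V(x)=\langle x,Cx\rangle$, the bounds \eqref{eq:eq732}--\eqref{eq:eq734}, and the comparison with the scalar pantograph equation for $y$), combined with Lemma~\ref{lemma:lemp3} and the same choices $\eta_i\downarrow 0$, $\eta_i=1$, and $\eta_i^2=1/\sqrt{q}$ for parts (i)--(iii), with the almost sure bound deduced exactly as in Theorem~\ref{theorem:pg3}. You also correctly (if tacitly) read the hypothesis of part (i) with a leading minus sign, $-1/\overline{\gamma}^2+\underline{\gamma}^{-2}\|\Sigma^TC\Sigma\|<0$, which is evidently the intended condition since the printed version cannot hold.
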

We note that we can obtain weaker sufficient conditions under
which the conclusions of Theorem \ref{theorem:t72} hold, which can
be more easily interpreted and verified. Instead of
(\ref{eq:eq732}) and (\ref{eq:eq733}), we have
\begin{equation*}
|\langle x,CBy\rangle + \langle By,Cx\rangle| \leq
\|B\|\;\|C\|\left(\eta_1^2\;\frac{1}{\underline{\gamma}^2}\;V(x)
+\frac{1}{\eta_1^2}\;\frac{1}{\underline{\gamma}^2}\;V(y) \right)
\end{equation*}
and
\begin{multline*}
\langle C(\Sigma x+\Theta y),\Sigma x+ \Theta y \rangle
\leq\frac{1}{\underline{\gamma}^2}\;\|\Sigma\|^2\;\|C\|\;V(x)
+\frac{1}{\underline{\gamma}^2}\;\|\Theta\|^2\;\|C\|\;V(y)
\\+ \|\Sigma\|\;\|C\|\;\|\Theta\|
\left( \eta_2^2\;\frac{1}{\underline{\gamma}^2}\;V(x)
+\frac{1}{\eta_2^2}\;\frac{1}{\underline{\gamma}^2}\;V(y) \right).
\end{multline*}
Since $C$ is symmetric, $\|C\|=\overline{\gamma}^2$. Using these
estimates in the proof of Theorem \ref{theorem:t72} gives the
following result.
\begin{corollary}   \label{corollary:c71}
Let $(X(t))_{t \geq 0}$ be the solution of (\ref{eq:eq73}) with
$\mathbb{E}(\|X_0\|^4)< \infty$. Suppose that (\ref{eq:eq7H2})
holds. If $\underline{\gamma}^2$ and $ \overline{\gamma}^2$, with
$0<\underline{\gamma}^2\leq \overline{\gamma}^2$, are the maximum
and minimum eigenvalues of the positive definite matrix $C$ which
satisfies $A^TC+CA=-I$, then the following holds:
\begin{itemize}
\item[(i)] Suppose
\[
\frac{\underline{\gamma}^2}{\overline{\gamma}^4} >\|\Sigma\|^2.
\]
Then there exist $\alpha\in\mathbb{R}$, $C>0$ such that
(\ref{eq:eq725}) and (\ref{eq:eq727}) hold.
\item[(ii)] Suppose
\[
\frac{\underline{\gamma}^2}{\overline{\gamma}^4}
>(\|\Sigma\|+\|\Theta\|)^2.
\]
Then there exists $\alpha<0$ such that (\ref{eq:eq725}) and
(\ref{eq:eq727}) hold.
\item[(iii)] Suppose
\[
\frac{\underline{\gamma}^2}{\overline{\gamma}^4}
>\frac{2}{\sqrt{q}}\|B\|+(\|\Sigma\|+\frac{\|\Theta\|}{\sqrt{q}})^2.
\]
Then there exists $\alpha<-1$ such that (\ref{eq:eq725}) and
(\ref{eq:eq727}) hold.
\end{itemize}
\end{corollary}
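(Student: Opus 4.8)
The plan is to recognise that Corollary~\ref{corollary:c71} differs from Theorem~\ref{theorem:t72} only in the \emph{quality} of the quadratic estimates used to dominate the drift of $V(X(t))=\langle X(t),CX(t)\rangle$, so that the entire It\^o/comparison apparatus of Theorem~\ref{theorem:t72} can be transcribed unchanged. First I would take as input the two coarser inequalities displayed immediately before the corollary, which replace \eqref{eq:eq732} and \eqref{eq:eq733} using submultiplicativity of the operator norm, namely $\|B^TC\|\leq\|B\|\,\|C\|$, $\|\Sigma^TC\Sigma\|\leq\|\Sigma\|^2\|C\|$, $\|\Sigma^TC\Theta\|\leq\|\Sigma\|\,\|\Theta\|\,\|C\|$ and $\|\Theta^TC\Theta\|\leq\|\Theta\|^2\|C\|$. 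Since $C$ is symmetric and positive definite with largest eigenvalue $\overline{\gamma}^2$, we have $\|C\|=\overline{\gamma}^2$, so each mixed norm appearing in the comparison equation of Theorem~\ref{theorem:t72} is uniformly replaced by the corresponding product of individual norms multiplied by $\overline{\gamma}^2$.

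Second, feeding these bounds through the identical argument --- setting $m(t)=\mathbb{E}[V(X(t))]$, bounding $D^+m$, and dominating $m$ by the solution $y$ of a scalar pantograph equation with $y(0)>m(0)$ --- produces a comparison equation $y'(t)=\bar a\,y(t)+\bar b\,y(qt)$ whose coefficients now read
\begin{align*}
\bar a &= -\frac{1}{\overline{\gamma}^2}+\frac{\overline{\gamma}^2}{\underline{\gamma}^2}\left(\|B\|\eta_1^2+\|\Sigma\|^2+\|\Sigma\|\,\|\Theta\|\,\eta_2^2\right),\\
\bar b &= \frac{\overline{\gamma}^2}{\underline{\gamma}^2}\left(\frac{\|B\|}{\eta_1^2}+\|\Theta\|^2+\frac{\|\Sigma\|\,\|\Theta\|}{\eta_2^2}\right).
\end{align*}
Applying Lemma~\ref{lemma:lemp3} together with Lemma~\ref{lemma:lemp2}(i) exactly as in Theorem~\ref{theorem:t72}, the mean-square bound \eqref{eq:eq725} holds with exponent $\gamma$ solving $\bar a+\bar b\,q^{\gamma}=0$ as soon as $\bar a<0$; the almost sure estimate \eqref{eq:eq727} then follows from \eqref{eq:eq725} by the Borel--Cantelli argument of Theorem~\ref{theorem:pg3}, which is insensitive to the exact values of the constants.

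Third, I would read off the three conditions by optimising over $\eta_1,\eta_2>0$ and matching against the monotone map $\gamma\mapsto\bar a+\bar b\,q^{\gamma}$. Letting $\eta_1,\eta_2\downarrow 0$ minimises $\bar a$ and forces $\bar a<0$ precisely when $\overline{\gamma}^2\|\Sigma\|^2/\underline{\gamma}^2<1/\overline{\gamma}^2$, i.e.\ $\underline{\gamma}^2/\overline{\gamma}^4>\|\Sigma\|^2$, giving part~(i). For $\gamma<0$ one needs $\bar a+\bar b<0$; since each bracket $\eta^2+\eta^{-2}$ is minimised at $\eta=1$, the choice $\eta_1=\eta_2=1$ collapses the combined coefficient to $2\|B\|+(\|\Sigma\|+\|\Theta\|)^2$, yielding part~(ii). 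For $\gamma<-1$ one needs $\bar a+\bar b/q<0$; here the relevant expressions $\|B\|(\eta_1^2+\eta_1^{-2}/q)$ and $\|\Sigma\|\,\|\Theta\|(\eta_2^2+\eta_2^{-2}/q)$ are each minimised at $\eta_i^2=1/\sqrt q$ with value $2/\sqrt q$ times their prefactor, producing $\tfrac{2}{\sqrt q}\|B\|+(\|\Sigma\|+\|\Theta\|/\sqrt q)^2$ and hence part~(iii).

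The only genuinely new bookkeeping is this elementary one-variable minimisation in $\eta_1,\eta_2$; everything else is a direct transcription of Theorem~\ref{theorem:t72}. I expect the main (and mild) obstacle to be keeping precise track of the factor $\overline{\gamma}^2=\|C\|$ that the coarser estimates introduce uniformly into \emph{both} coefficients, and correctly aligning the resulting comparison exponent with the thresholds $\gamma=0$ and $\gamma=-1$ via the monotonicity of $\gamma\mapsto\bar a+\bar b\,q^{\gamma}$ (recalling that $\log q<0$ reverses the relevant inequalities).
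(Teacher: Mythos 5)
Your proposal is correct and follows exactly the paper's route: the paper's entire ``proof'' of Corollary~\ref{corollary:c71} is the remark preceding it, namely that one substitutes the submultiplicative bounds $\|B^TC\|\leq\|B\|\,\|C\|$, $\|\Sigma^TC\Sigma\|\leq\|\Sigma\|^2\|C\|$, $\|\Theta^TC\Theta\|\leq\|\Theta\|^2\|C\|$, $\|\Sigma^TC\Theta\|\leq\|\Sigma\|\,\|\Theta\|\,\|C\|$ with $\|C\|=\overline{\gamma}^2$ into the comparison equation of Theorem~\ref{theorem:t72} and re-runs the $\eta_1,\eta_2$ optimisation, which is precisely what you do. One remark: your calculation for part~(ii) yields the threshold $\underline{\gamma}^2/\overline{\gamma}^4>2\|B\|+(\|\Sigma\|+\|\Theta\|)^2$, whereas the corollary as printed omits the $2\|B\|$ term; since Theorem~\ref{theorem:t72}(ii) does contain $2\|B^TC\|$ and part~(iii) retains $\tfrac{2}{\sqrt{q}}\|B\|$, the printed condition in part~(ii) appears to be a typographical omission and your derived condition is the one the method actually supports, so you should state it rather than claim it ``yields part~(ii)'' as literally written.
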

Therefore, if the intensities of the noise terms are sufficiently
small, the negative spectrum of $A$ ensures the polynomial
asymptotic stability of the noise perturbed system, thereby
following the global polynomial asymptotic stability exhibited for
(\ref{eq:eq716}). Note once again that the polynomial mean square
and almost sure bounds on the solution exist, provided the
intensity of noise from the non-delay term is sufficiently small
and $A$ has a negative spectrum: as is the case for equation
(\ref{eq:eq716}), the presence of polynomial asymptotic behaviour
does not seem to be determined by the terms with delayed
arguments.

\section{Concluding remarks}
There are a number of related problems which we have not studied
in this paper, which nonetheless can be treated using the analysis
presented here. For instance, all the stochastic evolutions we
have considered above are driven by a single Brownian motion, but
no new ideas are required to extend the results to stochastic
functional differential equations driven by finitely many Brownian
motions. Another interesting class of equations to study are the
stochastic analogues of the pantograph equations studied in, e.g.
\cite{MakTer, Kris}, where the delayed argument is not necessarily of
proportional form, and the rate of decay or growth of solutions is 
not necessarily polynomially fast. We hope to consider the asymptotic 
behaviour of such stochastic equations elsewhere. We have also 
omitted to study nonlinear and nonautonomous versions of the 
stochastic pantograph equation, for example
\[
dX(t) = \left(f_1(t,X(t))+g_1(t,X(qt))\right)\,dt +
\left(f_2(t,X(t))+g_2(t,X(qt))\right)\,dB(t),
\]
where $f_1$, $f_2$ are globally linearly bounded and Lipschitz
continuous. If however, the function $f_1$ satisfies
\[
\langle f_1(t,x),x\rangle \leq -a\|x\|^2
\]
for all $x\in\mathbb{R}$, $t\geq 0$ and some $a>0$, we can again
recover the polynomial asymptotic behaviour exhibited by the
processes studied in this paper for this process. Finite
dimensional and many--delay analogues of the extensions mentioned
here can be treated using the techniques of the previous section.

\section{Appendix} We will require some further supporting results from
stochastic analysis, and elementary properties of the stochastic
pantograph equation.

Before we give the proof of Lemma~\ref{lemma:lemp1}, which was
earlier deferred, we first recall for a continuous real-valued
function $f$ of a real variable, the Dini-derivative
$D_{-}f$ is defined as
\[
D_{-} f(t) = \liminf_{\delta \uparrow 0}
\frac{f(t+\delta)-f(t)}{\delta}.
\]
To prove Lemma~\ref{lemma:lemp1}, we also require the following
result, which appears as Lakshmikantham and Leela~\cite[Vol. 1,
Lemma 1.2.2]{lakleelav1}.

\begin{lemma}    \label{lemma:llakleela}
Let $v,w$ be continuous functions and $D v(t) \leq w(t)$ for $t$
in an interval with a possible exceptional set of measure zero and
$D$ being a fixed Dini-derivative. Then $D_- v(t) \leq w(t)$ holds
for $t$ in an interval, with a possible exceptional set of measure
zero.
\end{lemma}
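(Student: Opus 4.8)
The plan is to deduce the bound on the lower left Dini derivative $D_-$ from the hypothesis on the fixed derivative $D$ by comparing $D_- v$ with $D v$ at almost every point, and then to combine the two exceptional sets, whose union is again of measure zero. First I would dispose of the two easy cases. If $D = D_-$ there is nothing to prove. If $D = D^-$, the universal pointwise inequality $D_- v(t) \le D^- v(t)$ (a limit inferior never exceeds the corresponding limit superior) gives $D_- v(t) \le D^- v(t) = D v(t) \le w(t)$ wherever the hypothesis holds, so the exceptional set is not enlarged.

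The substantive case---and the one actually invoked in the proof of Lemma~\ref{lemma:lemp1}, where $D = D^+$---is the comparison of a lower left derivative with an upper right derivative. Here the key step is to establish that
\[
D_- v(t) \le D^+ v(t) \qquad \text{for almost every } t.
\]
Granting this, the conclusion is immediate: off a set of measure zero one has $D_- v(t) \le D^+ v(t) = D v(t) \le w(t)$. To prove the displayed almost-everywhere inequality I would appeal to the Denjoy--Young--Saks theorem, which asserts that at almost every $t$ the continuous function $v$ realises exactly one of four configurations: either $v$ is differentiable at $t$, so that all four Dini derivatives coincide, or $v$ sits at a one-sided \emph{knot}, in which case $D_- v(t)$ is forced either to equal $D^+ v(t)$ or to equal $-\infty$. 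In each alternative $D_- v(t) \le D^+ v(t)$, which is precisely what is needed.

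The main obstacle is exactly this almost-everywhere comparison between a left-hand and a right-hand Dini derivative. Unlike the pointwise bound $D_- v \le D^- v$, there is no elementary reason for $D_- v$ to lie below $D^+ v$ at an individual point; the one-sided behaviours of a continuous function can genuinely disagree there, and it is only the global measure-theoretic regularity encoded in Denjoy--Young--Saks that restores the inequality off a null set. I therefore expect that theorem, rather than any direct manipulation of difference quotients, to be the crux; an elementary route would essentially have to reprove its content by covering each set $\{\,t : D_- v(t) > s > r > D^+ v(t)\,\}$, for rationals $r < s$, and showing it is null by studying the auxiliary function $t \mapsto v(t) - ct$ with $c \in (r,s)$. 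The lower right derivative is the most delicate remaining case and would be handled through the same circle of ideas. Finally I would record the bookkeeping: the conclusion holds off the union of the null exceptional set supplied by the hypothesis and the null set from Denjoy--Young--Saks, which is again null, as the statement requires.
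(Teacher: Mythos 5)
The paper offers no proof of this lemma at all: it is quoted from Lakshmikantham and Leela~\cite[Vol.~1, Lemma 1.2.2]{lakleelav1}, so there is no in-paper argument to compare against, and your proposal must be judged on its own terms. Three of your four cases are sound. For $D=D_-$ there is nothing to prove; for $D=D^-$ the pointwise inequality $D_-v\leq D^-v$ suffices; and for $D=D^+$ (the only case the paper actually invokes, in the proof of Lemma~\ref{lemma:lemp1}) your appeal to Denjoy--Young--Saks is correct, since in each of its four configurations one has $D_-v\leq D^+v$. Your ``elementary route'' is in fact the cleaner one: every point of $\{t: D_-v(t)>s>r>D^+v(t)\}$ is a strict local maximum of $t\mapsto v(t)-ct$ for $c\in(r,s)$, and the strict local maxima of any function form a countable set, so $D_-v\leq D^+v$ off a countable (not merely null) set, with no need for Denjoy--Young--Saks at all.

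The genuine gap is the remaining case $D=D_+$, which you dismiss as ``handled through the same circle of ideas.'' It is not. Denjoy--Young--Saks does not give $D_-v\leq D_+v$ almost everywhere: its second configuration has $D^+v=D_-v$ finite while $D_+v=-\infty$, so the inequality fails there, and for a continuous function that configuration can occupy a set of positive measure; the local-extremum argument likewise controls the pairs $(D_-,D^+)$ and $(D_+,D^-)$ but not $(D_-,D_+)$. The classical proof (the one behind the cited lemma) proceeds differently: set $L(t)=v(t)-\int_{t_0}^{t}w(s)\,ds$ and use the fact that a continuous function with one fixed Dini derivative $\leq 0$ off a \emph{countable} set is nonincreasing, whence all four Dini derivatives of $L$ are $\leq 0$ at every point. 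That route treats all four choices of $D$ uniformly and yields the conclusion everywhere, but it genuinely requires a countable rather than a null exceptional set (the Cantor function shows the monotonicity step fails for null exceptional sets). The distinction matters here: the proof of Lemma~\ref{lemma:lemp1} evaluates $D_-p$ at the specific point $t_1$, so it needs the conclusion at every $t$, which your almost-everywhere argument would not deliver even in the cases it does cover.
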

We now turn to the proof of Lemma~\ref{lemma:lemp1}.
\begin{proof}
With $x$ defined by (\ref{detpantoeps}) and $p$ defined by
(\ref{eq:eqg1}) with $0<p(0)<x(0)$, we will show that
\begin{equation}\label{toshow}
p(t) < x(t)
\end{equation}
for all $t \geq 0$. Assume that (\ref{toshow}) is false; then
there exists $T>0$ such that the set
\[
Z = \{ t \in [0,T) : p(t) \geq x(t) \}
\]
is nonempty. We set $t_1 = \inf Z$ and as $p(0)<x(0)$, we have
that $0 < t_1$ and
\[
p(t_1) = x(t_1) \quad \mbox{and} \quad p(t) < x(t) \quad
\mbox{for} \quad t \in [0,t_1).
\]
Then, for all $-|t_1|<h<0$, as $p(t_1+h)<x(t_1+h)$,
\[
\frac{1}{h} \{(p(t_1+h) - (p(t_1) \}
> \frac{1}{h} \{x(t_1+h)) - x(t_1)) \},
\]
so letting $h\uparrow0$, we get
\begin{equation} \label{eq:eqfact1}
D_- p(t_1)\geq x'(t_1).
\end{equation}
Next by Lemma~\ref{lemma:llakleela}, as $p$ obeys (\ref{eq:eqg1})
and $t\mapsto  {\bar a}p(t) + {\bar b} p(qt)$ is continuous, we
have
\begin{equation*}
D_- p(t) \leq {\bar a} p(t) + {\bar b} p(qt), \quad t>0.
\end{equation*}
In particular, this yields
\begin{equation}   \label{eq:eqfact2}
D_- p(t_1) \leq {\bar a} p(t_1) + {\bar b} p(qt_1).
\end{equation}
Thus, (\ref{eq:eqfact1}), (\ref{eq:eqfact2}) and
(\ref{detpantoeps}), together with the facts that $p(t_1)=x(t_1)$,
and $p(qt_1)<x(qt_1)$, imply
\begin{eqnarray*}
0&\leq& D_- p(t_1) - x'(t_1) \\
&\leq& {\bar a} p(t_1) + {\bar b} p(qt_1)
          - ({\bar a} x(t_1) \ + \ {\bar b} x(qt_1)) \\
&=& {\bar b}(p(qt_1)-x(qt_1)) \\
&<&0,
\end{eqnarray*}
which is a contradiction. The result therefore follows.
\end{proof}

The following result is due to LeGall \cite{LeGall}.
\begin{lemma} \label{lemma:LeGall}
Suppose $X$ is a continuous semi-martingale with decomposition
\[
X(t) = X_0 + V(t) + M(t),
\]
where $M$ is a continuous local martingale and $V$ is the
difference of continuous, non-decreasing adapted processes with
$V_0 =0$, a.s. If $k: (0,\infty) \rightarrow (0,\infty)$ is a
Borel function satisfying
\[
\int_0^{\varepsilon} \frac{1}{k(x)} \ dx = \infty \qquad\mbox{for
all } \varepsilon >0,
\]
and
\[
\int_0^{t} \frac{d \langle M \rangle (s)}{k(X(s))} \ 1_{\{X(s)>
0\} } ds < \infty \qquad \mbox{for all } t \ \mbox{a.s.},
\]
then the semi-martingale local time of $X$ at $0$ is identically
$0$, almost surely.
\end{lemma}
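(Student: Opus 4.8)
The plan is to reduce the statement to the occupation time (density) formula for the local time of the continuous semimartingale $X$. Write $(\Lambda_t(a))_{t\geq 0,\,a\in\mathbb{R}}$ for the family of semimartingale local times of $X$, in the version that is right-continuous in the spatial variable $a$ and continuous, non-decreasing in $t$. Since $V$ is of bounded variation it contributes nothing to the quadratic variation, so $\langle X\rangle=\langle M\rangle$. The occupation formula then asserts that, almost surely and simultaneously for all $t\geq 0$,
\[
\int_0^t g(X(s))\,d\langle M\rangle(s)=\int_{\mathbb{R}} g(a)\,\Lambda_t(a)\,da
\]
for every bounded non-negative Borel function $g$; by monotone convergence this extends to arbitrary non-negative Borel $g$.

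First I would argue by contradiction. Suppose the conclusion fails; since $t\mapsto\Lambda_t(0)$ is non-decreasing and continuous, failure on an event of positive probability forces the existence of a rational $t>0$ with $\mathbb{P}[\Lambda_t(0)>0]>0$. Fix $\omega$ in this event. By right-continuity of $a\mapsto\Lambda_t(a)$ at $a=0$, there is $\delta=\delta(\omega)>0$ with
\[
\Lambda_t(a)\geq\tfrac12\Lambda_t(0)>0\quad\text{for all }a\in[0,\delta).
\]
I then apply the occupation formula with $g(x)=k(x)^{-1}\,1_{\{x>0\}}$, realised as the increasing limit of the bounded functions $g_n(x)=\min(k(x)^{-1},n)\,1_{\{x>0\}}$, for each of which the identity holds a.s.\ for all $t$. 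Passing to the limit gives
\[
\int_0^t \frac{1}{k(X(s))}\,1_{\{X(s)>0\}}\,d\langle M\rangle(s)=\int_0^\infty \frac{\Lambda_t(a)}{k(a)}\,da.
\]

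The contradiction now follows by bounding the right-hand side from below on the chosen event:
\[
\int_0^\infty \frac{\Lambda_t(a)}{k(a)}\,da\geq\int_0^\delta \frac{\Lambda_t(a)}{k(a)}\,da\geq\frac{\Lambda_t(0)}{2}\int_0^\delta \frac{1}{k(a)}\,da=+\infty,
\]
the divergence being exactly the hypothesis $\int_0^\varepsilon k(x)^{-1}\,dx=\infty$. But the left-hand side of the occupation identity is finite by the standing assumption that $\int_0^t k(X(s))^{-1}1_{\{X(s)>0\}}\,d\langle M\rangle(s)<\infty$, contradicting the displayed lower bound. Hence $\mathbb{P}[\Lambda_t(0)>0]=0$ for every rational $t$, and continuity of $t\mapsto\Lambda_t(0)$ upgrades this to $\Lambda_t(0)=0$ for all $t\geq 0$, almost surely.

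The main obstacle is not the contradiction itself but the careful bookkeeping of the almost-sure statements: one must fix a single version of the local time for which the occupation formula (for every $g_n$) and right-continuity in $a$ both hold off one common null set, and then interchange the limit in $n$ with the time and spatial integrals by monotone convergence. A secondary technical point is the choice of the right-continuous (rather than symmetric) version of $\Lambda_t(\cdot)$, which is precisely what guarantees $\Lambda_t(a)\to\Lambda_t(0)$ as $a\downarrow 0$ and hence the lower bound on $[0,\delta)$; under a different convention one would instead work with a left neighbourhood and the corresponding one-sided occupation density.
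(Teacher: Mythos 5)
Your argument is correct, but it is worth noting that the paper itself offers no proof of this lemma at all: it is stated with the attribution ``due to Le Gall \cite{LeGall}'' and the reader is referred to that reference. What you have written is, in essence, the standard proof from Le Gall's paper: take the c\`adl\`ag-in-space version of the semimartingale local time, use the occupation density formula with $g = k^{-1}1_{(0,\infty)}$ (extended from bounded truncations by monotone convergence), and derive a contradiction between the divergence of $\int_0^\varepsilon k(a)^{-1}\,da$ and the assumed finiteness of the time integral, using right-continuity of $a \mapsto \Lambda_t(a)$ at $a=0$ to get the lower bound $\Lambda_t(a) \ge \tfrac12 \Lambda_t(0)$ on a right neighbourhood. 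Your bookkeeping of the null sets, the reduction to rational $t$, and the final upgrade via monotonicity of $t \mapsto \Lambda_t(0)$ are all sound. Two cosmetic points: with the normalisation the paper uses in its Tanaka formula (where the local time enters as $2\Lambda_t(0)$, following Karatzas and Shreve), the occupation formula carries a factor of $2$ on the right-hand side, i.e. $\int_0^t g(X(s))\,d\langle X\rangle(s) = 2\int_{\mathbb{R}} g(a)\Lambda_t(a)\,da$; this rescaling is harmless to your contradiction. Also, the ``$ds$'' appearing alongside $d\langle M\rangle(s)$ in the statement of the hypothesis is a typographical slip in the paper, and you have interpreted the integral correctly as being taken against $d\langle M\rangle(s)$.
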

\begin{remark} \label{remark:r3}
For the equation (\ref{stochpanto}) with $\rho=0$ i.e.,
\begin{equation}     \label{eq:eqg5}
dX(t) = (aX(t)+bX(qt))\,dt + \sigma X(t)\,dB(t),
\end{equation}
note that $k(x)=x^2$ satisfies the conditions of Lemma
\ref{lemma:LeGall}.
\end{remark}
In the next Lemma, we extend results on moment bounds for the
stochastic pantograph equation from \cite{BakerBuck2}.
\begin{lemma}        \label{lemma:lemg2}
If $X(t;0,X_0)$ is a solution of (\ref{stochpanto}) with
$\mathbb{E}(X_0^4)< \infty$, then
\begin{enumerate}
\item[(i)]
$t\mapsto \mathbb{E}(|X(t)|^m)$ is continuous on $\mathbb{R}^+$
for $m =1,2$,
\item[(ii)]
$\mathbb{E}\int_0^t |X(s)|^{2m}\,ds < \infty$ for all fixed $t\geq
0$ and for $m =1,2$.
\end{enumerate}
\end{lemma}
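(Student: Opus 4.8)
The plan is to prove the integrability statement (ii) first, since the continuity in (i) will rest on the local boundedness of the moments that the argument for (ii) produces; existence of a continuous strong solution is already supplied by \cite{BakerBuck2}, so I treat $X$ as a given continuous adapted process. For (ii) I would apply It\^o's rule to $X^{2m}(t)$, $m=1,2$. With $f(x)=x^{2m}$ this yields a drift built from $2m\,X^{2m-1}(s)(aX(s)+bX(qs))$ and the It\^o correction $m(2m-1)X^{2m-2}(s)(\sigma X(s)+\rho X(qs))^2$, together with a stochastic integral. To make taking expectations legitimate---killing the martingale term and, crucially, guaranteeing finiteness of the moments in the first place---I would localise with the stopping times $\tau_n=\inf\{t\geq 0:|X(t)|\geq n\}$, which increase to $\infty$ almost surely by continuity of paths.

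On the stopped process every integrand is bounded (note that $qs\leq s\leq\tau_n$ keeps the delayed argument inside the stopped horizon, so $|X(qs)|\leq n$ there as well), hence the stochastic integral has zero mean. Expanding the cross terms by Young's inequality $2xy\leq\eta^2x^2+\eta^{-2}y^2$ bounds the modulus of the whole drift integrand by $K\,(X^{2m}(s)+X^{2m}(qs))$ for a suitable constant $K$, giving
\[
\mathbb{E}[X^{2m}(t\wedge\tau_n)]\leq \mathbb{E}[X_0^{2m}]+K\int_0^t\mathbb{E}[X^{2m}(s\wedge\tau_n)]\,ds+K\int_0^t\mathbb{E}[X^{2m}(qs\wedge\tau_n)]\,ds.
\]
The delay term is absorbed by the change of variables $u=qs$: since $q\in(0,1)$ gives $qt\leq t$ and the integrand is nonnegative,
\[
\int_0^t\mathbb{E}[X^{2m}(qs\wedge\tau_n)]\,ds=\frac{1}{q}\int_0^{qt}\mathbb{E}[X^{2m}(u\wedge\tau_n)]\,du\leq\frac{1}{q}\int_0^t\mathbb{E}[X^{2m}(u\wedge\tau_n)]\,du.
\]
Writing $\psi_n(t)=\mathbb{E}[X^{2m}(t\wedge\tau_n)]$, which is bounded by $n^{2m}$ on $[0,t]$, this reads $\psi_n(t)\leq\mathbb{E}[X_0^{2m}]+K(1+1/q)\int_0^t\psi_n(s)\,ds$, so Gronwall's inequality gives $\psi_n(t)\leq\mathbb{E}[X_0^{2m}]\,e^{K(1+1/q)t}$ uniformly in $n$. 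Letting $n\to\infty$ and using Fatou's lemma yields $\mathbb{E}[X^{2m}(t)]\leq\mathbb{E}[X_0^{2m}]\,e^{K(1+1/q)t}<\infty$, whence $\mathbb{E}\int_0^t|X(s)|^{2m}\,ds=\int_0^t\mathbb{E}[|X(s)|^{2m}]\,ds<\infty$ by Tonelli, which is (ii). The hypothesis $\mathbb{E}[X_0^4]<\infty$ is exactly what the case $m=2$ requires.

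For (i) I would show that $t\mapsto X(t)$ is continuous in $L^m$ for $m=1,2$; continuity of $t\mapsto\mathbb{E}[|X(t)|^m]=\|X(t)\|_{L^m}^m$ then follows from the reverse triangle inequality $\big|\,\|X(t)\|_{L^m}-\|X(s)\|_{L^m}\,\big|\leq\|X(t)-X(s)\|_{L^m}$. From the integrated form of \eqref{stochpanto},
\[
X(t)-X(s)=\int_s^t(aX(u)+bX(qu))\,du+\int_s^t(\sigma X(u)+\rho X(qu))\,dB(u),
\]
I would estimate the drift part by Jensen's (or the Cauchy--Schwarz) inequality and the martingale part by the It\^o isometry when $m=2$, and by Cauchy--Schwarz followed by the isometry when $m=1$. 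Each resulting bound is an integral over $[s,t]$ of quantities $\mathbb{E}[|X(u)|^k]$ and $\mathbb{E}[|X(qu)|^k]$ with $k\leq 2m\leq 4$, all locally bounded by the estimates from (ii), so the bound tends to $0$ as $s\to t$.

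The main obstacle is the circularity inherent in the moment estimate: one cannot apply It\^o and take expectations until integrability is already known, so the localisation-plus-Fatou scaffolding is essential rather than decorative. Once it is in place the proportional delay turns out to be benign, because $qt<t$ means the delayed argument only ever looks into the already-controlled past, and the single substitution $u=qs$ collapses the delayed integral into the undelayed one so that ordinary Gronwall closes the estimate.
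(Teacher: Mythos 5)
Your proof is correct, but it takes a genuinely different route from the paper. The paper disposes of the lemma almost entirely by citation to Baker and Buckwar \cite{BakerBuck2}: it quotes the modulus-of-continuity estimate $\mathbb{E}(|X(t)-X(s)|^2)\leq C(T)(t-s)$ to get part (i) (via the same reverse-triangle/Cauchy--Schwarz manipulations you sketch), and quotes the local boundedness of $t\mapsto\mathbb{E}\bigl(\sup_{0\leq s\leq t}|X(s)|^2\bigr)$ --- together with a ``modified'' version of that argument for the fourth moment --- to get part (ii). You instead rebuild the a priori moment bounds from scratch: localisation by $\tau_n$, It\^o's formula for $x\mapsto x^{2m}$, Young's inequality (you will need the general $|x|^{2m-1}|y|\leq\tfrac{2m-1}{2m}|x|^{2m}+\tfrac{1}{2m}|y|^{2m}$ form for $m=2$, not just the quoted $2xy$ case, but that is routine), the substitution $u=qs$ to fold the delayed integral into the undelayed one, Gronwall, and Fatou. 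Your observation that $qs\leq s\leq\tau_n$ keeps the delayed argument inside the stopped horizon is exactly the point that makes the proportional delay harmless, and your pointwise bound $\mathbb{E}[X^{2m}(t)]\leq\mathbb{E}[X_0^{2m}]e^{K(1+1/q)t}$ suffices for both parts via Tonelli, so you never need the supremum moments the paper invokes. What your version buys is self-containment and an explicit exponential moment estimate (which in fact foreshadows the comparison arguments used later in the paper); what the paper's version buys is brevity, plus the stronger $\sup$-moment and H\"older-in-$L^2$ information, at the price of leaning on an external reference whose fourth-moment case is only asserted to follow by ``modifying the argument.'' One cosmetic caveat: $\psi_n(t)$ is bounded by $n^{2m}+\mathbb{E}[X_0^{2m}]$ rather than $n^{2m}$ (the stopping time may be zero on $\{|X_0|\geq n\}$), but this does not affect the applicability of Gronwall.
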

\begin{proof}
In \cite{BakerBuck2} it is shown, that $\mathbb{E}(|X(t) - X(s)|^2)
\leq C(T) \cdot(t-s)$ for any $0\leq s \leq t \leq T$ with $t-s <
1$. Part (i) for $m=1$ now follows from the inequalities:
\begin{eqnarray*}
\left|\mathbb{E}(|X(t)|) - \mathbb{E}(|X(s)|)\right| &\leq&
\mathbb{E}\left| |X(t)|-|X(s)| \right|
\leq \mathbb{E}(\left|X(t) -  X(s)\right|)\\
&\leq& \left(\mathbb{E}(X(t) -  X(s))^2\right)^{\frac{1}{2}}.
\end{eqnarray*}
For Part (i) for $m=2$ we have:
\begin{eqnarray*}
\Big| \mathbb{E}(X^2(t)) - \mathbb{E}(X^2(s)) \Big|
 &=& \Big|\mathbb{E}(X^2(t) - X^2(s))\Big| \\
 &\leq& \mathbb{E}[\left|X^2(t) -  X^2(s) \right|]\\
 &\leq& \mathbb{E}[\left|(X(t) -  X(s))(X(t) +  X(s)) \right|]\\
 &\leq& \mathbb{E}\left| X(t) -  X(s)\right|^2 \
\mathbb{E} \left|X(t) +  X(s)\right|^2.
\end{eqnarray*}
Now, $t \mapsto\mathbb{E}\Big(\sup_{0\leq s\leq t}
|X(s)|^{2}\Big)$ is bounded on compact sets, \cite[Theorem
2.3]{BakerBuck2}, so applying this proves (i) for $m=2$.

To establish (ii), note that, for $m=1$, it follows immediately
from the last statement. Modifying the argument of \cite[Theorem
2.3]{BakerBuck2} establishes that the function
$t\mapsto\mathbb{E}\Big(\sup_{0\leq s\leq t} |X(s)|^{4}\Big)$ is
bounded, which proves (ii) for $m=2$.
\end{proof}

\begin{flushleft}
\textbf{Acknowledgements}
\end{flushleft}
The authors were supported by a SQuaRE activity entitled ``Stochastic stabilisation of limit-cycle dynamics in ecology and neuroscience'' funded by the American Institute of Mathematics.

\end{document}